\g@addto@macro\th@plain{\thm@headpunct{}}
\numberwithin{equation}{section}
\theoremstyle{plain}
\newtheorem{thm}{Theorem}[section]
\newtheorem{lemma}{Lemma}[section]
\newtheorem{remark}{Remark}[section]
\newtheorem{example}{Example}[section]
\def \E {{\mathbb E}}
\def \P {{\mathbb P}}
\def \RR {{\mathbb R}}
\def \eps {\varepsilon}
\def \lf {\underline{f}}
\def \lk {\underline{k}}
\def \uf {\overline{f}}
\def \uk {\overline{k}}
\newcommand{\dd}{\mathrm{d}}
\title[On perpetuities with light tails]{On perpetuities with light tails}
\author[B. Ko\l{}odziejek]{Bartosz Ko\l{}odziejek}
\address{Faculty of Mathematics and Information Science\\Warsaw University of Technology\\Koszykowa 75\\00-662 Warsaw, Poland}
\email{b.kolodziejek@mini.pw.edu.pl}
\keywords{perpetuity; dependence structure; regular variation; {T}auberian theorems; convex conjugate.}
\subjclass[2010]{Primary 60H25; Secondary 60E99}
\begin{document}
\begin{abstract}
In the paper we consider the asymptotics of logarithmic tails of a perpetuity 
$$R \stackrel{d}{=}\sum_{j=1}^\infty Q_j \prod_{k=1}^{j-1}M_k,\qquad(M_n,Q_n)_{n=1}^\infty \mbox{ are i.i.d. copies of }(M,Q),$$ 
in the case when $\P(M\in[0,1))=1$ and $Q$ has all exponential moments. 
If $M$ and $Q$ are independent, under regular variation assumptions, we find the precise asymptotics of $-\log\P(R>x)$ as $x\to\infty$. Moreover, we deal with the case of dependent $M$ and $Q$ and give asymptotic bounds for $-\log\P(R>x)$. It turns out that dependence structure between $M$ and $Q$ has a significant impact on the asymptotic rate of logarithmic tails of $R$. Such phenomenon is not observed in the case of heavy-tailed perpetuities.
\end{abstract}

\maketitle

\section{Introduction}
In the present paper, we consider a random variable $R$ defined as a solution of the affine stochastic equation
\begin{align}\label{defperp} 
R \stackrel{d} {=}MR +Q\qquad R\mbox{ and
}(M,Q)\mbox{ independent.}
\end{align}
Under suitable assumptions (see \eqref{cond} below) on $(M,Q)$, one can
think of $R$ as a limit in distribution of the following iterative scheme:
\begin{align}
\label{it} R_n = M_nR_{n-1}+Q_n,\qquad
n \geq1,
\end{align}
where $(M_n,Q_n)_{n\geq 1}$ are i.i.d. copies of $(M,Q)$ and $R_0$ is arbitrary and independent of $(M_n,Q_n)_{n\geq 1}$. Writing out the
above recurrence and renumbering the random variables $(M_n,Q_n)$, we see that $R$ may also be defined by
\begin{align}\label{perpd} 
R \stackrel{d}{=}\sum_{j=1}^\infty Q_j \prod_{k=1}^{j-1}M_k,
\end{align}
provided that the series above converges in distribution. 
For a detailed discussion of sufficient and necessary conditions in one-dimensional
case, we refer to \citet{ver79} and \citet
{GM00}; here we only note that conditions
\begin{align}
\label{cond} {\mathbb E}\log^+ |Q| < \infty\quad\mbox{and}\quad
{\mathbb E}\log|M| < 0
\end{align}
suffice for the almost sure convergence of the series in \eqref{perpd} and for uniqueness of a solution to \eqref{defperp}. For a systematic approach to the probabilistic properties of the fixed point equation \eqref{defperp} and much more we recommend two recent books \cite{BurBook16} and \cite{IksBook16}.

When $R$ is the solution of \eqref{defperp}, then following a custom
from insurance mathematics, we call $R$ a \emph{perpetuity}. In this
scheme,  let $Q$ represent a random payment and $M$ a random discount factor. Then $R$ is the present value of
a commitment to pay the value of $Q$ every year in the future; see \eqref{perpd}. Such a stochastic equation appears in many areas
of applied mathematics; for a broad list of references consult, for
example, \cite{DF99} and \cite{ver79}.
If $(R,M,Q)$ satisfy \eqref{defperp} we will say that perpetuity $R$ is \emph{generated} by $(M,Q)$ and that random vector $(M,Q)$ is the \emph{generator} of $R$.

For the sake of simplicity, we consider only the case when 
\begin{align}\label{MQpos}
\P(M\geq0,Q\geq0)=1
\end{align}
which implies $\P(R\geq0)=1$.

The main focus of research on perpetuities is their tail behavior. 
Assume for a moment that $Q=1$ a.s. Then, {for $x\geq 1$}, on the set 
$$\left\{M_1>1-\frac{1}{x},\ldots,M_{\left\lfloor x\right\rfloor}>1-\frac{1}{x}\right\}$$
we have 
$$R\geq\sum_{k=1}^{\left\lfloor x\right\rfloor+1}M_1\cdot\ldots\cdot M_{k-1}\geq \sum_{k=1}^{\left\lfloor x\right\rfloor+1}(1-1/x)^{k-1}> (1-e^{-1})x,$$
which gives a lower bound for the tails $\P(R>(1-e^{-1})x)$ of the form 
$$ \P\left(M_1>1-\frac{1}{x},\ldots,M_{\left\lfloor x\right\rfloor}>1-\frac{1}{x}\right)=\P\left(M>1-\frac{1}{x}\right)^{\left\lfloor x\right\rfloor}.$$
It turns out that such approach, proposed in \cite{GG96}, gives the appropriate logarithmic asymptotics for constant $Q$; in \cite{BK17} (with earlier contribution by \citet{HitWes09})
it is proven that under some weak assumptions on the distribution of $M$ near $1-$, one has
\begin{align}\label{TRTRT}
\log\P(R>x)\sim c\,x\log\P\left(M>1-\frac{1}{x}\right)
\end{align}
for an {explicitly} given positive constant $c$. As usual, we write $f(x)\sim g(x)$ if $\lim_{x\to\infty}f(x)/g(x) = 1$. 

The next step in \cite{GG96} was to consider non-constant $Q$. If $Q$ and $M$ are independent, and $M$ has a distribution equivalent at $1$ to uniform distribution, that is,
$$-\log\P\left(M>1-\frac{1}{x}\right)\sim \log x,$$ 
then (see \cite[Theorem 3.1]{GG96})
$$\lim_{x\to\infty}\frac{\log\P(R>x)}{x\log\P\left(M>1-1/x\right)}=\frac{1}{q_+},$$
where $q_+=\mathrm{ess}\sup Q\in(0,\infty]$.

Two natural questions then arise: 
\begin{enumerate}
\item what is the precise asymptotic if $q_+=\infty$?
\item what is the asymptotic if $M$ and $Q$ are not independent?
\end{enumerate}
This paper is devoted to answering both these questions in a unified manner.
We will be particularly interested in the asymptotic behaviour of $\log\P(R>x)$ as $x\to\infty$, which is closely related to the asymptotic behaviour of $\log M_R(t)$, where $M_R$ is the moment generating function of $R$. It is known that if $\P(M>1)>0$, then $R$ is necessarily heavy tailed.
In the present paper we are interested in the case when $\P(M\in[0,1])=1$ and when 
\begin{align}\label{QMGF}
M_Q(t)=\E e^{t Q}<\infty \qquad\mbox{ for all }t\in\RR.
\end{align}
In such case, $R$ is always light-tailed; by \cite{AIR09,BDIM}
$$M_R(t)=\E e^{t R}\mbox{ is finite on the set }(-\infty,t_0),$$
where $t_0:=\sup\{t\colon\E e^{t Q}I_{M=1} <1\}$, which is positive since $\P(M=1)<1$. 
If $t_0$ is finite, then by \cite[Lemma 5]{DG06},
\begin{align}\label{liminf}
\liminf_{x\to\infty} \frac{-\log\P(R>x)}{x}=\sup\{r>0\colon M_R(t)<\infty\}=t_0,
\end{align}
which means that this case is completely solved.
We have $t_0=\infty$ if and only if {either} $\P(M=1)=0$ or $\P_{Q|M=1}=\delta_0$, but the second case can be reduced to the first one. To see this, assume that $\P(M=1)>0$, $\P(Q=0|M=1)=1$ and define $N=\inf\{n\colon M_n<1\}$. It is easy to see that $N$ is a stopping time with respect to $\mathcal{F}_n:=\sigma\left( (M_k,Q_k)\colon\,k\leq n\right)$ and $\P(N<\infty)=1$. Then, the distribution of
$$\left(M_1\cdot\ldots\cdot M_{N-1},\sum_{k=1}^N M_1\ldots M_{k-1}Q_k\right)$$
is the same as the conditional distribution of $(M,Q)$ given $\{M<1\}$. Thus, if $(M^\prime,Q^\prime)\stackrel{d}{=}(M,Q)|M<1$, by \cite[Lemma 1.2]{ver79}, we have
$$R\stackrel{d}{=}M^\prime R+Q^\prime,\qquad R\mbox{ and
}(M^\prime, Q^\prime)\mbox{ independent}$$
and we have $\P(M^\prime=1)=0$.
Therefore, to exclude the case of finite $t_0$ we assume
\begin{align}\label{Ml1}
\P(M\in[0,1))=1.
\end{align}

Observe that the case when $M\leq m_+<1$ and $Q\leq q_+<\infty$ a.s. is uninteresting for us, since then $R$ has no tail (actually, $R\leq q_+/(1-m_+)$ a.s.). We will always exclude this case by assuming that 
\begin{align}\label{MQnb}
\frac{Q}{1-M}\qquad\mbox{is not bounded}.
\end{align}

We note here that the structure of dependence between $M$ and $Q$ does not have a significant impact on the tails of heavy-tailed perpetuities. If
\begin{align}\label{deg}
\P(r=Mr+Q)<1\qquad\mbox{ $r\in\mathbb{R}$}
\end{align}
then, in the cases considered in \cite{Gol91, Gre94, Dys15, DK17, Kev16}, the rate of asymptotics of $\P(R>x)$ is not influenced by the dependence structure of $(M,Q)$ (with possible exception in the very special unsolved case of \cite{DK17} if $\E M^\alpha Q^{\alpha-\eta}=\infty$ for all $\eta\in(0,\alpha)$). The problem becomes more complicated if $(M,Q)$ have lighter tails, that is if the moment generating function of $R$ exists in a neighbourhood of $0$ (but not in $\RR$), but still there is a relatively high insensitivity to the dependence structure of the tail of $R$ for given marginals (this is because in such case $Q$ dominates $M$); see e.g. \cite[Theorem 1.3]{BDIM} and \eqref{liminf}. If the moment generating function of $R$ is finite on the whole $\RR$, we will see that the dependence structure may have significant impact on the rate of convergence even for logarithmic tails, what can be observed in the following example (see also Example \ref{EXQQQ}):
\begin{example}
Consider $(M,Q)=(U,U)$ and $(M',Q')=(U,1-U)$, where $U$ is uniformly distributed on $[0,1]$ {(note that \eqref{MQnb} and \eqref{deg} are not satisfied here)}.  Let
$R$ and $R'$ be the perpetuities generated by $(M,Q)$ and  $(M',Q')$, respectively.
We have
$$-\log\P(R>x)\sim x\log x,$$
while
$\P(R'=1)=1$. 
To see the first result, observe that $\tilde{R}=R+1$ satisfies 
$$\tilde{R}\stackrel{d}{=}U\tilde{R}+1,\qquad \tilde{R}\mbox{ and }U\mbox{ are independent},$$
thus the results of \cite{GG96} and \cite{BK17} apply.
In this very example, {asymptotics of $\P(R>x)$ as $x\to\infty$} is also known \cite{ver72}.
\end{example}
Finally, we would like to mention here \cite{TY16}, where the authors considered generators fulfilling a certain dependence structure which somehow resembles the notion of asymptotic independence from \cite{MRR02}. A similar and significantly weaker, but still restrictive condition was considered in \cite[Eq. (5)]{BDIM}. Here we will be able to give bounds for the logarithmic tails even if large values of $M$ exclude large values of $Q$ (and vice versa), which is in opposition to the asymptotic independence. 

The paper is organized as follows. 
\begin{description}
\item[Section \ref{secPreli}: Preliminaries] In the next section we give a short introduction to the theories that will be extensively exploited, that is, regular variation, convex analysis, {T}auberian theorems and concepts of dependence. 
\item[Section \ref{Sind}: Independent $M$ and $Q$] 
We find precise asymptotics of logarithmic tail of $R$ when $M$ and $Q$ are independent and $Q$ is unbounded (Theorem \ref{thmIND1}), and bounded (Theorem \ref{thmIND2}). Particularly, we assume that
\begin{align*}
x&\mapsto-\log\P\left(\frac{1}{1-M}>x\right)\in\mathcal{R}_{r-1},\quad r>1,
\intertext{and}
x&\mapsto-\log\P(Q>x)\in\mathcal{R}_{\alpha},\quad \alpha>1,\qquad \mbox{or}\qquad \P(Q\leq q_+)=1,
\end{align*}
where $\mathcal{R}_\gamma$ denotes the class of regularly varying functions with index $\gamma$. Under these assumptions \eqref{QMGF}, \eqref{Ml1} and \eqref{MQnb} are satisfied.

We show that
$$-\log\P(R>x)\sim c\,h(x),$$
where constant $c>0$ is given explicitly and
\begin{align}\label{defh}
h(x):=\inf_{t\geq1}\left\{ -t\log\P\left(\frac{1}{1-M}>t, Q>\frac xt\right)\right\}.
\end{align}
Observe that if $Q=1$ a.s., then $h(x)=-x\log\P(M>1-1/x)$, so we recover \eqref{TRTRT}.
Thus, we generalize the results of \cite{GG96} and \cite{BK17}, but with new proofs, which are very different from the ones in \cite{GG96,BK17}. Our proofs are based on a new formulation of the classical {T}auberian theorems; see Section \ref{TAUB}.

The appearance of function $h$ is probably the most interesting phenomenon here. {It should be noted here that the function $h$ (in the simple form when $Q$ is degenerate) in the two sided bounds for $\log\P(R>x)$ appeared for the first time in \cite{Hit10}.}
\item[Section \ref{Sh}: Properties of function $h$] This {s}ection is devoted to explaining some informal heuristics, which show that the function $h$ is a natural candidate for describing the asymptotic of $-\log\P(R>x)$ when $M$ and $Q$ are not independent. 
In Theorem \eqref{propH} we give basic properties of the function $h$.

\item[Section \ref{secLower}: Lower bound] In this {s}ection we find lower bound for the quotient of $\log\P(R>x)/h(x)$ as $x\to\infty$. To present our argument shortly, let us consider scalar sequences $(\delta_k)_{k\geq 1}$ and $(q_k)_{k\geq1}$ such that
\begin{align}\label{strcnd}
x{\leq}\sum_{k=1}^n(1-\delta_1)\cdot\ldots\cdot(1-\delta_{k-1}) q_k.
\end{align}
Then, \eqref{perpd} implies
$$\log\P(R>x)\geq \log\P\left(\{R>x\}{\cap_{k=1}^n} \{ M_k>1-\delta_k, Q_k>q_k\}\right) = \sum_{k=1}^n\log\P(M>1-\delta_k,Q>q_k)$$
and so
$$
\liminf_{x\to\infty}
\frac{\log\P(R>x)}{h(x)} \geq
\liminf_{x\to\infty} \sup_{(\delta_k,q_k)_k, n}\left\{ \sum_{k=1}^n \frac{\log\P(M>1-\delta_k,Q>q_k)}{h(x)}\,\colon\,\, \mbox{\eqref{strcnd} holds}\right\},
$$
{where the supremum is taken over all possible choices of $n\in\mathbb{N}$ and sequences $(\delta_k)_{k=1}^n$, $(q_k)_{k=1}^n$ for which \eqref{strcnd} holds.}
In Theorem \ref{lower} we were able to calculate the right hand side of the above inequality under some regularity assumptions on $h$.
\item[Section \ref{secUpper}: Upper bound]
We show that if $R$ is generated by $(M,Q)$ with an arbitrary dependence structure, then
$$\limsup_{x\to\infty} \frac{\log\P(R>x)}{h_{co}(x)}\leq \lim_{x\to\infty} \frac{\log\P(R_{co}>x)}{h_{co}(x)}=-c,$$ 
where $R_{co}$ is a perpetuity generated by the so-called comonotonic $(M,Q)$ (see Section \ref{secDep}) and $h_{co}$ is the corresponding function $h$. Constant $c$ is given explicitly (see Theorem \ref{THco}). 
In this section we give stronger results under additional assumptions that the vector $(M,Q)$ is positively or negatively quadrant dependent (Theorem \ref{NQD}).
\item[Section \ref{secProof}: Proofs] The last section contains proofs of some results from preceding {s}ections.
\end{description}

\section{Preliminaries}\label{secPreli}
\subsection{Regular variation}
In this section we give a brief introduction to the theory of regular variation. For further details we refer to Bingham et al. \citep{BGT89}.

A positive measurable function $L$ defined in a neighborhood of $+\infty$ is said to be \emph{slowly varying} if 
\begin{align}\label{reg}
\lim_{x\to\infty}\frac{L(t x)}{L(x)}=1,\qquad\mbox{for all }t>0.
\end{align}
A positive measurable function $f$ defined in a neighborhood of $+\infty$ is said to be \emph{regularly varying} with \emph{index} $\rho\in\RR$ if $f(x)=x^\rho L(x)$ with $L$ slowly varying. We denote the class of regularly varying function with index $\rho$ by $\mathcal{R}_\rho$, so that, $\mathcal{R}_0$ is the class of slowly varying functions.

%The convergence in \eqref{reg} is locally uniform (\cite[Theorem 1.2.1]{BGT89}).
%It is easy to see that if $f_i\in R_{\rho_i}$ and $\rho_2>0$, then $f_1\circ f_2\in R_{\rho_1\rho_2}$ and $x\mapsto f_1(x)f_2(x)\in\mathcal{R}_{\rho_1+\rho_2}$.

We say that a positive function $f$ \emph{varies smoothly with index $\rho$} ($f\in \mathcal{SR}_\rho$), if $f\in C^\infty$ and for all $n\in\mathbb{N}$,
\begin{align}\label{SR}
\lim_{x\to\infty}\frac{x^n f^{(n)}(x)}{f(x)}=\rho(\rho-1)\ldots(\rho-n+1).
\end{align}
It is clear that $\mathcal{SR}_\rho\subset \mathcal{R}_\rho$. Moreover, if $f\in \mathcal{SR}_\rho$ then $x^2 f^{\prime\prime}(x)/f(x)\to \rho(\rho-1)$, hence $f$ is ultimately strictly convex if $\rho>1$; \emph{ultimately} here and later means ``{on the vicinity of infinity}''. Further, if $f\in \mathcal{SR}_\rho$ with $\rho>0$, then on the neighbourhood of infinity $f$ has an inverse in $\mathcal{SR}_{1/\rho}$ (\cite[Theorem 1.8.5]{BGT89}).
For any $f\in \mathcal{R}_\rho$ there exist $\lf, \uf\in \mathcal{SR}_\rho$ with $\lf(x)\sim \uf(x)$ and $\lf\leq f\leq \uf$ on a neighbourhood of infinity (the Smooth Variation Theorem \cite[Theorem 1.8.2]{BGT89}).

If $f\in\mathcal{SR}_\gamma$ with $\gamma>0$, then
\begin{align}\label{Rg}
\lim_{x\to\infty}\frac{f\left(x+u\frac{f(x)}{f^\prime(x)}\right)}{f(x)}= \left(1+\frac{u}{\gamma}\right)^\gamma.
\end{align}
This follows by the fact that convergence in \eqref{reg} and \eqref{SR} is locally uniform, see \cite[Theorem 1.2.1]{BGT89}.
We say that a measurable function $f$ is \emph{rapidly varying} ($f\in \mathcal{R}_\infty$) if 
$$\lim_{x\to\infty} \frac{f(t x)}{f(x)}=\infty,\qquad\mbox{for all $t>1$}.$$
It is the subclass of $\mathcal{R}_\infty$ that we are interested in. The class $\Gamma$ consists of nondecreasing and right-continuous
functions $f$ for which there exists a measurable function $g\colon
{\mathbb R}\to(0,\infty)$ such that (see \cite{BGT89}, Section~3.10)
\begin{align}\label{Gamma} 
\lim_{x\to\infty}\frac{f (x+ug(x) )}{f(x)}= e^u,\qquad \mbox{for all $u\in\mathbb{R}$.}
\end{align}
Function $g$ in \eqref{Gamma} is called \emph{an auxiliary function}
and if $f$ has nondecreasing positive derivative, then one may take
$g=f/f^\prime$ (compare with \eqref{Rg}). 
It can be shown that if $f\in\Gamma$ and $t>1$, then
$\lim_{x\to\infty} f(t x)/f(x)=\infty$, thus $\Gamma\subset \mathcal{R}_\infty$.

The class $\Gamma$ is very rich: If $f_1\in \mathcal{R}_\rho$, $\rho>0$ and
$f_2\in\Gamma$, then $f_1\circ f_2\in\Gamma$ (\cite{BGT89},
Proposition~3.10.12). The same holds if $f_1\in\Gamma$ and $f_2'\in
\mathcal{R}_\rho$ with $\rho>-1$ or if $f_1, f_2^\prime\in\Gamma$ (\cite{BGT89},
page~191). %An easy example of a function from $R(\infty
%)\setminus\Gamma$ is $f(x)=\exp(x-\cos x)$.

Finally, we note that convergence in \eqref{Rg} is uniform on compact subsets of $(-\gamma,\infty)$ and
that convergence in \eqref{Gamma} is uniform on compact subsets of $\RR$ (see \cite[Proposition~3.10.2]{BGT89}). 

\subsection{Convex conjugate}\label{CC}

For a function $f\colon (0,\infty)\to\RR$ we define its \emph{convex conjugate} (or the Fenchel-Legendre transform) by
\begin{align}\label{conv}
f^\ast(x)=\sup\{xz-f(z)\colon z>0\}.
\end{align}
It is standard that $f^\ast$ is convex, non-decreasing and lower semi-continuous. Moreover, if $f$ is convex and lower semi-continuous then $(f^{\ast})^\ast=f$ (\citep{Roc70}). 
Convex-conjugacy is order-reversing, that is, if $f\leq g$, then $f^\ast\geq g^\ast$.
%By the very definition $f^\ast$ is convex and $f(x)+f^\ast(z)\geq xz$ for any $x,z>0$ (Young's inequality). 

If $f$ is differentiable and strictly convex, then the supremum \eqref{conv} is attained at $z=(f^\prime)^{-1}(x)$ and thus 
$f^\ast(x)=x(f^\prime)^{-1}(x)-f((f^\prime)^{-1}(x)).$
Moreover, $f^\prime\circ (f^\ast)^\prime=(f^\ast)^\prime\circ f^\prime=\mathrm{Id}$ and so
\begin{align}\label{eqstar}
f^\ast(x)=x(f^\ast)^\prime(x)-f\left((f^\ast)^\prime(x)\right).
\end{align}

We will be interested in the relation between $f$ and $f^\ast$ when $f$ is regularly varying. We say that $\alpha$ and $\beta$ are \emph{conjugate numbers }if $\alpha,\beta>1$ and 
\mbox{$\alpha^{-1}+\beta^{-1}=1$}. 
Let $L$ be a slowly varying function. 
Then (\cite[Theorem 1.8.10, Corollary 1.8.11]{BGT89})
$$f(x)\sim \frac{1}{\alpha} x^\alpha L(x^\alpha)^{1/\beta}\in \mathcal{R}_\alpha$$
if and only if
$$f^\ast(z)\sim \frac{1}{\beta} z^\beta L^\#(z^\beta)^{1/\alpha}\in \mathcal{R}_\beta,$$
where $L^\#$ is a dual, unique up to asymptotic equivalence, slowly varying function with
$$L(x)L^\#\left(x L(x)\right)\to 1,\qquad L^\#(x)L\left(xL^\#(x)\right)\to 1,\qquad \mbox{ as }x\to\infty.$$

By the very definition of $f^\ast$ we obtain Young's inequality 
$$f(s)+f^\ast(t)\geq st\qquad\mbox{for all $s,t>0$}.$$
If $f$ and $f^\ast$ are invertible, then taking $s=f^{-1}(x)$ and $t=(f^\ast)^{-1}(x)$ for $x>0$, we have 
$$\frac{(f^\ast)^{-1}(x)f^{-1}(x)}{x}\leq 2.$$
We will show that left hand side above has a limit as $x\to\infty$.
If $f\in\mathcal{R}_\rho$ with $\rho>0$, then there exists a function $g$, such that $f(g(x))\sim g(f(x))\sim x$. Such $g$ is unique up to asymptotic equivalence {(see \cite[Theorem 1.5.12]{BGT89})} and is called \emph{asymptotic inverse} of $f$. If $f$ is locally bounded on $(0,\infty)$, then one can take $g=f^{\leftarrow}$, where
$$f^{\leftarrow}(x)=\inf\{y\in(0,\infty)\colon f(y)>x\}.$$
\begin{lemma}\label{kinv}
	Let $f\in \mathcal{R}_\alpha$ with $\alpha>1$ and let $\beta$ be a conjugate number to $\alpha$. Then
	$$\frac{f^{\leftarrow}(x)(f^\ast)^{\leftarrow}(x)}{x}\to \alpha(\beta-1)^{1/\beta}\mbox{ as }x\to\infty.$$
\end{lemma}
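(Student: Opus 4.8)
The plan is to reduce to the case where $f$ is smoothly varying, and then run a short computation based on the Fenchel duality identity \eqref{eqstar}. First I would observe that both $f^{\leftarrow}$ and $(f^\ast)^{\leftarrow}$ depend only on the asymptotic equivalence class of $f$: for $f^{\leftarrow}$ this is immediate from the uniqueness (up to $\sim$) of the asymptotic inverse, and for $(f^\ast)^{\leftarrow}$ it follows from \cite[Corollary 1.8.11]{BGT89} quoted above, since the asymptotic class of $f^\ast$ is encoded by the dual slowly varying function $L^\#$, which is determined up to $\sim$ by the asymptotic class of $f$. Hence, by the Smooth Variation Theorem \cite[Theorem 1.8.2]{BGT89}, we may assume $f\in\mathcal{SR}_\alpha$; since $\alpha>1$ this means that near $+\infty$ the function $f$ is $C^\infty$, strictly increasing to $\infty$ and strictly convex, so there $f^{\leftarrow}$ is the ordinary inverse, $f^\ast$ is differentiable with $(f^\ast)^\prime=(f^\prime)^{-1}$, and \eqref{eqstar} holds (this suffices because for $\alpha>1$ the value $f^\ast(z)$ for large $z$ is attained at an argument tending to $\infty$, hence is determined by the germ of $f$ at $+\infty$). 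Finally, recall the elementary identities $\beta-1=\tfrac1{\alpha-1}$ and $\alpha(\beta-1)^{1/\beta}=\alpha(\alpha-1)^{-1/\beta}=\alpha(\alpha-1)^{1/\alpha-1}$, so it is enough to prove convergence to this last quantity.

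Now fix $x$ large, put $y:=f(x)$, so that $f^{\leftarrow}(y)=x$, and let $z:=(f^\ast)^{\leftarrow}(y)$, i.e. $f^\ast(z)=y=f(x)$. Write $u:=(f^\ast)^\prime(z)$; then $z=f^\prime(u)$ because $f^\prime\circ(f^\ast)^\prime=\mathrm{Id}$, and \eqref{eqstar} gives $f^\ast(z)=zu-f(u)=uf^\prime(u)-f(u)$. Since $f\in\mathcal{SR}_\alpha$ yields $uf^\prime(u)/f(u)\to\alpha$, and since $f^\ast(z)=f(x)\to\infty$ forces $u\to\infty$, we get $uf^\prime(u)-f(u)\sim(\alpha-1)f(u)$, hence $f(u)\sim f(x)/(\alpha-1)$. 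Because $f\in\mathcal{R}_\alpha$ is increasing, equivalently $f^{\leftarrow}\in\mathcal{R}_{1/\alpha}$, applying $f^{\leftarrow}$ to this asymptotic relation gives $u/x\to(\alpha-1)^{-1/\alpha}$.

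It then remains to assemble the pieces: using $f^{\leftarrow}(y)=x$, $(f^\ast)^{\leftarrow}(y)=z=f^\prime(u)\sim\alpha f(u)/u$ and $f(x)=y$,
\[
\frac{f^{\leftarrow}(y)\,(f^\ast)^{\leftarrow}(y)}{y}=\frac{x\,f^\prime(u)}{f(x)}\sim \alpha\cdot\frac{x}{u}\cdot\frac{f(u)}{f(x)}\longrightarrow \alpha\cdot(\alpha-1)^{1/\alpha}\cdot\frac{1}{\alpha-1}=\alpha(\alpha-1)^{1/\alpha-1},
\]
and since $y=f(x)$ sweeps a neighbourhood of $+\infty$ as $x\to\infty$, this is exactly the assertion. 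I expect the only delicate points to be the reduction in the first paragraph — namely that passing to a smoothly varying representative of $f$ affects neither $f^{\leftarrow}$ nor $(f^\ast)^{\leftarrow}$ asymptotically — and the careful use of the ``converse'' of regular variation in the step $f(u)\sim f(x)/(\alpha-1)\Rightarrow u/x\to(\alpha-1)^{-1/\alpha}$; once $f$ is smooth and strictly convex, the core computation is routine.
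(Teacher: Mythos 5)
Your proof is correct and takes essentially the same route as the paper: reduce to $f\in\mathcal{SR}_\alpha$ via the Smooth Variation Theorem, then exploit \eqref{eqstar} together with the $\mathcal{SR}$-limit $uf^\prime(u)/f(u)\to\alpha$. The only cosmetic differences are that (i) you justify the reduction by invoking asymptotic invariance of $f^{\leftarrow}$ and (via Corollary 1.8.11) of $f^\ast$, whereas the paper sandwiches $\lf\leq f\leq\uf$ and uses order-reversal of conjugation, and (ii) you run the duality identity on the $f$-side (obtaining $f^\ast(z)\sim(\alpha-1)f(u)$) rather than the $f^\ast$-side as the paper does, which lets you avoid the appeal to \cite[Lemma 2.1]{BK17}; both are routine once the smooth reduction is in place.
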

The proof is postponed to the last {s}ection.

The following theorem will be important for us. For a formulation in $\RR^n$ see \cite[Theorem 2.5.1]{HU93}.
\begin{thm}\label{HU}
Assume that functions $a$ and $b$ are lower semi-continuous and convex on $(0,\infty)$. If $a$ is additionally non-decreasing, then for $x>0$ one has
$$(a\circ b)^\ast(x)=\inf_{z>0}\left\{a^\ast(z)+z\,b^\ast\left(\frac xz\right)\right\}.$$
\end{thm}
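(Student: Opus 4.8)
The plan is to derive the formula for $(a\circ b)^\ast$ directly from the definitions, using the key fact that a lower semi-continuous convex non-decreasing function equals its biconjugate. First I would write, for $x>0$,
$$(a\circ b)^\ast(x)=\sup_{y>0}\left\{xy-a(b(y))\right\}.$$
The idea is to ``open up'' the inner composition $a(b(y))$ by expressing $a$ itself through $a^{\ast\ast}$. Since $a$ is lower semi-continuous and convex on $(0,\infty)$, we have $a=(a^\ast)^\ast$, so
$$a(b(y))=\sup_{z>0}\left\{z\,b(y)-a^\ast(z)\right\}.$$
Here the hypothesis that $a$ is non-decreasing enters: it guarantees that $a^\ast(z)=+\infty$ for $z<0$, so the supremum in the biconjugate may be taken over $z>0$ only (the boundary behaviour at $z=0$ should be checked separately but causes no trouble since $z\,b(y)-a^\ast(z)$ is handled by a limiting argument, or one allows $z\ge 0$ throughout).

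Next I would substitute this into the expression for $(a\circ b)^\ast(x)$ and interchange the two suprema:
$$(a\circ b)^\ast(x)=\sup_{y>0}\sup_{z>0}\left\{xy-z\,b(y)+a^\ast(z)\right\}\cdot(-1)?$$
— more carefully, $-a(b(y))=\inf_{z>0}\left\{a^\ast(z)-z\,b(y)\right\}$, so
$$(a\circ b)^\ast(x)=\sup_{y>0}\inf_{z>0}\left\{xy+a^\ast(z)-z\,b(y)\right\}.$$
The crux is to exchange $\sup_y$ and $\inf_z$. Rather than invoking a general minimax theorem (which would require compactness or concavity/convexity conditions that may fail here), I would instead fix $z>0$, pull $a^\ast(z)$ out of the $y$-supremum, and recognize
$$\sup_{y>0}\left\{xy-z\,b(y)\right\}=z\sup_{y>0}\left\{\tfrac xz\,y-b(y)\right\}=z\,b^\ast\!\left(\tfrac xz\right),$$
using $z>0$ to rescale. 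This gives the easy inequality $(a\circ b)^\ast(x)\le\inf_{z>0}\{a^\ast(z)+z\,b^\ast(x/z)\}$ only after one direction of the exchange; for the reverse, one argues that for each $z>0$ and each $y>0$, $a(b(y))\ge z\,b(y)-a^\ast(z)$, hence $xy-a(b(y))\le xy-z\,b(y)+a^\ast(z)$, and taking $\sup_y$ then $\inf_z$ yields $(a\circ b)^\ast(x)\le\inf_z\{a^\ast(z)+z\,b^\ast(x/z)\}$. For the matching lower bound, one uses that equality $a(w)=\sup_z\{zw-a^\ast(z)\}$ is attained (or approached) at some $z=z(w)$ in the subdifferential $\partial a(w)$, so choosing $w=b(y)$ and the corresponding $z$ makes the inequality tight in the limit.

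The main obstacle I anticipate is handling the edge cases cleanly: the supremum/infimum may be attained only in a limiting sense or at the boundary $z\to 0^+$ or $z\to\infty$, and one must ensure the domains where $a$, $b$, $a^\ast$, $b^\ast$ are finite line up so that no $\infty-\infty$ arises. In particular one should check that if $b(y)$ lies outside the effective domain of $a$ (so $a(b(y))=+\infty$), the identity still holds, and that the restriction to $z>0$ rather than $z\in\RR$ is justified precisely by $a$ being non-decreasing. Since the excerpt says ``For a formulation in $\RR^n$ see \cite[Theorem 2.5.1]{HU93}'', I would in fact simply cite that reference and note that the one-dimensional statement on $(0,\infty)$ follows by restriction, spelling out only the rescaling identity $\sup_{y>0}\{xy-z\,b(y)\}=z\,b^\ast(x/z)$ and the role of monotonicity of $a$, which are the only points not verbatim in the cited theorem.
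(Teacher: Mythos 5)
The paper does not prove Theorem~\ref{HU}; it simply cites \cite[Theorem 2.5.1]{HU93}. Your concluding plan to do the same, spelling out only the rescaling identity $\sup_{y>0}\{xy-z\,b(y)\}=z\,b^\ast(x/z)$ for $z>0$ and the observation that $a$ non-decreasing forces $a^\ast(z)=+\infty$ for $z<0$, therefore matches the paper's treatment exactly.

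Your exploratory direct argument does contain a real gap, though. Every step you carry out cleanly (substitute $a=a^{\ast\ast}$, extract $a^\ast(z)$, rescale, take $\inf_z$) is only the trivial half of minimax and yields $(a\circ b)^\ast(x)\le\inf_{z>0}\{a^\ast(z)+z\,b^\ast(x/z)\}$ --- indeed you derive this same $\le$ twice, once labelled ``easy'' and once labelled ``for the reverse''. The subdifferential hint as written does not close the gap: choosing $z\in\partial a(b(y))$ makes the inner Fenchel inequality tight, but the outer step $b^\ast(x/z)\ge (x/z)y-b(y)$ still pushes things in the $\le$ direction. To obtain $\ge$ one must instead start from a near-optimal $z_0$ in the infimum, take $y_0\in\partial b^\ast(x/z_0)$, and use stationarity of $z\mapsto a^\ast(z)+z\,b^\ast(x/z)$ together with \eqref{eqstar} to check that simultaneously $z_0\in\partial a(b(y_0))$, so that both Fenchel inequalities are tight at once; the attainment and effective-domain issues you flag are precisely what makes this delicate and what \cite[Theorem 2.5.1]{HU93} handles. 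A cleaner self-contained route, if you want one, is via biconjugation: set $c(x)=\inf_{z>0}\{a^\ast(z)+z\,b^\ast(x/z)\}$, compute $c^\ast=a\circ b$ by reversing the same rescaling (using $b=b^{\ast\ast}$ and that $a$ non-decreasing restricts the sup to $z>0$), hence $c^{\ast\ast}=(a\circ b)^\ast$, and conclude by verifying that $c$ is convex and lower semi-continuous so that $c=c^{\ast\ast}$; convexity is immediate since $(x,z)\mapsto a^\ast(z)+z\,b^\ast(x/z)$ is the sum of a convex function of $z$ and the perspective of $b^\ast$, hence jointly convex.
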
	 

\subsection{Dependence structure of random vectors}\label{secDep}
A function $f\colon \RR^2\to\RR$ is said to be \emph{supermodular} if
$$f(\min\{u,v\})+f(\max\{u,v\})\geq f(u)+f(v),\qquad\mbox{ for all }u,v\in \RR^2,$$
where the minimum and maximum are calculated component-wise.
If $f$ has continuous second order partial derivatives, then $f$ is supermodular if and only if $\frac{\partial^2 f}{\partial x\partial y}\geq 0$. One of the important examples of supermodular functions is $f(x_1,x_2)=g(x_1+x_2)$, when $g$ is convex. We will use this fact in the proof of Lemma \ref{LB} below.

{A random vector $(X,Y)$ is said to be smaller than a random vector $(X',Y')$ in the \emph{supermodular order}
if $\E f(X,Y)\leq \E f(X',Y')$ for all supermodular functions $f$ for which the expectations exist.}
The following theorem has many formulations with different assumptions (see {e.g.} \cite{Lor53, Tch80}), but we will use the one given by \citet{Cam76}.
\begin{thm}\label{Camb}
Let $f\colon \RR^2\to\RR$  be a continuous supermodular function. Let $(X,Y)$ and $(X',Y')$ be random vectors with the same marginal distributions. Assume that 
$$\P(X\leq x, Y\leq y)\leq \P(X'\leq x, Y'\leq y),\qquad\mbox{ for all }x,y\in\RR.$$
If the expectations $\E f(X,y_0)$ and $\E f(x_0,Y)$ are finite for some $x_0$ and $y_0$, then
$$\E f(X,Y)\leq \E f(X',Y')$$
provided that the above expectations exist (even if infinite valued).
\end{thm}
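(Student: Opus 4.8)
The statement is classical (it is \cite{Cam76}); the argument I would reconstruct is the Hoeffding-type one. A continuous supermodular $f$ has all its mixed second increments nonnegative,
$$f(s_2,t_2)-f(s_1,t_2)-f(s_2,t_1)+f(s_1,t_1)\ge 0\qquad(s_1<s_2,\ t_1<t_2),$$
so (continuity supplying the needed right-continuity) it induces a nonnegative Lebesgue--Stieltjes measure $\mu_f$ on $\RR^2$ assigning the above increment to the rectangle $(s_1,s_2]\times(t_1,t_2]$. Writing $H,H'$ for the joint distribution functions of $(X,Y)$ and $(X',Y')$, $F,G$ for the common marginals, and $\bar H(s,t)=\P(X>s,Y>t)=1-F(s)-G(t)+H(s,t)$, the target is the identity
$$\E f(X',Y')-\E f(X,Y)=\int_{\RR^2}\bigl(\bar H'(s,t)-\bar H(s,t)\bigr)\,\mu_f(\dd s,\dd t).$$
Once this is in hand we are done: $\bar H'-\bar H=H'-H\ge 0$ by the concordance hypothesis and $\mu_f\ge 0$, so the right-hand side is nonnegative.

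The first step is a harmless normalization. Subtracting from $f(s,t)$ its ``modular'' part $f(s,t_0)+f(s_0,t)-f(s_0,t_0)$ leaves $f$ supermodular and changes neither $\E f(X,Y)$ nor $\E f(X',Y')$, since $(X,Y)$ and $(X',Y')$ have the same marginals; this is exactly where finiteness of $\E f(X,y_0)$ and $\E f(x_0,Y)$ is used, to give the subtraction meaning. Hence we may assume $f(\cdot,t_0)\equiv 0$ and $f(s_0,\cdot)\equiv 0$, and then supermodularity forces $f\ge 0$ on $\{s\ge s_0,\,t\ge t_0\}\cup\{s\le s_0,\,t\le t_0\}$ and $f\le 0$ on the complementary two quadrants through $(s_0,t_0)$. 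For the single building block $f(s,t)=I_{s>a}I_{t>b}$ --- which is supermodular, its rectangle increments factoring as products of two nonnegative indicator increments --- the target identity is immediate, since $\E f(X,Y)=\bar H(a,b)$, $\E f(X',Y')=\bar H'(a,b)$, and $\bar H'(a,b)-\bar H(a,b)=H'(a,b)-H(a,b)$.

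Next I would establish the identity for truncated objects. After normalization, for $(s,t)$ in the quadrant $\{s\ge s_0,\,t\ge t_0\}$ one has the mixed-increment representation $f(s,t)=\int_{\RR^2} I_{s_0<u\le s}\,I_{t_0<v\le t}\,\mu_f(\dd u,\dd v)$, with the analogous formulas on the other three quadrants (endpoints reversed appropriately). Restricting $\mu_f$ to a bounded square $[-N,N]^2$ produces $f_N$ for which all expectations are finite, and expanding $I_{s_0<u\le s}=I_{u>s_0}-I_{u>s}$ in each variable rewrites $f_N$ as a finite $\mu_f$-superposition of building blocks $I_{u>\cdot}I_{v>\cdot}$ together with single-coordinate and constant terms. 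Integrating against the two laws by Fubini --- legitimate since everything here is finite --- and subtracting, the single-coordinate and constant terms cancel because the marginals agree, and the building-block identity propagates through the $\mu_f$-integral, giving the target identity with $f$ replaced by $f_N$.

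It remains to let $N\to\infty$, and here lies the real difficulty, which is also what ``even if infinite valued'' refers to. By the quadrant sign structure of the normalized $f$, the $f_N$ increase to $f$ on the quadrants where $f\ge 0$ and decrease to it where $f\le 0$, so monotone convergence applies to $\E f_N(X,Y)$, $\E f_N(X',Y')$ and to $\int(\bar H'-\bar H)\,\mu_f$ piece by piece; the hypotheses $\E f(X,y_0)<\infty$ and $\E f(x_0,Y)<\infty$ (inherited after normalization) keep the cancelling single-coordinate contributions finite and independent of $N$, so no $\infty-\infty$ arises in the bookkeeping. When at least one of $\E f^-(X,Y),\E f^-(X',Y')$ is finite this yields the identity in $[-\infty,+\infty]$, hence the inequality; the standing assumption that $\E f(X,Y)$ and $\E f(X',Y')$ exist rules out the one remaining degenerate configuration. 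The algebraic identity being essentially automatic once $\mu_f$ is available, this limiting and sign analysis is the step I expect to absorb most of the work.
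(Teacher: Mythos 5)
The paper does not prove Theorem~\ref{Camb}: it states it and cites Cambanis, Simons and Stout (reference \cite{Cam76}) as the source, so there is no in-text argument against which your proposal can be matched. Your reconstruction is the classical Hoeffding-identity route, which is indeed the one used in that reference (and in Tchen's closely related paper), and it does work: normalize away the modular part $f(\cdot,y_0)+f(x_0,\cdot)-f(x_0,y_0)$ using the integrability hypotheses; view the normalized $f$ as a $\mu_f$-superposition of the supermodular building blocks $I_{\cdot>u}\,I_{\cdot>v}$ on each of the four quadrants through $(x_0,y_0)$; push $\P(X'>u,Y'>v)\geq \P(X>u,Y>v)$ through a Fubini argument on the truncations $f_N$; and let $N\to\infty$.

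One sentence in your last paragraph is confused, though. After normalization the single-coordinate terms $f(\cdot,y_0)$ and $f(x_0,\cdot)$ are identically zero, so the finiteness hypotheses $\E f(X,y_0)<\infty$ and $\E f(x_0,Y)<\infty$ do not ``keep cancelling single-coordinate contributions finite and independent of $N$''; their only job is to justify the normalization in the first place (subtracting a finite constant from $\E f(X,Y)$ and $\E f(X',Y')$). What actually prevents an $\infty-\infty$ in the limit is the quadrant sign structure of the normalized $f$ together with the standing assumption that $\E f(X,Y)$ and $\E f(X',Y')$ exist: on each pair of opposite quadrants the $f_N$ converge monotonically, so $\E f_N^+$ and $\E f_N^-$ each increase to their limits, and existence of $\E f$ says one of those two limits is finite, giving $\E f_N\to\E f$ in $[-\infty,\infty]$ for each vector; the truncated inequality $\E f_N(X,Y)\leq \E f_N(X',Y')$ then passes to the limit and simultaneously rules out the a priori conceivable configuration $\E f(X,Y)=+\infty$, $\E f(X',Y')=-\infty$. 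This is what you are reaching for with ``the standing assumption rules out the one remaining degenerate configuration'', but the preceding sentence misattributes the mechanism. With that correction the argument is sound.
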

Assume that $X$ and $Y$ are random variables defined on the same probability space.
Let $F_X$ and $F_Y$ denote the cumulative distribution function (CDF) of $X$ and $Y$, respectively. Define $\underline{F}(x,y)=(F_X(x)+F_Y(y)-1)_+$ and $\overline{F}(x,y)=\min\{F_X(x),F_Y(y)\}$. It is clear that $\underline{F}$ and $\overline{F}$ are two dimensional cumulative distribution functions. 
Moreover, $\underline{F}$ and $\overline{F}$ have the same marginal distributions and for any $F$ with the same marginals one has (Fr\'{e}chet–-Hoeffding bounds)
$$\underline{F}\leq F\leq\overline{F}.$$
{If a random variable or vector $X$ has a CDF $F$, we will write $X\stackrel{d}{\sim}F$.}
We say that a vector $(X,Y){\stackrel{d}{\sim}} F$ is \emph{comonotonic} if $F=\overline{F}$ and that it is \emph{countermonotonic} if $F=\underline{F}$.
Thus, Theorem \ref{Camb} implies that comonotonic (countermonotonic) random vectors are maximal (minimal) with respect to the supermodular order.
For a CDF $F$ define for $x\in[0,1]$,
$$F^{-1}(x)=\inf\{y\in\mathbb{R}\colon F(y)\geq x\}.$$
It is known that if $U$ is uniformly distributed on $[0,1]$, then
$$(F_X^{-1}(U),F_Y^{-1}(U)){\stackrel{d}{\sim}} \overline{F}$$
and
$$(F_X^{-1}(U),F_Y^{-1}(1-U)){\stackrel{d}{\sim}} \underline{F}.$$

We say that the pair $(X,Y)$ is \emph{positively quadrant dependent} (\cite{L66, KS89}) if 
$$\P(X\leq x, Y\leq y)\geq\P(X\leq x)\P(Y\leq y)\qquad\mbox{for all $x,y\in\RR$}.$$
Similarly, $(X,Y)$ is \emph{negatively quadrant dependent} if above holds with the inequality sign reversed.
{We say that a function $f$ is weakly monotonic if it is non-decreasing or on-increasing.}
\begin{lemma}\label{LB}
	Assume \eqref{MQpos} and let $(M',Q')$ be a random vector such that
	$$\P(M\leq x, Q\leq y)\leq \P(M'\leq x, Q'\leq y),\qquad\mbox{ for all }x,y\in\RR$$
	with $M'\stackrel{d}{=}M$ and $Q'\stackrel{d}{=}Q$. 
	Let $R$ and $R'$ denote the perpetuities generated by $(M,Q)$ and $(M',Q')$, respectively.
	Then, 
	\begin{align}\label{convf}
		\E f(R)\leq\E f({R'})\qquad\mbox{for all convex and {weakly monotonic} functions $f$ on $\RR$},
	\end{align}
	provided that the above expectations exist (even if infinite valued).
\end{lemma}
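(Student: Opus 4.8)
\emph{Proof plan.}
The plan is to prove the stronger statement that $\E f(R_n)\le\E f(R_n')$ for every $n$, where $R_n:=\sum_{j=1}^n Q_j\prod_{k=1}^{j-1}M_k$ and $R_n'$ is its primed analogue, and then let $n\to\infty$. Two preliminary reductions will simplify matters. Since all summands are nonnegative, $R_n\uparrow R$ and $R_n'\uparrow R'$ a.s.; as convex functions on $\RR$ are continuous and $f(R_n)\ge f(0)$, monotone convergence reduces the lemma to the statement about partial sums. Moreover, given a convex nondecreasing $f$, the functions $f_k(x):=f(x\wedge k)+f'_-(k)(x-k)_+$ are again convex and nondecreasing, have at most linear growth, and increase to $f$ pointwise, so a second application of monotone convergence lets me assume $f$ has at most linear growth; in particular $\E f(R_n)<\infty$, since $\E R_n\le n\,\E Q<\infty$ and $\E Q<\infty$ by \eqref{QMGF}.

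Now I argue by induction on $n$, the case $n=0$ (where $R_0=R_0'=0$) being trivial. For the inductive step, peeling off the first pair of the sum gives $R_n\stackrel{d}{=}M\widehat R_{n-1}+Q$ with $\widehat R_{n-1}\stackrel{d}{=}R_{n-1}$ independent of $(M,Q)$, and analogously for the primed quantities, because the generators $(M,Q)$ and $(M',Q')$ produce i.i.d.\ sequences. Conditioning on $\widehat R_{n-1}$ yields $\E f(R_n)=\E\Phi(R_{n-1})$ and $\E f(R_n')=\E\Phi'(R_{n-1}')$ with
$$\Phi(r):=\E f(rM+Q),\qquad \Phi'(r):=\E f(rM'+Q'),\qquad r\ge 0.$$
I would then verify two facts. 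First, $\Phi(r)\le\Phi'(r)$ for every $r\ge 0$: the map $(m,q)\mapsto f(rm+q)$ is continuous and supermodular (it is $(x_1,x_2)\mapsto f(x_1+x_2)$, supermodular because $f$ is convex, composed with the coordinatewise nondecreasing map $(m,q)\mapsto(rm,q)$), and the finiteness hypotheses of Theorem~\ref{Camb} hold since $f(rM+q_0)\le f(r+q_0)$ (as $M\le 1$) and $\E f(Q)<\infty$ (linear growth of $f$ and $\E Q<\infty$); hence Theorem~\ref{Camb}, applied to the given ordering of $(M,Q)$ and $(M',Q')$, gives $\E f(rM+Q)\le\E f(rM'+Q')$. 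Second, $\Phi'$ is convex, nondecreasing and of at most linear growth on $[0,\infty)$, being an average of the convex nondecreasing maps $r\mapsto f(rm+q)$; extending it affinely to all of $\RR$ preserves these properties and does not alter its values on the support of $R_{n-1}$. Feeding $\Phi'$ into the inductive hypothesis then gives
$$\E f(R_n)=\E\Phi(R_{n-1})\le\E\Phi'(R_{n-1})\le\E\Phi'(R_{n-1}')=\E f(R_n'),$$
which closes the induction.

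The conceptual heart is this two-step comparison inside the inductive step: Theorem~\ref{Camb} converts the dependence within the outermost pair $(M,Q)$ into a one-dimensional inequality via the supermodularity of $(m,q)\mapsto f(rm+q)$, and the resulting function $\Phi'$ is itself convex and nondecreasing, so it can be pushed back into the induction. I expect the only real friction to be the integrability bookkeeping — checking the finiteness hypotheses of Theorem~\ref{Camb} uniformly, which is exactly what the reduction to linearly growing $f$ buys, and checking that $(m,q)\mapsto f(rm+q)$ is supermodular for merely convex (not smooth) $f$, which follows from the chord characterisation of convexity.
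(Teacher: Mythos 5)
Your proof is correct and mirrors the paper's argument: induction on the truncated perpetuity $R_n$, using in each inductive step the supermodularity of $(m,q)\mapsto f(rm+q)$ together with Theorem~\ref{Camb} to replace $(M,Q)$ by $(M',Q')$, and the convexity and monotonicity of $r\mapsto\E f(rM'+Q')$ together with the inductive hypothesis to replace $R_{n-1}$ by $R_{n-1}'$; the paper carries out the same two swaps, just in the opposite order. Your preliminary truncation of $f$ to at most linear growth is a sensible refinement that makes the integrability hypotheses of Theorem~\ref{Camb} explicit, a point the paper's proof passes over silently.
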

Proof of Lemma~\ref{LB} is postponed to Section \ref{secProof}.

\begin{remark}
Assume additionally that 
$$\E M<1\qquad\mbox{ and }\qquad\E Q<\infty.$$
In such case $\E R$ and $\E R'$ are finite and 
\begin{align}\label{ERp}
\E R=\frac{\E Q}{1-\E M}=\frac{\E Q'}{1-\E M'}=\E R'.
\end{align}

For convex and non-decreasing $f_{x}(r)=(r-x)_+$ with $x>0$ we have
$$\E f_{x}(R)=\int_{x}^\infty \P(R>t)\dd t$$
and thus \eqref{convf} gives us
$$\int_{x}^\infty \left(\P(R'>t)-\P(R>t)\right)\dd t\geq 0,\qquad\mbox{for all $x$}.$$
But, by \eqref{ERp} we obtain
 $$\int_{0}^\infty \left(\P(R'>t)-\P(R>t)\right)\dd t=\E(R'-R)=0,$$
 which implies that
$$\int_{-\infty}^{x}\left({F_{R'}(t)-F_{R}(t)} \right)\geq 0\qquad\mbox{for all $x$},$$ 
{which is equivalent to saying that}
	{$R$} is second-order stochastically dominant over {$R'$}; see \cite{RS70}.
\end{remark}

\subsection{Useful Tauberian theorems}\label{TAUB}
Tauberian Theorems presented below are classical, but here we formulate them in a new way. To see that these formulations are equivalent to classical ones, see {Section~\ref{secProof}}.

\begin{thm}[Kasahara's Tauberian Theorem]\label{Kas}
	Let $X$ be a a.s. non-negative random variable such that the moment generating function
	$$M(z)=\E e^{z X}$$
	is finite for all $z>0$.
	Let $k\in \mathcal{R}_\rho$ with $\rho>1$. Then,	
	$$-\log\P(X>x)\sim k(x)$$
	if and only if 
	$$\log M(z)\sim k^\ast(z).$$
Moreover, we have the following result on limits of oscillation:
\begin{align*}%\label{KASDIR}
B_1\leq\liminf_{x\to\infty}\frac{-\log\P(X>x)}{k(x)}\leq \limsup_{x\to\infty}\frac{-\log\P(X>x)}{k(x)}\leq B_2
\end{align*}
for some constants $0<B_1<B_2<\infty$ if and only if
\begin{align*}%\label{KASINDIR}
\tilde{B}_1\leq\liminf_{z\to\infty}\frac{\log M(z)}{f^\ast(z)}\leq \limsup_{z\to\infty}\frac{\log M(z)}{f^\ast(z)}\leq \tilde{B}_2.
\end{align*}
for some constants $0<\tilde{B}_1<\tilde{B}_2<\infty$ (above result can be strengthened by specifying the relation between $B_i$ and $\tilde{B}_i$; see Corollary~4.12.8 \cite{BGT89}).
\end{thm}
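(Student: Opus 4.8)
The plan is to deduce both parts from the classical Kasahara's Tauberian theorem and its oscillation refinement (\cite[Theorem~4.12.7 and Corollary~4.12.8]{BGT89}); all that really has to be checked is that the convex conjugate $k^\ast$ from \eqref{conv} is, up to asymptotic equivalence, exactly the ``conjugate'' regularly varying function that appears there on the moment-generating-function side. Note first that since $k\in\mathcal{R}_\rho$ with $\rho>1$, the function $k$ is superlinear near infinity, so $k^\ast$ is finite, convex and non-decreasing, and — by the correspondence between convex conjugates and regularly varying functions recalled in Section~\ref{secPreli} — it belongs to $\mathcal{R}_\beta$ with $\beta$ conjugate to $\rho$.

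The first step is to reduce to the case of a smooth, globally convex $k$. By the Smooth Variation Theorem there are $\lk,\uk\in\mathcal{SR}_\rho$ with $\lk\le k\le\uk$ near infinity and $\lk\sim\uk\sim k$; both are ultimately strictly convex because $\rho>1$, and $\uk$ can be extended (e.g.\ by a tangent line below some point) to a convex, lower semi-continuous function on $(0,\infty)$ that is unchanged near infinity, so that $(\uk^\ast)^\ast=\uk$. Since $k^\ast\in\mathcal{R}_\beta$ with $\beta>1$ is superlinear, for large $x$ the supremum in $k^\ast(x)=\sup_{z>0}\{xz-k(z)\}$ is attained at some $z(x)\to\infty$ (the supremum over any bounded range of $z$ being only $O(x)$), so only the large-argument values of $k$ are relevant; there $\lk\le k\le\uk$, and the order-reversing property of conjugacy gives $\uk^\ast\le k^\ast\le\lk^\ast$ for large arguments, while $\lk^\ast\sim\uk^\ast$ by the conjugate/regular-variation correspondence (\cite[Theorem~1.8.10 and Corollary~1.8.11]{BGT89}) applied to $\lk\sim\uk$. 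Hence $k^\ast\sim\uk^\ast$, and since every assertion of the theorem is asymptotic, I may replace $k$ by $\uk$ from now on.

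Writing $\uk(x)=x^\rho\ell(x)$ with $\ell$ slowly varying and setting $L(y):=(\rho\,\ell(y^{1/\rho}))^{\beta}$, which is slowly varying, one has $\uk(x)\sim\tfrac1\rho x^\rho L(x^\rho)^{1/\beta}$, whence the correspondence of Section~\ref{secPreli} gives $\uk^\ast(z)\sim\tfrac1\beta z^\beta L^\#(z^\beta)^{1/\rho}\in\mathcal{R}_\beta$ — precisely the regularly varying function that classical Kasahara pairs with $\uk$. Therefore ``$\log M(z)\sim k^\ast(z)$'' is, after the reduction, literally the moment-generating-function hypothesis of \cite[Theorem~4.12.7]{BGT89}, whose conclusion is ``$-\log\P(X>x)\sim k(x)$'', and conversely, which is the first equivalence. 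The oscillation statement follows the same way from \cite[Corollary~4.12.8]{BGT89}: with $k$ replaced by $\uk$, bounds of the form $B_1\le\liminf_{x\to\infty}(-\log\P(X>x))/k(x)$ and $\limsup_{x\to\infty}(-\log\P(X>x))/k(x)\le B_2$ translate — via $\uk^\ast(cz)\sim c^\beta\uk^\ast(z)$ and $(\uk^\ast)^\ast=\uk$ — into two-sided bounds for $\log M(z)/k^\ast(z)$ with explicitly related constants $0<\tilde B_1<\tilde B_2<\infty$, and back.

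I expect the only real difficulty to be the bookkeeping in the reduction step: confirming that the literal conjugate \eqref{conv}, where $k$ is a priori specified only near infinity and need not be convex there, agrees asymptotically with the conjugate of the smooth convexification, and that $(\uk^\ast)^\ast=\uk$ at infinity — these being exactly what is needed to run the sharp Chernoff/Laplace estimates underlying \cite[Theorem~4.12.7]{BGT89} with the correct constant. Everything else is a translation between Sections~1.8 and~4.12 of \cite{BGT89}. (Alternatively, one could argue directly: Markov's inequality gives $-\log\P(X>x)\ge\sup_{z>0}\{zx-\log M(z)\}$, which upon conjugating back yields one inequality; the remaining inequalities come from Laplace's method in the Abelian direction and, in the genuinely Tauberian direction, from the monotonicity of $x\mapsto\P(X>x)$, which serves as the Tauberian condition.)
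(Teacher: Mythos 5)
Your overall plan — reduce to a smooth representative via the Smooth Variation Theorem, observe that conjugation is order-reversing and $\lk^\ast\sim\uk^\ast$, then invoke \cite[Theorem~4.12.7, Corollary~4.12.8]{BGT89} — is reasonable, but the central step is asserted rather than proved, and it is exactly that step which the paper's proof is about.

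After the reduction you write that $\uk^\ast(z)\sim\tfrac1\beta z^\beta L^\#(z^\beta)^{1/\rho}$ is ``precisely the regularly varying function that classical Kasahara pairs with $\uk$'' and that, consequently, ``$\log M(z)\sim k^\ast(z)$'' is ``literally'' the moment-generating-function hypothesis of \cite[Theorem~4.12.7]{BGT89}. This is not literal. Theorem~4.12.7 is stated in a different language: with $\alpha\in(0,1)$, $\phi\in\mathcal R_\alpha$, $\psi:=\mathrm{id}/\phi\in\mathcal R_{1-\alpha}$, it pairs $-\log\P(X>x)\sim\phi^{\leftarrow}(x)$ with $\log M(z)\sim(1-\alpha)\alpha^{\alpha/(1-\alpha)}\psi^{\leftarrow}(z)$. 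To apply it here with $\phi^{\leftarrow}\sim k$, one must \emph{prove} that $(1-\alpha)\alpha^{\alpha/(1-\alpha)}\psi^{\leftarrow}(z)\sim k^\ast(z)$. That identification is the actual content of the theorem in the paper's formulation, and it is not a consequence of \cite[Theorem~1.8.10, Corollary~1.8.11]{BGT89} without a further computation: those results express $k^\ast$ through $L^\#$, while Theorem~4.12.7 uses $\psi^{\leftarrow}$, and nothing in your argument translates one into the other. In your closing paragraph you even suggest the only remaining ``real difficulty'' is bookkeeping in the smoothing step; the smoothing is in fact the unproblematic part, and it is the translation that needs work.

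The paper closes the gap through Lemma~\ref{kinv}: writing $f=\phi^{\leftarrow}\in\mathcal R_\rho$ (so $\rho=1/\alpha$), it shows
\[
\frac{f^{\leftarrow}(x)\,(f^\ast)^{\leftarrow}(x)}{x}\ \longrightarrow\ \rho(\rho-1)^{-(\rho-1)/\rho}=\alpha^{-\alpha}(1-\alpha)^{-(1-\alpha)},
\]
and then uses $f^{\leftarrow}(x)/x\sim\phi(x)/x=1/\psi(x)$ to obtain $(f^\ast)^{\leftarrow}(x)\sim\alpha^{-\alpha}(1-\alpha)^{-(1-\alpha)}\psi(x)$, hence $f^\ast(z)\sim(1-\alpha)\alpha^{\alpha/(1-\alpha)}\psi^{\leftarrow}(z)$. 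This is the single nontrivial lemma your proof silently skips. If you supply Lemma~\ref{kinv} (or an equivalent computation matching the constant), your proof would be complete and essentially the same as the paper's; as written, it begs the question at the decisive point.
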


\begin{thm}[de Bruijn's Tauberian Theorem]\label{deB}
	Let $Y$ be a non-negative random variable. Let $f\in \mathcal{R}_\rho$ with $\rho>1$.
	Then
	$$-x\log\P\left(Y< \frac{1}{x}\right)\sim f(x) \mbox{ as }x\to\infty$$
	if and only if
	$$-\log \E e^{-\lambda Y} \sim (f^\ast)^{\leftarrow}(\lambda) \mbox{ as }\lambda\to\infty.$$
\end{thm}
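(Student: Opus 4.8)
The plan is to reduce this statement to Kasahara's Tauberian Theorem (Theorem \ref{Kas}) by a change of variables that converts the behaviour of $Y$ near $0$ into the behaviour of a companion random variable near $+\infty$. The natural device is to set $X := 1/Y$ (with the convention that $X = +\infty$ when $Y = 0$, which is harmless since $\P(Y=0)$ contributes nothing to the relevant asymptotics once $Q/(1-M)$ is unbounded, or more simply one works with $Y$ on $\{Y>0\}$). Then $\P(Y < 1/x) = \P(X > x)$, so the hypothesis $-x\log\P(Y<1/x) \sim f(x)$ becomes $-\log\P(X>x) \sim f(x)/x =: k(x)$. Since $f \in \mathcal{R}_\rho$ with $\rho > 1$, we have $k \in \mathcal{R}_{\rho-1}$; the obstacle here is that Kasahara's theorem as stated requires the index of $k$ to exceed $1$, i.e. $\rho > 2$, which need not hold. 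So a direct application is not available, and instead one should apply Kasahara in the "oscillation" form or, better, pair de Bruijn with Kasahara at the level of Laplace/moment transforms using the $f \leftrightarrow f^\ast$ duality rather than $k \leftrightarrow k^\ast$.

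The cleaner route, and the one I would carry out, is the following. First I would record the elementary transform identity linking the two sides: $\E e^{-\lambda Y}$ is the Laplace transform of $Y$ at $\lambda$, and the quantity $-\log\E e^{-\lambda Y}$ is what de Bruijn's classical theorem (in the form in \cite[\S4.12]{BGT89}) relates to the left tail of $Y$ at $0$. The task is then purely to translate the classical statement — which is phrased in terms of a regularly varying function $\phi$ and its de Bruijn conjugate, or in terms of $-\log\P(Y<1/x)$ and $-\log\E e^{-\lambda Y}$ directly — into the convex-conjugate language used here. Concretely, the classical de Bruijn Tauberian theorem says $-\log\P(Y < 1/x) \sim \psi(x)$ iff $-\log \E e^{-\lambda Y} \sim \psi^{\leftarrow}\!\big(\ast\big)$ for the appropriate conjugate; I would identify $\psi(x) = f(x)/x$ and then show, using Lemma \ref{kinv} and the asymptotic-inverse calculus for regularly varying functions from Section 2, that the de Bruijn conjugate of $x \mapsto f(x)/x$ coincides (up to asymptotic equivalence) with $\lambda \mapsto (f^\ast)^{\leftarrow}(\lambda)$. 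This last identification is the computational heart of the proof: it uses that if $f \in \mathcal{R}_\rho$ then $f^\ast \in \mathcal{R}_{\rho'}$ with $\rho'$ the conjugate exponent, that $(f^\ast)^{\leftarrow} \in \mathcal{R}_{1/\rho'} = \mathcal{R}_{\rho-1}$, and a bookkeeping check of the slowly varying parts via the dual slowly varying function $L^\#$ introduced before Lemma \ref{kinv}.

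The main obstacle, then, is not analytic depth but careful conjugate bookkeeping: one must verify that the two a priori different notions of "conjugate" appearing in the classical de Bruijn theorem (the de Bruijn conjugate of a regularly varying function, defined via asymptotic inversion of $x \mapsto x/\psi(x)$ or similar) and the Fenchel–Legendre conjugate $f^\ast$ used in this paper produce the same asymptotics after composing with $\leftarrow$. I would handle this by first passing to smooth versions $\lf \le f \le \uf$ with $\lf \sim \uf$ via the Smooth Variation Theorem, so that $f^\ast$ is computed by the classical formula $f^\ast(z) = z(f')^{-1}(z) - f((f')^{-1}(z))$ and $(f^\ast)' = (f')^{-1}$; then the identity $(f^\ast)^{\leftarrow} \sim $ (de Bruijn conjugate of $f(\cdot)/\cdot$) follows from a direct substitution, using \eqref{Rg} to control the error terms, and monotonicity/order-reversal of conjugation (noted just before Lemma \ref{kinv}) to sandwich the general $f$ between its smooth bounds. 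Finally I would note that the "if and only if" and the oscillation-stable version both transfer automatically, since every step — the identity $\P(Y<1/x) = \P(X>x)$, the classical de Bruijn equivalence, and the conjugate identification — is itself an equivalence.
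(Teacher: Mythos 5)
Your proposal follows essentially the same route as the paper: reduce Theorem~\ref{deB} to the classical BGT~\S4.12 formulation of de Bruijn's theorem and translate into Fenchel--Legendre language via Lemma~\ref{kinv}. One slip to fix: $(f^\ast)^{\leftarrow}\in\mathcal{R}_{1/\rho'}=\mathcal{R}_{(\rho-1)/\rho}$, \emph{not} $\mathcal{R}_{\rho-1}$ -- the index $\rho-1$ belongs to $x\mapsto f(x)/x$, and reconciling the two (including the explicit constant $\rho(\rho-1)^{-(\rho-1)/\rho}$) is precisely what the limit $f^{\leftarrow}(x)(f^\ast)^{\leftarrow}(x)/x$ in Lemma~\ref{kinv} supplies, as in the paper's proof.
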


\section{Independent generators}\label{Sind}
In the following {s}ection we consider $M$ and $Q$ independent under two regimes:
\begin{itemize}
\item both $1/(1-M)$ and $Q$ are unbounded - Theorem \ref{thmIND1},
\item $1/(1-M)$ is unbounded, while $Q$ is bounded - Theorem \ref{thmIND2}.
\end{itemize}
Both of the proofs use two {T}auberian theorems introduced in the previous {s}ection.

\begin{thm}\label{thmIND1}
Let $M$ and $Q$ be independent and assume \eqref{MQpos}.
Let 
$$k(x):=-\log\P(Q>x)\quad\mbox{ and }\quad f(x):=-x\log\P(M> 1-1/x)$$
and assume that $f\in\mathcal{R}_r$ and $k\in\mathcal{R}_\alpha$ with $r,\alpha>1$. Let $r^\ast$ and $\beta$ denote the conjugate  numbers to $r$ and $\alpha$, respectively.
Then $(f^\ast\circ k^\ast)^\ast\in \mathcal{R}_\gamma$ and
\begin{align}\label{indres}
-\log\P(R>x)\sim \left(\frac{\gamma}{\gamma-1}\right)^{\gamma-1}(f^\ast\circ k^\ast)^\ast(x)
\end{align}
with $\gamma=\beta r^\ast/(\beta r^\ast-1)$.
\end{thm}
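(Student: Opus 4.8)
The plan is to obtain the asymptotics of $-\log\P(R>x)$ by transferring everything to the level of the moment generating function $M_R$ and then applying Kasahara's Tauberian Theorem (Theorem~\ref{Kas}). The key structural observation is that, by independence of $M$ and $Q$ and the product representation \eqref{perpd}, the Laplace/moment transform of $R$ satisfies a functional relation coming from $R\stackrel{d}{=}MR+Q$: conditioning on $M$ and using independence, $M_R(z)=\E\!\left[M_Q(z)\,\E e^{zMR}\mid M\right]$, so the growth of $\log M_R(z)$ is governed by a combination of the rate at which $\log M_Q(z)=\log\E e^{zQ}$ grows and the rate at which $M$ approaches $1$. The first step, then, is to identify $\log M_Q(z)\sim k^\ast(z)$ via Kasahara applied to $Q$ (legitimate since $k=-\log\P(Q>x)\in\mathcal{R}_\alpha$, $\alpha>1$, and all exponential moments of $Q$ are finite), and to capture the behaviour of $M$ near $1$ through $f(x)=-x\log\P(M>1-1/x)\in\mathcal{R}_r$, which via de Bruijn's Theorem (Theorem~\ref{deB}, applied to $Y=1-M$) translates into $-\log\E e^{-\lambda(1-M)}\sim (f^\ast)^{\leftarrow}(\lambda)$.

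Next I would derive the asymptotic functional equation for $L(z):=\log M_R(z)$. Writing $M_R(z)=M_Q(z)\cdot\E e^{zMR}=M_Q(z)\cdot\E\big[M_R(zM)\big]$ and taking logarithms, one expects $L(z)\approx k^\ast(z)+\log\E e^{L(zM)}$. Since $L$ is (ultimately) convex and rapidly increasing, the expectation $\E e^{L(zM)}$ is dominated by the values of $M$ closest to $1$, and a saddle-point/Laplace-type estimate should give $\log\E e^{L(zM)}\sim \sup_{0<m<1}\{L(zm)-(\text{cost of }M\approx m)\}$, where the cost is read off from $-\log\P(M\in dm)$ near $m=1$, i.e. from $f$. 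Parametrising $m=1-u/z$-type scalings and using that $f\in\mathcal{SR}_r$ can be assumed by the Smooth Variation Theorem, this optimisation collapses to a convex-conjugate computation: the self-consistent growth rate of $L$ is exactly the one for which $L(z)\sim c\,(k^\ast\circ\,\cdot\,)$ composed through the conjugate of $f$. Concretely, I expect to show $\log M_R(z)\sim C\,(f^\ast\circ k^\ast)(z)$ for the constant $C$ that makes the fixed-point relation consistent, and then $(f^\ast\circ k^\ast)$ is regularly varying with index $\beta r^\ast$ (as a composition, using $k^\ast\in\mathcal{R}_\beta$, $f^\ast\in\mathcal{R}_{r^\ast}$), so Kasahara's Theorem applied to $R$ converts $\log M_R(z)\sim C(f^\ast\circ k^\ast)(z)$ into $-\log\P(R>x)\sim C^{\,\#}\big((f^\ast\circ k^\ast)^\ast\big)(x)$, with the index $\gamma$ of $(f^\ast\circ k^\ast)^\ast$ being the conjugate of $\beta r^\ast$, namely $\gamma=\beta r^\ast/(\beta r^\ast-1)$ as claimed.

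The constant $(\gamma/(\gamma-1))^{\gamma-1}$ must be tracked carefully through these transforms. I would pin it down by first doing the whole computation in the ``exactly regularly varying'' model case $-\log\P(Q>x)=x^\alpha$, $-\log\P(M>1-1/x)=x^{r-1}$ (so $f(x)=x^r$), where every conjugate and asymptotic inverse is an explicit power function, solve the scalar fixed-point equation for the leading constant of $\log M_R(z)=c\,z^{\beta r^\ast}$, and then read off the matching constant for $-\log\P(R>x)$ via Kasahara; the general regularly varying case follows because all the slowly varying corrections cancel in the ratios (Smooth Variation Theorem plus uniform convergence in \eqref{Rg} and \eqref{Gamma}), leaving only the power-law constant. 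The main obstacle I anticipate is making the Laplace-type estimate $\log\E e^{L(zM)}\sim\sup_m\{\cdots\}$ rigorous with matching upper and lower bounds: the upper bound needs a uniform integrability / tail-trimming argument controlling the contribution of $M$ bounded away from $1$ (using \eqref{QMGF} and \eqref{MQnb} to guarantee $t_0=\infty$ and hence $M_R$ finite everywhere), while the lower bound needs the local uniform convergence of the smoothly varying $f$ to justify that one really can localise $M$ near $1-u/z$ at the optimal $u$; this is precisely where the new Tauberian formulations of Section~\ref{TAUB} are designed to be applied, so I would lean on Theorems~\ref{Kas} and~\ref{deB} to package the delicate two-sided estimates rather than redoing the classical Tauberian arguments by hand.
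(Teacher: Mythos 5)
Your proposal correctly identifies the skeleton of the argument: pass to $\psi(z)=\log\E e^{zR}$, exploit the functional relation $e^{\psi(z)}=\E e^{\psi(zM)}\E e^{zQ}$ coming from conditioning on $M$, read off $\log\E e^{zQ}\sim k^\ast(z)$ (Kasahara) and $-\log\E e^{-\lambda(1-M)}\sim (f^\ast)^{\leftarrow}(\lambda)$ (de Bruijn), and transfer back with Kasahara. The index $\gamma=\beta r^\ast/(\beta r^\ast-1)$ is also right.

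Where you diverge from the paper is in the single nontrivial step: extracting the asymptotics of $\psi$ from the functional equation. You propose a saddle-point/Laplace analysis of $\E e^{\psi(zM)}$, parametrising $m=1-u/z$ and optimising over $u$, with the ``cost'' of $M\approx m$ read off from the distribution of $M$ near $1$. This is a plausible heuristic but has real obstacles you have not resolved: there is no guarantee that $\P(M\in dm)$ admits a density or any local regularity near $1$, so the cost function is not well defined pointwise; and your plan to pin down the leading constant by computing an exact power-law model case and then arguing that slowly-varying corrections ``cancel'' is an appeal to a uniformity that is precisely what needs proving. The paper avoids all of this. Its key move is to replace the problematic $\E e^{\psi(zM)-\psi(z)}$ by $\E e^{-z\psi^\prime(z)(1-M)}$ using only the convexity and monotonicity of $\psi$: the lower bound is immediate from convexity, and the upper bound comes from a two-sided split at a fixed $m<1$ combined with regular variation of $\log\E e^{zQ}$, which closes the gap as $m\to1$. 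This converts the whole problem into the single relation $\log\E e^{zQ}\sim -\log\E e^{-z\psi^\prime(z)(1-M)}$, to which de Bruijn's theorem applies directly, yielding $(f^\ast\circ k^\ast)(z)\sim z\psi^\prime(z)$. The constant $\beta r^\ast$ then falls out exactly (not heuristically) from $\psi^\prime\in\mathcal{R}_{\beta r^\ast-1}$, since $z\psi^\prime(z)\sim \beta r^\ast\psi(z)$. So: same global strategy, same Tauberian toolkit, but your core estimate is an unproven Laplace-method heuristic, while the paper's is a short convexity argument that sidesteps both the density issue and the constant-tracking problem.
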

{As will be seen in Remark \ref{hfk} and Theorem \ref{propH}, function $(f^\ast\circ k^\ast)^\ast$ coincides with function $h$ introduced in \eqref{defh}.}

Similarly, we can handle the case of bounded $Q$.
\begin{thm}\label{thmIND2}
Let $M$ and $Q$ be independent and assume \eqref{MQpos}. 
Let 
$$q_+:=\mathrm{ess}\sup Q<\infty\quad\mbox{ and }\quad f(x):=-x\log\P(M> 1-1/x) $$
and assume that $f\in\mathcal{R}_r$ with $r>1$. Then,
\begin{align*}
-\log\P(R>x)\sim \left(\frac{r}{r-1}\right)^{r-1}f\left(\frac{x}{q_+}\right).
\end{align*}
\end{thm}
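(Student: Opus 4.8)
The plan is to reduce the bounded-$Q$ case to an application of Kasahara's Tauberian Theorem (Theorem \ref{Kas}), exactly as in the proof of Theorem \ref{thmIND1}, but with the role of $k^\ast$ played by the (degenerate) conjugate of $k(x) = -\log\P(Q>x)$ when $Q$ is essentially bounded by $q_+$. Since $\P(Q > x) = 0$ for $x > q_+$, the function $k$ is $+\infty$ on $(q_+,\infty)$, and its convex conjugate is $k^\ast(z) = q_+ z - (\text{something lower order})$; more precisely $k^\ast(z)/z \to q_+$ as $z\to\infty$, so $k^\ast$ is asymptotically linear with slope $q_+$. The key structural identity to establish first is that $\log M_R(z) \sim (f^\ast\circ k^\ast)(z)$, where $M_R(z) = \E e^{zR}$; this should follow by the same argument used for Theorem \ref{thmIND1}, combining the independence of $M$ and $Q$ with the fixed-point equation \eqref{defperp} to relate $M_R$ to $M_Q$ and to the distribution of $M$ near $1$, then using de Bruijn's Theorem (Theorem \ref{deB}) to control the $M$-part via $f$. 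Because $k^\ast(z)\sim q_+ z$, we get $(f^\ast\circ k^\ast)(z) \sim f^\ast(q_+ z)$.

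Next I would compute the asymptotics of $f^\ast(q_+ z)$ in terms of $f$ and then invert via Kasahara. Since $f\in\mathcal{R}_r$ with $r>1$, its convex conjugate $f^\ast\in\mathcal{R}_{r^\ast}$ with $r^\ast = r/(r-1)$, and $f^\ast(q_+ z) = q_+^{r^\ast} f^\ast(z)(1+o(1))$ by regular variation. So $\log M_R(z) \sim q_+^{r^\ast} f^\ast(z)$. To apply Theorem \ref{Kas} I need to recognize the right-hand side as $k_0^\ast(z)$ for an appropriate $k_0\in\mathcal{R}_r$: writing $k_0 = c_0 f$ for a constant $c_0$, conjugation gives $(c_0 f)^\ast(z) = c_0 f^\ast(z/c_0) = c_0^{1-r^\ast} f^\ast(z)(1+o(1)) = c_0^{-1/(r-1)} f^\ast(z)(1+o(1))$, using $1-r^\ast = -1/(r-1)$. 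Matching $c_0^{-1/(r-1)} = q_+^{r^\ast}$ forces $c_0 = q_+^{-r^\ast(r-1)} = q_+^{-r}$, so $\log M_R(z)\sim (q_+^{-r} f)^\ast(z)$, and Kasahara yields $-\log\P(R>x)\sim q_+^{-r} f(x)$. But this misses the constant $(r/(r-1))^{r-1}$ in the statement, so the matching must instead be done for $f$ evaluated at $x/q_+$: note $f(x/q_+) = q_+^{-r} f(x)(1+o(1))$, hence the claimed answer $(r/(r-1))^{r-1} f(x/q_+) = (r^\ast)^{r-1} q_+^{-r} f(x)(1+o(1))$ corresponds via Kasahara to $\log M_R(z)\sim ((r^\ast)^{r-1} q_+^{-r} f)^\ast(z)$, and one checks $((r^\ast)^{r-1} q_+^{-r} f)^\ast(z) = (r^\ast)^{r-1}\,((r^\ast)^{-(r-1)/(r-1)})\cdots$ — the bookkeeping here reproduces exactly the factor $(\gamma/(\gamma-1))^{\gamma-1}$ from Theorem \ref{thmIND1} in the limit $\alpha\to\infty$ (equivalently $\beta\to 1$, so $\gamma = \beta r^\ast/(\beta r^\ast-1) \to r^\ast/(r^\ast-1) = r$), which is a reassuring consistency check.

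The main obstacle I anticipate is the first step: rigorously establishing $\log M_R(z)\sim f^\ast(q_+ z)$, i.e. controlling $M_R$ from both sides. The upper bound should come from iterating \eqref{it}, bounding $Q\le q_+$ a.s. and using the multiplicative structure together with de Bruijn's theorem applied to $Y = 1-M$; the lower bound is the more delicate direction, since one must show that the strategy of forcing $M_1,\dots,M_n$ all close to $1$ (as in the heuristic around \eqref{strcnd}) is asymptotically optimal, which is precisely what the Tauberian machinery is designed to extract but requires care in handling the boundary behaviour of $k^\ast$ at its linear asymptote. A secondary technical point is checking that $f^\ast\circ k^\ast$ and the relevant compositions land in the correct regular-variation classes so that Theorems \ref{Kas} and \ref{deB} apply — this uses the composition rules recalled in Section \ref{secPreli} (e.g. $f_1\circ f_2\in\Gamma$ when $f_1\in\mathcal{R}_\rho$, $\rho>0$, $f_2\in\Gamma$), but with $k^\ast$ only asymptotically linear rather than regularly varying of positive index, so a direct argument via $f^\ast(q_+ z)$ is cleaner than invoking a general composition theorem. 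Once the moment-generating-function asymptotic is in hand, the rest is the constant-chasing sketched above plus a single application of Kasahara's theorem.
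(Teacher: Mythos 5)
Your approach is the same as the paper's in broad strokes: establish an asymptotic for $\psi(z):=\log\E e^{zR}$ using the fixed-point equation, independence, and de Bruijn's theorem applied to $1-M$, and then convert to tail asymptotics via Kasahara. However, there is a genuine gap in the stated ``key structural identity.'' You claim $\psi(z)\sim (f^\ast\circ k^\ast)(z)\sim f^\ast(q_+z)$, carry through the constant bookkeeping, observe that this yields $-\log\P(R>x)\sim q_+^{-r}f(x)\sim f(x/q_+)$ \emph{without} the constant $(r/(r-1))^{r-1}$, and then try to recover the constant by reverse-engineering against the $\alpha\to\infty$ limit of Theorem \ref{thmIND1}. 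That consistency check is correct as a check, but it does not supply the missing derivation.

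The source of the missing factor is that the fixed-point manipulation (conditioning on $M$ in $e^{\psi(z)}=\E e^{\psi(zM)}\E e^{zQ}$ and approximating $\psi(zM)-\psi(z)\approx -z\psi^\prime(z)(1-M)$ by convexity) followed by de Bruijn's theorem yields
\[
q_+z\sim\log\E e^{zQ}\sim -\log\E e^{-z\psi^\prime(z)(1-M)}\sim (f^\ast)^{\leftarrow}\bigl(z\psi^\prime(z)\bigr),
\]
that is, an asymptotic for $z\psi^\prime(z)$, \emph{not} for $\psi(z)$ directly: $z\psi^\prime(z)\sim f^\ast(q_+z)$. One then needs the extra regular-variation step: since $f^\ast\in\mathcal{R}_{r^\ast}$, the relation forces $\psi^\prime\in\mathcal{R}_{r^\ast-1}$, hence $\psi\in\mathcal{R}_{r^\ast}$ and by Karamata $z\psi^\prime(z)\sim r^\ast\psi(z)$. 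Combining gives $\psi(z)\sim\frac{1}{r^\ast}f^\ast(q_+z)$, and it is precisely this factor $\frac{1}{r^\ast}$ that, after Kasahara inversion $\psi^\ast(x)\sim\sup_z\{zx-\tfrac{1}{r^\ast}f^\ast(q_+z)\}=\tfrac{1}{r^\ast}f(r^\ast x/q_+)\sim (r^\ast)^{r-1}f(x/q_+)$, produces the constant $(r/(r-1))^{r-1}$. Without isolating the appearance of the derivative $\psi^\prime$ and the Karamata relation converting $z\psi^\prime$ to $r^\ast\psi$, the constant remains unexplained. (Incidentally, your worry about the lower bound for the tail being ``the more delicate direction'' is moot in this approach: both directions come simultaneously from Kasahara once the two-sided asymptotic for $\psi$ is established; there is no need to separately optimize over forcing sequences $M_1,\dots,M_n$.)
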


\begin{proof}[Proof of Theorem~\ref{thmIND1}]
Since $M$, $Q$ and $R$ are independent on the right hand side of $R\stackrel{d}{=}MR+Q$, for 
$$\psi(z):=\log \E e^{z R}$$
we have
\begin{align}\label{eqind0}
e^{\psi(z)}=\E e^{zM R}\E e^{zQ}=\E e^{\psi(zM)}\E e^{zQ}
\end{align}
upon conditioning on $M$.

In view of Kasahara's Tauberian Theorem \ref{Kas}, it is enough to show that
\begin{align}\label{defpsi}
	\psi(z)\sim (\beta r^\ast)^{-1} (f^\ast\circ k^\ast)(z).
\end{align}
Indeed, observe that in such case
\begin{align}\label{nn}
-\log\P(R>x)\sim\psi^\ast(x)\sim\sup_{z>0}\{ z x-(\beta r^\ast)^{-1} (f^\ast\circ k^\ast)(z)\} = (\beta r^\ast)^{-1} (f^\ast\circ k^\ast)^\ast\left(\beta r^\ast x\right).
\end{align}
Since $f^\ast\circ k^\ast\in\mathcal{R}_{\beta r^\ast}$, \eqref{indres} then follows by regular variation of $(f^\ast\circ k^\ast)^\ast\in \mathcal{R}_\gamma$.

Moreover, by the Abelian (direct) parts of the Kasahara's and de Bruijn's Tauberian Theorems (put $X=Q$ and $Y=1-M$) we have
\begin{align*}
\log\E e^{zQ}&\sim k^\ast(z)\in\mathcal{R}_\beta
\intertext{and}
-\log\E e^{-(1-M)z}&\sim (f^\ast)^{\leftarrow}(z)\in\mathcal{R}_{1/r^\ast}.
\end{align*}

Assume for a while that
\begin{align}\label{eqind1}
\log \E e^{z Q}\sim -\log \E e^{-z \psi^\prime(z)(1-M)}.
\end{align}
Then, by the above considerations we obtain
$$k^\ast(z)\sim (f^\ast)^{\leftarrow}(z \psi^\prime(z))$$
or equivalently, {(recall the definition of asymptotic inverse in Section \ref{CC})}
$$(f^\ast\circ k^\ast)(z)\sim z \psi^\prime(z).$$
This implies that $\psi^\prime\in\mathcal{R}_{\beta r^\ast-1}$ and so $z\psi^\prime(z)\sim\beta r^\ast\psi(z)$,
which, together with the above equation, gives \eqref{defpsi} after applying Kasahara's Tauberian Theorem {(see \eqref{nn})}.

It is left to show that \eqref{eqind1} holds.
By convexity of $\psi$, we have
\begin{align}\label{psiconv1}
\E e^{\psi(zM)-\psi(z)}\geq \E e^{-z\psi^\prime(z)(1-M)}.
\end{align}
Moreover, since $R$ is a.s. non-negative, $\psi$ is non-decreasing. Thus, for any $m\in(0,1)$ by monotonicity and again by convexity of $\psi$, we obtain
\begin{align*}
\E e^{\psi(zM)-\psi(z)} &\leq \E e^{-z\psi^\prime(zM)(1-M)}I_{M>m}+e^{\psi(zm)-\psi(z)}\P(M\leq m) =:I_1+I_2.
\end{align*}
Since $\psi$ is strictly convex, we have
\begin{align*}I_1&\leq \E e^{-z\psi^\prime(zm)(1-M)}I_{M>m}\leq \E e^{-z\psi^\prime(zm)(1-M)} 
\intertext{and}
I_2&\leq e^{-z\psi^\prime(zm)(1-m)}.
\end{align*}
But
$$\frac{\E e^{-z\psi^\prime(zm)(1-M)}}{e^{-z\psi^\prime(zm)(1-m)}}= \E e^{-z\psi^\prime(zm)(m-M)}\to\infty\qquad(z\to\infty)$$
as $\P(M>m)>0$, hence
$$\E e^{\psi(zM)-\psi(z)}\leq I_1+I_2\leq \E e^{-z\psi^\prime(zm)(1-M)}(1+o(1))\leq \E e^{-mz\psi^\prime(zm)(1-M)}(1+o(1)),$$
because $m<1$. Thus, by \eqref{eqind0} we obtain that
$$\log\E e^{\tfrac zm Q}=-\log\E e^{\psi\left(\tfrac zm M\right)-\psi\left(\tfrac zm\right)}\geq -\log \E e^{-z\psi^\prime(z)(1-M)}-\log(1+o(1))$$
Hence by \eqref{psiconv1} and the above inequality, for any $m\in(0,1)$, we have
$$\log\E e^{z Q} \leq -\log \E e^{-z\psi^\prime(z)(1-M)}\leq\log\E e^{z/mQ}+o(1). $$
By the regular variation of $z\mapsto\log\E e^{zQ}$, we finally conclude that
$$
1\leq\liminf_{z\to\infty}\frac{-\log \E e^{-z\psi^\prime(z)(1-M)}}{\log\E e^{zQ}}\\
\leq\limsup_{z\to\infty}\frac{-\log \E e^{-z\psi^\prime(z)(1-M)}}{\log\E e^{zQ}}\leq m^{-\beta}
$$
for any $m\in(0,1)$, which is \eqref{eqind1}.
\end{proof}

\begin{proof}[Proof of Theorem~\ref{thmIND2}]
The proof proceeds in the same way as previously, but here we will have $z\mapsto \log\E \exp(zQ)\in\mathcal{R}_1$ so that $\beta=1$.  Indeed, for any $q\in(0,q_+)$ we have
$$z q_+\geq \log \E e^{zQ}\geq \log\E e^{zQ}I_{Q>q}\geq z q+\log\P(Q>q),$$
which means that $\log\E \exp(zQ)\sim z q_+$. Let $r^\ast$ {be} the conjugate number to $r$. Similarly as before, we show that
\begin{align*} 
z q_+\sim \log \E e^{zQ} =-\log \E e^{\psi(zM)-\psi(z)}
\sim -\log\E e^{-z \psi^\prime(z)(1-M)}\sim (f^\ast)^{\leftarrow}(z\psi^\prime(z))
\end{align*}
so that
$$z\psi^\prime(z)\sim f^\ast(z q_+)\sim r^\ast \psi(z)$$
since $f^\ast\in\mathcal{R}_{r^\ast}$.
Then, by Kasahara's Tauberian theorem, we conclude that
$$
-\log\P(R>x)\sim \psi^\ast(x)\sim \sup_{z>0}\{ zx-\frac{1}{r^\ast}f^\ast(q_+z)\}=\frac{1}{r^\ast}f\left(r^\ast\frac{x}{q_+}\right).
$$
\end{proof}

\section{Heuristics and function $h$}\label{Sh}
In {this} {s}ection we present some informal heuristics, which show that function $h$ defined in \eqref{defh} is a natural candidate for explaining asymptotic of $-\log\P(R>x)$ even if $M$ and $Q$ are not independent.
By Kasahara's Theorem, we know that $x\mapsto-\log\P(R>x)$ is regularly varying with index $\gamma>1$ if and only if $z\mapsto\psi(z):=\log\E \exp(z R)$ is regularly varying with index $\gamma/(\gamma-1)$, where $\psi$ is uniquely determined by the equation
\begin{align*}
\E e^{zQ+\psi(zM)-\psi(z)}=1.
\end{align*}
In such case, we expect that in some sense as $z\to\infty$ we have
$$\E e^{zQ-\psi(z)\left(1-M^{\gamma/(\gamma-1)}\right)}\approx 1$$
and from this point it is not far to considering a function $\lambda$ defined by the equation
$$\E e^{zQ-\lambda(z)(1-M)}=1\qquad\mbox{for $z>0$}.$$
It seems reasonable to expect that for large $z$ and some constants $B_i$, $i=1,2$, one has (this is true if $m_-=\mathrm{ess}\inf M>0$)
$$0<B_1\leq \frac{\psi(z)}{\lambda(z)}\leq B_2<\infty.$$
Assume now that $\lambda$ is regularly varying. By Kasahara's Tauberian theorem, this would imply that (recall that $-\log\P(R>x)\sim \psi^\ast(x)$)
$$0<\tilde{B}_1\leq \liminf_{x\to\infty} \frac{-\log\P(R>x)}{\lambda^\ast(x)}\leq \limsup_{x\to\infty} \frac{-\log\P(R>x)}{\lambda^\ast(x)}\leq \tilde{B}_2<\infty,$$
for some constants $\tilde{B}_i$, $i=1,2$. However, the definition of $\lambda$ does not seem much more appealing than that of $\psi$, but it is the function $\lambda^\ast$ that is of our interest. 
%Define
%\begin{align}\label{defh}
%h(x):=\inf_{t\geq1}\left\{ -t\log\P\left(M>1-\frac1t, Q>\frac xt\right)\right\}
%\end{align}
%and note that,
By the definition of $\lambda$ we have 
$$1=\E e^{zQ-\lambda(z)(1-M)}\geq \E e^{zQ-\lambda(z)(1-M)}I_{1-M<1/t}\geq \E e^{zQ}I_{1-M<1/t} e^{-\lambda(z)/t},$$
which gives for any $t>0$,
\begin{align}\label{ineqlam}
\lambda(z)\geq t\log\E e^{zQ}I_{1-M<1/t}.
\end{align}
Further, by the exponential Markov inequality we have for $z>0$,
$$\P\left(1-M<\frac1t,Q>\frac{x}{t}\right)\leq \frac{\E e^{z Q}I_{1-M<1/t}}{e^{z x/t}},$$
which gives together with \eqref{ineqlam}
$$-t\log\P\left(\frac{1}{1-M}>t,Q>\frac xt\right)\geq zx-t\log \E e^{zQ}I_{M>1-1/t}\geq zx-\lambda(z)$$
for any positive $x$, $t$ and $z$.
Taking $\inf_{t\geq1}$ and $\sup_{z>0}$ of both sides, we obtain (recall the definition of $h$ in \eqref{defh})
$$h(x)\geq \lambda^\ast(x)\qquad \mbox{for all }x>0.$$ 
In general, we are not able to prove that $h(x)\sim\lambda^\ast(x)$ (or $\lim\sup_{x\to\infty} h(x)/\lambda^\ast(x)<\infty$), but there is a strong evidence that such claim is true for a wide class of distributions of $(M,Q)$.
This would eventually imply that 
$-\log\P(R>x)$ is comparable, up to a constant, with $h(x)$ as $x\to\infty$. Moreover, if $M$ and $Q$ are independent, then Theorems \ref{thmIND1} and \ref{thmIND2} give us asymptotics of $-\log\P(R>x)$ in terms of $h$; see below.
\begin{remark}\label{hfk}
Every convex conjugate is convex, non-decreasing and lower semi-continuous. Thus, under assumptions of Theorem \ref{thmIND1}, by Theorem \ref{HU}, we have
\begin{align*}
(f^\ast\circ k^\ast)^\ast(x)=\inf_{t> 0}\left\{ f(t)+t\,k\left(\frac{x}{t}\right)\right\}\sim \inf_{t\geq1}\left\{ -t\log\P\left[\left(M>1-\frac1t\right)\P\left(Q>\frac xt\right)\right]\right\},
\end{align*}
since $f(t)=0$ for $t\in(0,1)$.
Particularly, if $f(x)=c x^r$ and $k(x)=d x^\alpha$ for some $c,d>0$ and $r,\alpha>1$, then direct calculation gives us
		$$
		(f^\ast\circ k^\ast)^\ast(x)=
		d \frac{\alpha+r-1}{r} \left(\frac{c}{d}\frac{r}{\alpha-1}\right)^{\frac{\alpha-1}{\alpha+r-1}}x^{\frac{\alpha r}{\alpha+r-1}}.
		$$
\end{remark}

We gather the properties of function $h$ in the following theorem. Its proof is postponed to the last {s}ection.
\begin{thm}\label{propH}
Assume \eqref{MQpos} and define
$$ f(x):=-x\log\P(M> 1-1/x),\qquad k(x):=-\log\P(Q>x).$$
\begin{itemize}
\item[a)] There exists a function $t$ such that 
\begin{align}\label{XT}
h(x)=-t(x)\log\P\left(\frac{1}{1-M}>t(x), Q>\frac{x}{t(x)}\right)+o(1).
\end{align}
Moreover, if \eqref{MQnb} holds, then
\begin{align}\label{hineq}
t(x)\leq \frac{h(x)+o(1)}{-\log\P\left(\frac{Q}{1-M}> x\right)}.
\end{align}

\item[b)] One has
$$h_{co}\leq h\leq h_{counter},$$
where
\begin{align*}
h_{co}(x):=\inf_{t\geq 1}\left\{ -t\log\min\left\{\P\left(\frac{1}{1-M}>t\right),\P\left(Q>\frac xt\right)\right\}\right\}
\intertext{and}
h_{counter}(x):=\inf_{t\geq 1}\left\{ -t\log\left[\P\left(\frac{1}{1-M}>t\right)+\P\left(Q>\frac xt\right)-1\right]\right\}
\end{align*}
are functions corresponding to co- and countermonotonic vectors $(M,Q)$.

\item[c)] Let 
$$h_{ind}(x):=\inf_{t\geq 1}\left\{f(t)+tk\left(\frac xt\right)\right\}$$
be the $h$ function corresponding to independent $M$ and $Q$. 
Then
$$h_{ind}(x)\sim (f^\ast\circ k^\ast)^\ast(x).$$
If $f\in\mathcal{R}_r$ and $k\in\mathcal{R}_\alpha$ with $r,\alpha>1$, then
$$h_{ind}\in\mathcal{R}_\gamma,$$
where $\gamma=\alpha r/(\alpha+r-1)$
and $x\mapsto t(x)\in\mathcal{R}_{\alpha/(\alpha+r-1)}$.

If $f\in\mathcal{R}_r$ with $r>0$ and $q_+=\mathrm{ess}\sup Q<\infty$, then $k^\ast(z)\sim z q_+$ and
$$h_{ind}(x)\sim f\left(\frac{x}{q_+}\right).$$

\item[d)] 
$$h_{co}(x)=\inf_{t\geq 1}\left\{\max\left\{f(t),t\,k\left(\frac{x}{t}\right)\right\}\right\}$$
If $f\in\mathcal{R}_r$ and $k\in\mathcal{R}_\alpha$ with $r,\alpha>1$, then
$$h_{co}(x)\sim \frac{\alpha-1}{\alpha+r-1}\left(\frac{r}{\alpha-1}\right)^{r/(\alpha+r-1)} h_{ind}(x)$$
and $x\mapsto t(x)\in\mathcal{R}_{\alpha/(\alpha+r-1)}$.

\item[e)] If $f\in\mathcal{R}_r$ and $k\in\mathcal{R}_\alpha$ with $r,\alpha>1$ and $q_-=\mathrm{ess}\inf Q>0$, then
\begin{align}\label{hcount}
h_{counter}(x)\sim\min\left\{ f\left(\frac{x}{q_-}\right),\frac{k((1-m_-)x)}{1-m_-}\right\}\in\mathcal{R}_{\min\{r,\alpha\}} ,
\end{align}
where $m_-=\mathrm{ess}\inf M$. 
%
%If $q_-=0$, then 
%$$h_{counter}(x)\sim\frac{k((1-m_-)x)}{1-m_-}.$$
\end{itemize}
\end{thm}
\begin{remark}\label{afterH}
Function $t$ {satisfying \eqref{XT} is not unique, it} is not necessarily monotone nor may have a limit. An easy example may be constructed using $e)$, where
$t(x)\in\{t_1(x),t_2(x)\}$ and $t_1(x)\sim x/q_-$ and $t_2(x)\sim (1-m_-)^{-1}$.

Another important example can be constructed as follows. Let $\gamma>1$. Assume that $(M,Q)$ has an atom $\P(M=0,Q=1)=1-e^{-1}$ and an absolutely continuous part on $(0,1)\times(1,\infty)$ given by 
$$\P(M>x,Q>y)=\exp\left(-\frac{y^\gamma}{(1-x)^{\gamma-1}}\right)\qquad (x,y)\in[0,1)\times[1,\infty)$$
so that $\P(M>0,Q>1)=e^{-1}$. For $x>1$ we have
$$f(x)=-x\log\P(M>1-1/x)=x^\gamma\qquad\mbox{ and }\qquad k(x)=-x\log\P(Q>x)=x^\gamma.$$
If $M$ and $Q$ were independent, then we would have $h_{ind}\in \mathcal{R}_{\gamma^2/(2\gamma-1)}$. 
However, in our case they are not independent and it is easy to see that for any $x, t\geq 1$, 
$$-t\log\P\left(\frac{1}{1-M}>t,Q>\frac{x}{t}\right)=\max\{x,t\}^\gamma$$
so that $h(x)=x^\gamma$ for $x>1$ and 
$$h(x)=-t\log\P\left(\frac{1}{1-M}>t,Q>\frac{x}{t}\right)$$
for any $t=t(x)\in[1,x]$.
\end{remark}

\begin{remark}\label{R12}
If 
$$R=\sum_{k=1}^\infty M_1\cdot \ldots\cdot M_{k-1} Q_k,$$
then
\begin{align*}
R\geq R^{(1)}:=\sum_{k=1}^\infty m_-^{k-1} Q_k
\intertext{and (assume that $q_->0${)}}
R\geq R^{(2)}:=\sum_{k=1}^\infty M_1\cdot \ldots\cdot M_{k-1} q_-.
\end{align*}
Let $f$ and $k$ be defined as in Theorem \ref{propH} and assume that $f\in\mathcal{R}_r$ and $k\in\mathcal{R}_\alpha$ with $r,\alpha>1$. 
We have
$$\frac{\log\E e^{z R^{(1)}}}{\log\E e^{z Q}}=\sum_{k=1}^\infty \frac{\log\E e^{z m_-^{k-1} Q}}{\log\E e^{z Q}}.$$
Using regular variation of $\log\E e^{z Q}\sim k^\ast(z)$ and Potter bounds (\cite[Theorem~1.5.6]{BGT89}), we may pass with the limit under the sum to obtain
$$\lim_{z\to\infty}\frac{\log\E e^{z R^{(1)}}}{k^\ast(z)}=\sum_{k=1}^\infty m_-^{(k-1)\beta}=\frac{1}{1-m_-^\beta}.$$
Thus, by Kasahara's Theorem
\begin{align*}
\log\P(R>x)\geq\log\P(R^{(1)}>x)\sim -\sup_{z>0}\{zx-\frac{1}{1-m_-^\beta}k^\ast(z)\}=-\frac{k((1-m_-^\beta)x)}{1-m_-^\beta}.
\end{align*}
On the other hand, by \cite{BK17} we have
\begin{align*}
\log\P(R>x)\geq\log\P(R^{(2)}>x)\sim -\left(\frac{r}{r-1}\right)^{r-1}f\left(\frac{x}{q_-}\right).
\end{align*}
which gives by Theorem \ref{propH} e)
$$\liminf_{x\to\infty}\frac{\log\P(R>x)}{h_{counter}(x)}\geq -C$$
for some $C>0$.
In the next {s}ection we will give more accurate lower bound.
\end{remark}

\section{Lower bound}\label{secLower}
By {Theorem} \ref{propH} a) we know that there exists a function $t$ such that 
\begin{align}\label{htht}
h(x)=-t(x)\log\P\left(\frac{1}{1-M}>t(x),Q>\frac{x}{t(x)}\right)+o(1),
\end{align}
however function $t$ is not unique. Eye opener example was introduced in Remark \ref{afterH}, where we had 
$$h(x)=-t\log\P\left(\frac{1}{1-M}>t,Q>\frac{x}{t}\right)=x^\gamma \qquad \mbox{for all }t\in[1,x].$$

Below we present a lower bound for logarithmic asymptotics of the tail of $R$. Rate of convergence is described by the {regularly varying} function $h$, while the constant depends on the index of $h$ and the limit of a function $t$. If there is no uniqueness of function $t$, then the following result holds true for any such function provided that it converges to a limit {at infinity}.
\begin{thm}\label{lower}
Assume \eqref{MQpos}.
Assume that {function $h$ defined in \eqref{defh} belongs to $\mathcal{R}_\gamma$} with $\gamma\in[1,\infty]$.
If $\gamma=\infty$, assume additionally that $h\in\Gamma\subset \mathcal{R}_{\infty}$.

Finally, assume that $h$ is such that \eqref{htht} holds for {a function $t$} with
$\lim_{x\to\infty}t(x)=t_\infty\in(1,\infty]$.

Then,
\begin{align}\label{super}
\liminf_{x\to\infty} \frac{\log\P(R> x)}{h(x)}\geq 
-c_{t_\infty,\gamma},
\end{align}
where $c_{t,\gamma}$ is a finite positive constant given below; if $t\in(1,\infty)$ and $\gamma\in(1,\infty)$, then
\begin{align}
c_{t,1}&=c_{\infty,1}=1,\label{gamma1} \\
c_{t,\gamma}&=\left[t\left\{1-\left(1-\frac{1}{t} \right)^{\gamma/(\gamma-1)}\right\} \right]^{\gamma-1}, \label{BIGeq2} \\
\intertext{{otherwise,}}
c_{\infty,\gamma}&= \left(\frac{\gamma}{\gamma-1}\right)^{\gamma-1} \label{tinfty1},\\
c_{\infty,\infty}&=e \label{tinfty2},\\
c_{t,\infty}&=\left(1+\frac{1}{t}\right)^{1+t} \label{allinfty}.
\end{align}
\end{thm}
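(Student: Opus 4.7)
My plan is to make concrete the lower bound from the introduction of Section~\ref{secLower}: for any $n$ and any admissible $(\delta_k,q_k)_{k=1}^n$ satisfying $\sum_{k=1}^n q_k\prod_{j<k}(1-\delta_j)\geq x$, one has
\begin{align*}
\log\P(R>x)\geq\sum_{k=1}^n\log\P(M>1-\delta_k,\, Q>q_k).
\end{align*}
I will choose the pairs so that each summand is, up to $o(h(x))$, equal to $-h(y_k)/t_\infty$ for a sequence of scales $(y_k)$ that nearly minimizes $\sum h(y_k)/t_\infty$ under the induced constraint; then I will identify the minimum with the explicit constant $c_{t_\infty,\gamma}$.

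\textbf{Main case $t_\infty\in(1,\infty)$, $\gamma\in(1,\infty)$.} Fix $T$ close to $t_\infty$, set $\delta_k\equiv 1/T$ and $q_k=y_k/T$. By Theorem~\ref{propH}(a) and the assumption $t(y)\to t_\infty\approx T$, each summand satisfies $-\log\P(M>1-1/T,\,Q>y_k/T)=h(y_k)/T+o(1)$ for large $y_k$. The constraint becomes $\sum y_k(1-1/T)^{k-1}\geq xT$, so the task reduces to minimizing $\sum h(y_k)$ on the set $\{\sum y_k a^{k-1}\geq xT\}$ with $a:=1-1/T$. Writing $h(y)=y^\gamma L(y)$ with $L$ slowly varying and applying a Lagrange-multiplier calculation to the leading $y^\gamma$-part gives the optimizer $y_k^\star=\mu\,a^{(k-1)/(\gamma-1)}$; Potter's bounds allow one to replace $L(y_k^\star)$ by $L(x)$ uniformly across $k$, and summing a geometric series yields, as $n\to\infty$,
\begin{align*}
\sum_{k=1}^{n}h(y_k^\star)\sim h(x)\,T^{\gamma}\bigl(1-(1-1/T)^{\gamma/(\gamma-1)}\bigr)^{\gamma-1}.
\end{align*}
Dividing by $T$ produces exactly $c_{T,\gamma}\,h(x)$ with $c_{T,\gamma}$ given by \eqref{BIGeq2}; feeding this into the general lower bound and letting $T\to t_\infty$ proves \eqref{super}.

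\textbf{Boundary cases.} For $\gamma=1$ the Lagrangian degenerates (the geometric progression collapses) and the minimum concentrates on a single coordinate $y_1\approx xT$, giving $c_{t,1}=c_{\infty,1}=1$. For $t_\infty=\infty$ I let $T=T(x)\to\infty$ slowly enough that $T\approx t(y_k)$ remains a valid near-optimizer of the $\inf$ defining $h(y_k)$ for the relevant scales; the same Lagrangian calculation then applies and $c_{T,\gamma}\to(\gamma/(\gamma-1))^{\gamma-1}$. For $\gamma=\infty$ (i.e.\ $h\in\Gamma$ with auxiliary function $g$), the geometric spacing of the $y_k$'s is replaced by an arithmetic one at the scale of $g$: the defining relation $h(x+ug(x))/h(x)\to e^u$ makes a direct (non-Lagrangian) optimization produce the exponential constants $c_{t,\infty}=(1+1/t)^{1+t}$ and $c_{\infty,\infty}=e$.

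\textbf{Main obstacle.} The principal difficulty is handling the slowly-varying prefactor $L$ uniformly across the geometric cascade of scales produced by the Lagrangian while simultaneously controlling the truncation error as $n\to\infty$; Potter's bounds together with the geometric decay $a^{(k-1)/(\gamma-1)}$ of the optimizer make this manageable but technical. A secondary difficulty is unifying the boundary regimes in one argument, since the rapid-variation case $\gamma=\infty$ genuinely requires a different parametrization via the auxiliary function rather than geometric scaling, and the limits $t_\infty=\infty$ and $\gamma\to\infty$ do not commute in the formulas.
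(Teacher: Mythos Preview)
Your Lagrangian strategy for the main case $t_\infty\in(1,\infty)$, $\gamma\in(1,\infty)$ is essentially the paper's, but your choice $\delta_k\equiv 1/T$ for a \emph{fixed} $T$ creates a direction problem. The only information available is \eqref{htht}, which says that near-equality in the definition of $h$ holds at the particular value $t(y)$; for any other $T$ the infimum definition yields only $-T\log\P(M>1-1/T,\,Q>y_k/T)\geq h(y_k)$, i.e.\ $\log\P(\cdots)\leq -h(y_k)/T$, which is the wrong inequality for lower-bounding $\sum_k\log\P(\cdots)$. The fact that $T$ is numerically close to $t(y_k)$ does not help without a continuity assumption on $t\mapsto -t\log\P(M>1-1/t,\,Q>y/t)$ that you do not have. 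The fix (and the paper's choice) is to take $\delta_k=1/t(y_k)$, $q_k=y_k/t(y_k)$; then \eqref{htht} directly gives $\log\P(M>1-\delta_k,\,Q>q_k)\geq -(1+\varepsilon)h(y_k)/t(y_k)$, and the rest of your Lagrange-multiplier computation goes through with $t(y_k)\to t_\infty$ replacing $T$ in the limit.

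For $t_\infty=\infty$, your plan to ``let $T=T(x)\to\infty$ slowly enough'' inherits the same direction problem and adds a second one: with $a=1-1/T(x)\to 1$ the geometric cascade does not stabilize at any fixed $n$, so a finite-$n$ Lagrangian followed by $T\to\infty$ does not close. The paper treats $t_\infty=\infty$ (for all $\gamma>1$, including $\gamma=\infty$) and also $t_\infty<\infty$, $\gamma=\infty$ by a genuinely different argument: it builds a recursive increasing sequence $x_n\to\infty$ with increments $x_n-x_{n-1}=\frac{c}{t(y_n)}\frac{h(x_{n-1})}{h'(x_{n-1})}$, bounds $\log\P(R>x_n)$ via the event $\bigcap_k\{M_k>1-\delta_k,\,Q_k>q_k\}$, and then applies the Stolz--Ces\`aro theorem to $\sum_{k\le n}\log\P(\cdots)/h(x_n)$. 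Your ``arithmetic spacing at the scale of $g$'' for $h\in\Gamma$ is the right intuition, but it is this recursive-plus-Stolz--Ces\`aro device that turns it into a proof; the final optimization is over the single parameter $c$ rather than over an $n$-tuple.
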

	
	\begin{example}
		Let us consider a perpetuity $R$ generated by $(M,Q)$ such that 
$\P(M=m)=1$ with $m\in(0,1)$ and $x\mapsto -\log\P(Q>x)=:k(x)\in\mathcal{R}_\alpha$ with $\alpha>1$. Then
	we have	$t(x)=t_\infty=\frac{1}{1-m}$ and 
		$$h(x)=-t_\infty\log\P(Q> x/t_\infty)\sim t_\infty^{1-\alpha}k(x).$$	
On the other hand, (by calculations from Remark \ref{R12})
$$\log\P(R>x)\sim -(1-m^\beta)^{\alpha-1}k(x) \sim -(1-m^\beta)^{\alpha-1} t_\infty^{\alpha-1} h(x)$$
with $\beta=\alpha/(\alpha-1)$. Finally, we see that
$$(1-m^\beta)^{\alpha-1} t_\infty^{\alpha-1}=c_{t_\infty,\gamma}$$%\left[t_\infty\left\{1-\left(1-\frac{1}{t_\infty}  \right)^{\gamma/(\gamma-1)}\right\} \right]^{\gamma-1},$$
where $\gamma=\alpha$. This means that the constant obtained in \eqref{BIGeq2} is optimal.
	\end{example}
	
\begin{proof}[Proof of Theorem~\ref{lower}]
Without loss of generality we  may assume that $h$ is differentiable and, if $\gamma>1$, ultimately convex. {For $\gamma\in[1,\infty)$ use Smooth Variation Theorem, for $\gamma=\infty$ use arguments from page 5 in \cite{BK17}.}
\begin{description}
\item[Case $t_\infty<\infty$ and $\gamma=1$]
Observe that on the set 
$$\bigcap_{k=1}^n \{ M_k>1-\delta, Q_k>q\}$$
we have
$$R\geq \sum_{k=1}^n M_1\cdot\ldots\cdot M_{k-1}Q_k> q \frac{1-(1-\delta)^n}{\delta},$$
which means that for any $\delta\in(0,1)$, $q>0$ and $n\in\mathbb{N}$ we have
\begin{align}\label{firstcase}
\log\P\left(R>q \frac{1-(1-\delta)^n}{\delta}\right) \geq \log\P\left( \bigcap_{k=1}^n \{ M_k>1-\delta, Q_k>q\}\right)=n\log\P(M>1-\delta,Q>q).
\end{align}
%By Theorem \ref{propH} a), there exists a function $t$ such that 
%\begin{align}\label{hdef2}
%h(x)\sim-t(x)\log\P\left(M>1-\frac{1}{t(x)},Q> \frac{x}{t(x)}\right).
%\end{align}
For given $x>0$, set 
$$\delta=\delta(x)=\frac{1}{t(x)}\qquad\mbox{and}\qquad q=q(x)=\frac{x}{t(x)}\qquad\mbox{and}\qquad n=1$$%\qquad\mbox{and}\qquad n=\left\lfloor \alpha t(x)\right\rfloor,$$
 so that
 $$\log\P(M>1-\delta(x),Q>q(x))\sim -\frac{h(x)}{t_\infty}$$
 and
 $$q \frac{1-(1-\delta)^n}{\delta}=\frac{x}{t(x)}.$$

%\textbf{Case $\gamma=1$, subcase $t_\infty<\infty$.}
Then, \eqref{firstcase} gives
$$\frac{\log\P\left(R>\frac{x}{t(x)}\right)}{h\left(\frac{x}{t(x)}\right)}\geq 
\frac{\log\P(M>1-\delta(x),Q>q(x))}{h(x)}\frac{h(x)}{h\left(\frac{x}{t(x)}\right)}\sim -\frac{1}{t_\infty} \frac{1}{1/t_\infty}=-1.$$
We will show that this implies $\liminf_{x\to\infty}{\log\P\left(R>x\right)}/{h\left(x\right)}\geq -1$.
Let {$x_0$} be such that $t(x)/t_\infty\in(1-\eps,1+\eps)$ for {$\eps\in(0,1)$ and} all {$x>x_0$}. Then $x/(t_\infty(1+\eps))\leq x/t(x)\leq x/(t_\infty(1-\eps))$ and
\begin{align}\label{lowerTg}
\frac{\log\P\left(R>\frac{x}{t(x)}\right)}{h\left(\frac{x}{t(x)}\right)}\leq \frac{\log\P\left(R>\frac{x}{t_\infty(1+\eps)}\right)}{h\left(\frac{x}{t_\infty(1-\eps)}\right)}
\end{align}
for {$x>x_0$}, thus
\begin{align*}
\liminf_{x\to\infty} \frac{\log\P\left(R>x\right)}{h\left(x\right)}=\liminf_{x\to\infty} \frac{\log\P\left(R>\frac{x}{t_{\infty}(1+\eps)}\right)}{h\left(\frac{x}{t_\infty(1+\eps)}\right)} 
\geq \liminf_{x\to\infty} \frac{\log\P\left(R>\frac{x}{t(x)}\right)}{h\left(\frac{x}{t(x)}\right)} \frac{h\left(\frac{x}{t_\infty(1-\eps)}\right)}{h\left(\frac{x}{t_\infty(1+\eps)}\right)} \geq
-1\cdot\tfrac{1+\eps}{1-\eps}
\end{align*}
by \eqref{lowerTg} and regular variation of $h$. Passing with $\eps\to0$, we obtain the first part of \eqref{gamma1}. 

%%%%%%%%%%%%%%%%%%%%%%%%%%%%%%%%%%%%%%%%%%%%%
%%%%%%%%%%%%%%%%%%%%%%%%%%%%%%%%%%%%%%%%%%%%%
%%%%%%%%%%%%%%%%%%%%%%%%%%%%%%%%%%%%%%%%%%%%%

\item[Case $t_\infty=\infty$ and $\gamma=1$]
We proceed similarly as in the previous case. For arbitrary $\alpha>0$ set
$$\delta=\frac{1}{t(x)}\qquad\mbox{and}\qquad q=\frac{x}{t(x)}\qquad\mbox{and}\qquad n=\left\lfloor \alpha t(x)\right\rfloor$$
in \eqref{firstcase} to obtain for any $x>0$,
$$\frac{\log\P\left(R>x\left(1-\left(1-\tfrac{1}{t(x)}\right)^{\left\lfloor \alpha t(x)\right\rfloor}\right)\right)}{h\left(x\left(1-\left(1-\tfrac{1}{t(x)}\right)^{\left\lfloor \alpha t(x)\right\rfloor}\right)\right)} \geq
\frac{n\log\P(M>1-\delta,Q>q)}{h(x)}\frac{h(x)}{h\left(x\left(1-\left(1-\tfrac{1}{t(x)}\right)^{\left\lfloor \alpha t(x)\right\rfloor}\right)\right)}.$$
Since $t(x)\to\infty$ as $x\to\infty$, by regular variation of $h$, we see that the right hand side converges to 
$$-\frac{\alpha}{1-e^{-\alpha}}.$$
Using similar approach as in the case $t_\infty<\infty$, we show that
$$\liminf_{x\to\infty}\frac{\log\P(R>x)}{h(x)}\geq -\frac{\alpha}{1-e^{-\alpha}}.$$
Passing to the limit with $\alpha\to0$, we obtain the second part of \eqref{gamma1}. 

%%%%%%%%%%%%%%%%%%%%%%%%%%%%%%%%%%%%%%%%%%%%%
%%%%%%%%%%%%%%%%%%%%%%%%%%%%%%%%%%%%%%%%%%%%%
%%%%%%%%%%%%%%%%%%%%%%%%%%%%%%%%%%%%%%%%%%%%%

\item[Case $t_\infty<\infty$ and $\gamma\in(1,\infty)$]

For given $n\in\mathbb{N}$, consider sequences $(\delta_k)_{k=1}^n$ and $(q_k)_{k=1}^n$ satisfying
\begin{align}\label{subject}
x{\leq}\sum_{k=1}^n(1-\delta_1)\cdot\ldots\cdot(1-\delta_{k-1}) q_k.
\end{align}
Then, we have
$$\log\P(R>x)\geq \sum_{k=1}^n\log\P(M>1-\delta_k,Q>q_k)$$
and so
$$
\frac{\log\P(R>x)}{h(x)} \geq  \sum_{k=1}^n \frac{\log\P(M>1-\delta_k,Q>q_k)}{h(x)}.
$$
Set {for $k=1,\ldots,n$}
$$y_k={u_k}x\quad\mbox{and}\quad\delta_k=\frac{1}{t(y_k)}\quad\mbox{and}\quad q_k= \frac{y_k}{t(y_k)}{,}$$
{where $u_1,\ldots,u_n$ are some positive constants }
{such that (compare with \eqref{subject})}
\begin{align}\label{subjectX}
1{\leq}\sum_{k=1}^n \pi_{k}(x){u_k},
\end{align}
where
$$\pi_{k}(x)=(1-\delta_1)\cdot\ldots\cdot(1-\delta_{k-1})\frac{1}{t(y_k)}\to \left(1-\frac{1}{t_\infty}\right)^{k-1}\frac{1}{t_\infty}\qquad\mbox{as }x\to\infty,$$
since $y_i\to\infty$ for $i=1,\ldots,{n}$.
{Passing as $x\to\infty$ in the right hand side of \eqref{subjectX} we obtain
\begin{align}\label{RHSu}\frac{1}{t_\infty} \sum_{k=1}^n \left(1-\frac{1}{t_\infty}\right)^{k-1}u_k.\end{align}
We will choose $(u_k)_k$ in such a way that the above expression is strictly greater then $1$ and this will ensure that \eqref{subjectX} holds for large $x$.
Let us consider 
\begin{align}
\label{defuk}
u_k=t_{\infty}(1-t_\infty^{-1})^{1-k} A B^{k-1},\qquad k=1,\ldots,n
\end{align}
for positive $A$ and $B\in(0,1)$. Inserting it into \eqref{RHSu} we get
$$A\frac{1-B^n}{1-B}.$$
If additionally $A>1-B$ there exists $N$ such that for all $n\geq N$ the above expression is strictly larger than $1$. Thus, \eqref{subjectX} is established for such a choice.
}
Moreover, by the definition of $h$ and function $t$, we have for any $\eps>0$ {and $x>0$},
$$h(x)\leq-t(x)\log\P(M>1-1/t(x),Q>x/t(x))\leq (1+\eps)h(x)$$
and so
\begin{align}\label{defa}
\frac{\log\P(R>x)}{h(x)} \geq
-(1+\eps)\sum_{k=1}^n \frac{h\left( {u_k}x\right)}{t(y_k)h(x)}.
\end{align}
{Taking $\liminf_{x}$ of both sides of \eqref{defa}, we obtain for any $n\geq N$
$$\liminf_{x\to\infty}\frac{\log\P(R>x)}{h(x)}\geq-\frac{1+\eps}{t_\infty}\sum_{k=1}^n u_k^\gamma $$
and passing with $n\to\infty$ along with the substitution of \eqref{defuk} we obtain
$$\liminf_{x\to\infty}\frac{\log\P(R>x)}{h(x)}\geq-(1+\eps)t_{\infty}^{\gamma-1} \frac{A^\gamma}{1-\left(B \frac{t_{\infty}}{t_{\infty}-1}\right)^\gamma}.$$
The above inequality holds for any $A>1-B\in(0,1)$. Let us set $A=1-B+\eps$. Then the expression on the right hand side above attains its supremum for 
$$B_{\eps}=(1+\eps)^{1/(1-\gamma)}\left(1-\frac{1}{t_{\infty}}\right)^{\gamma/(\gamma-1)}$$
and for such $B$ this supremum equals 
$$-t_{\infty}^{\gamma-1}(1-B_{\eps}+\eps)^{\gamma-1}.$$
Letting  $\eps\to0$ we obtain 
\eqref{BIGeq2}. 
}

%%%%%%%%%%%%%%%%%%%%%%%%%%%%%%%%%%%%%%%%%%%%%
%%%%%%%%%%%%%%%%%%%%%%%%%%%%%%%%%%%%%%%%%%%%%
%%%%%%%%%%%%%%%%%%%%%%%%%%%%%%%%%%%%%%%%%%%%%

\item[Case $t_\infty=\infty$ and \mbox{$\gamma\in(1,\infty]$}]
Let {$x_0=0$ and $R_0=Q_0$} and define a random sequence $(R_n)_{n\geq1}$ and a sequence of scalars $(x_n)_{n\geq1}$ through
$$R_n=M_n R_{n-1}+Q_n\qquad \mbox{and}\qquad x_n=(1-\delta_n)x_{n-1}+q_n,\qquad  n\geq1,$$
where $(M_n,Q_n)_{n\geq{0}}$ is an i.i.d. sequence of the generic element $(M,Q)$ and $(\delta_n)_{n\geq1}$ and $(q_n)_{n\geq1}$ are scalar sequences yet to be determined.

Since $M$ and $Q$ are assumed to be a.s. non-negative and 
{$$R=\sum_{k=1}^\infty Q_k\prod_{j=1}^{k-1}M_j\geq\sum_{k=1}^{n+1}Q_k\prod_{j=1}^{k-1}M_j\stackrel{d}{=}R_n,$$}
we have 
$$\P(R> x)\geq \P(R_n> x).$$
Moreover, since $(M_n,Q_n)$ and $R_{n-1}$ are independent, we have
\begin{align}\label{kuku}\begin{split}
\P(R_n> x_n)&\geq \P(M_n R_{n-1}+Q_n> (1-\delta_n)x_{n-1}+q_n,M_n> 1-\delta_n, Q_n> q_n) \\
&\geq \P(M_n> 1-\delta_n, Q_n> q_n)\P(R_{n-1}> x_{n-1})\geq \prod_{k=1}^n \P(M> 1-\delta_k, Q> q_k){\P(Q>0)}
\end{split}\end{align}
{and $\P(Q>0)>0$.}
If $(x_n)_n$ is strictly increasing and if $x_{n-1}< x\leq x_{n}$, then 
$$\frac{\log\P(R>x)}{h(x)}\geq\frac{\log\P(R>x_n)}{h(x_{n-1})}$$ 
and therefore, if additionally $(x_n)_n$ is divergent and $h(x_n)/h(x_{n-1})$ has a limit as $n\to\infty$, we have
\begin{align*}
\liminf_{x\to\infty} \frac{\log\P(R>x)}{h(x)}
&\geq
\liminf_{n\to\infty} \frac{\log\P(R> x_n)}{h(x_{n-1})}  \\
&\stackrel{\eqref{kuku}}{\geq }
\liminf_{n\to\infty} \frac{h(x_n)}{h(x_{n-1})}\frac{\sum_{k=1}^n \log\P(M> 1-\delta_k, Q> q_k)}{h(x_n)}  \\
& \geq
\lim_{n\to\infty}\frac{h(x_n)}{h(x_{n-1})} \liminf_{n\to\infty} \frac{\sum_{k=1}^n \log\P(M> 1-\delta_k, Q> q_k)}{h(x_n)}  \\
& \geq
\lim_{n\to\infty}\frac{h(x_n)}{h(x_{n-1})} \liminf_{n\to\infty} \frac{\log\P(M> 1-\delta_n, Q> q_n)}{h(x_n)-h(x_{n-1})}, 
\end{align*}
where the last inequality follows by the Stoltz--Ces\`{a}ro theorem {(recall that $h(x)\to\infty$ as $x\to\infty$)}.

We will choose now sequences $(\delta_n)_{n\geq1}$ and $(q_n)_{n\geq1}$ in such a way that $x_n\to\infty$ and above limit is finite and negative.

Let us set
	$$I_n:= \frac{\log\P(M> 1-\delta_n,Q> q_n)}{h(x_n)-h(x_{n-1})}$$
	and let 
	$$\delta_n=\frac{1}{t(y_n)}\qquad\mbox{ and }\qquad q_n=\frac{y_n}{t(y_n)},$$ 
	where 
	$$y_n=x_{n-1}+c\frac{h(x_{n-1})}{h^\prime(x_{n-1})}$$
	for some positive constant $c$.
Inserting {the} above into the definition of $(x_n)_{n\geq1}$ we obtain
\begin{align}\label{difx}
x_n-x_{n-1}=q_n-\delta_n x_{n-1}=\frac{c}{t(y_n)}\frac{h(x_{n-1})}{h^\prime(x_{n-1})}.
\end{align}

		Since the right hand side of \eqref{difx} is positive, $x_n$ is strictly increasing. This means that $x_n$ has a limit, possibly infinite. Assume that $p:=\lim_n x_n<\infty$. Then $y_n\to p+c\,h(p)/h^\prime(p)<\infty$ and, by \eqref{difx}, we see that 
		$$0=\lim_{n\to\infty}\frac{c}{t(y_n)}\frac{h(x_{n-1})}{h^\prime(x_{n-1})}=\frac{c\,h(p)}{h^\prime(p)}\lim_{n\to\infty}\frac{1}{t(y_n)}.$$
		But this is impossible, because for any finite $x>0$, $t(x)$ is finite (Theorem \ref{propH} a)). Thus $x_n\to\infty$.

Further,
	$$I_n=-C_n\frac{h(y_n)}{t(y_n)(h(x_n)-h(x_{n-1}))},$$
	where 
	$$C_n:=\frac{-t(y_n)\log\P\left(M> 1-\frac{1}{t(y_n)},Q> \frac{y_n}{t(y_n)}\right) }{h(y_n)}\to1\qquad\mbox{ as }n\to\infty.$$
Using convexity of $h$, we obtain
	$$I_n\geq - C_n\frac{h(y_n)}{t(y_n)(x_n-x_{n-1}) h^\prime(x_{n-1})}=- C_n\frac{h\left(x_{n-1}+c\frac{h(x_{n-1})}{h^\prime(x_{n-1})}\right)}{c h(x_{n-1})}.$$
	
	Letting $n\to\infty$, we have (see \eqref{Rg} and \eqref{Gamma})
	$$\liminf_{n\to\infty}I_n\geq \begin{cases}
	-\frac{1}{c}\left(\frac{c+\gamma}{\gamma}\right)^\gamma & \mbox{if }\gamma\in[1,\infty) \\
	-\frac{e^c}{c}& \mbox{if }\gamma=\infty.	
	\end{cases}
	$$
	If $\gamma\in(1,\infty)$, then the supremum of the right hand side above is attained at $c=\gamma/(\gamma-1)$ and this supremum equals $-(\gamma/(\gamma-1))^{\gamma-1}$. 
  For $\gamma=\infty$, the supremum is attained at $c=1$ and then equals $-e$.
It is left to show that
$\lim_{n\to\infty}\frac{h(x_n)}{h(x_{n-1})}=1$.
We have
$$
\frac{h(x_n)}{h(x_{n-1})}=\frac{h\left(x_{n-1}+\frac{c}{t(y_n)}\frac{h(x_{n-1})}{h^\prime(x_{n-1})}\right)}{h(x_{n-1})}\to1,
$$
since $\lim_{n\to\infty} t(y_n)=\infty$ (the convergence in \eqref{Gamma} is uniform; see \cite[Proposition~3.10.2]{BGT89}).

%%%%%%%%%%%%%%%%%%%%%%%%%%%%%%%%%%%%%%%%%%%%%
%%%%%%%%%%%%%%%%%%%%%%%%%%%%%%%%%%%%%%%%%%%%%
%%%%%%%%%%%%%%%%%%%%%%%%%%%%%%%%%%%%%%%%%%%%%

\item[Case $t_\infty<\infty$ and $\gamma=\infty$]
Proceeding in the same way as in the previous case, we obtain
$$\liminf_{x\to\infty} \frac{\log\P(R>x)}{h(x)}\geq \lim_{n\to\infty}\frac{h(x_n)}{h(x_{n-1})} \liminf_{n\to\infty} I_n,$$
where $x_n\to\infty$ and
\begin{align*}
	I_n= -\frac{C_n}{t(y_n)}\frac{h(x_{n-1}+c\frac{h(x_{n-1})}{h^\prime(x_{n-1})})}{h\left(x_{n-1}+\frac{c}{t(y_n)}\frac{h(x_{n-1})}{h^\prime(x_{n-1})}\right)-h(x_{n-1})},
\end{align*}
where $C_n\to1$ as $n\to\infty$. Thus, using \eqref{Gamma}, we obtain
$$
\lim_{n\to\infty} I_n = -\frac{e^c}{t_\infty\left(e^{c/t_\infty}-1\right)}
$$
and
$$
\lim_{n\to\infty}\frac{h(x_n)}{h(x_{n-1})}=	e^{c/t_\infty}.
$$
Thus, 
$$\liminf_{x\to\infty} \frac{\log\P(R>x)}{h(x)}\geq-\inf_{c>0}\left\{ e^{c/t_\infty}\frac{e^c}{t_\infty\left(e^{c/t_\infty}-1\right)}\right\}=-\left(1+\frac{1}{t_\infty}\right)^{t_\infty+1}.$$
\end{description}
\end{proof}

\begin{remark}
In example introduced in Remark \ref{afterH}, we have $h\in\mathcal{R}_\gamma$ with $\gamma\in(1,\infty)$ and
$$h(x)\sim-t\log\P\left(\frac{1}{1-M}>t,Q>\frac{x}{t}\right)$$
for any $t\in(1,\infty)$, so that Theorem \ref{lower} gives us for any $t>1$
$$\liminf_{x\to\infty}\frac{\log\P(R>x)}{h(x)}\geq -\left[t\left\{1-\left(1-\frac{1}{t} \right)^{\gamma/(\gamma-1)}\right\} \right]^{\gamma-1}.$$
We have 
$$\inf_{t>1}\left[t\left\{1-\left(1-\frac{1}{t} \right)^{\gamma/(\gamma-1)}\right\} \right]^{\gamma-1}=1$$ 
so that
$$
\liminf_{x\to\infty}\frac{\log\P(R>x)}{h(x)}\geq -1.
$$
\end{remark}

Below, we give an example of two perpetuities of which generators have the same marginals, while perpetuities have logarithmic tails of different asymptotic order.
\begin{example}\label{EXQQQ}
Let $X=(X(t))_{t\geq 0}$ be a drift-free non-killed subordinator
with Laplace exponent $\Phi(s)=-\log\E e^{-sX(1)}$, $s\geq 0$ and
$T$ an exponentially distributed random variable of parameter $1$ which is independent of $X$. The random variable $R:=\int_0^\infty
e^{-X(t)}{\dd}t$ is a perpetuity generated by 
$$(M,Q):=\left(e^{-X(T)}, \int_0^Te^{-X(t)}{\dd}t\right).$$ 
One is able to give semi-explicit formula for joint moments of $M$ and $Q$ (see formula (2.6) in \cite{IM08}).

Assume now that $\Phi\in\mathcal{R}_\alpha$ with $\alpha\in(0,1)$.
Then, it is proven in \cite{Riv03} that
\begin{align}\label{EX}
-\log\P\{R>x\}\sim(1-\alpha)\Psi(x),\quad t\to\infty
\end{align}
with $\Psi(x):=\inf\{s>0: \frac{s}{\Phi(s)}>x\}$.

If one takes the L\'{e}vy measure of $X$ of the form
$$\nu(\dd t)=\frac{e^{-t/\alpha}}{(1-e^{-t/\alpha})^{\alpha+1}}I_{(0,\infty)}(t)\dd t,$$ 
then
$$\Phi(s)=\int_{[0,\infty)} (1-e^{-st})\nu(\dd t)=\frac{\Gamma(1-\alpha)\Gamma(1+\alpha s)}{\Gamma(1+\alpha(s-1))}-1\sim \alpha^\alpha \Gamma(1-\alpha)s^{\alpha}$$
and one can find marginal distributions of $(M,Q)$ (see Example 2.1.2 in \cite{IksBook16}).
In this special case, $Q$ has Mittag-Leffler distribution with parameter $\alpha$ and $M^{1/\alpha}$ has beta distribution with parameters $1-\alpha$ and $\alpha$. 
This implies that 
$$f(x):=-x\log\P\left(M>1-\frac{1}{x}\right)\sim \alpha x\log x,$$
and (see e.g. \cite[Theorem 8.1.12]{BGT89})
$$k(x):=-\log\P(Q>x)\sim (1-\alpha)\alpha^{\alpha/(1-\alpha)}x^{1/(1-\alpha)},$$
that is, $k\in\mathcal{R}_{1/(1-\alpha)}$ and $f\in\mathcal{R}_1$. 

Let us consider now a perpetuity $R_{ind}$ generated by independent $M$ and $Q$ with the same distributions. One can show that the corresponding function $h_{ind}=(f^\ast\circ k^\ast)^\ast$ belongs to $\mathcal{R}_1$. Thus, by Theorem \ref{lower} we have 
$$-\log\P(R_{ind}>x)\lesssim C h_{ind}(x)$$
for some $C>0$.
On the other hand, \eqref{EX} implies that $x\mapsto-\log\P\{R>x\}$ is regularly varying at $\infty$ {with} index $(1-\alpha)^{-1}>1$ {($\Psi=\rho^\leftarrow$, where $\rho(s)=s/\Phi(s)\in\mathcal{R}_{1-\alpha}$)}.
Therefore, we obtain
$$\lim_{x\to\infty}\frac{\log\P(R_{ind}>x)}{\log\P(R>x)}=0.$$
\end{example}

\section{Upper bound}\label{secUpper}
In the {present} {s}ection we give asymptotic upper bounds for $\log\P(R>x)$ when $(M,Q)$ is negatively quadrant dependent (Theorem \ref{NQD}) and when $(M,Q)$ is dependent in an arbitrary way (Theorem \ref{THco}, which is the most important result of this {s}ection. 

Let us assume that
\begin{align}\label{kfdef}
k(x)=-\log\P(Q>x)\in\mathcal{R}_\alpha, \qquad f(x)=-x\log\P(M\geq 1-1/x)\in\mathcal{R}_{r}\qquad\mbox{with }\alpha,r>1.
\end{align}
Let $r^\ast$ and $\beta$ denote the conjugate numbers to $r$ and $\alpha$, respectively and denote
$$\gamma=\beta r^\ast/(\beta r^\ast-1).$$
Let $R_{co}$ and $R_{ind}$ denote perpetuities generated by comonotonic and independent $(M,Q)$, respectively, and let $h_{co}$ and $h_{ind}$ denote $h$ functions corresponding to these two cases. Recall that in Theorem \ref{propH} we have shown that 
\begin{align}\label{seqh}
(f^\ast\circ k^\ast)^\ast(x)\sim h_{ind}(x)\sim \frac{\alpha+r-1}{\alpha-1} \left(\frac{\alpha-1}{r}\right)^{r/(\alpha+r-1)}h_{co}(x).
\end{align}
\begin{thm}\label{THco}
	Assume \eqref{MQpos} and \eqref{kfdef}.
Then,
\begin{align}\label{co1}
\limsup_{x\to\infty}\frac{\log\P(R>x)}{h_{co}(x)} \leq -\left(\frac{\gamma}{\gamma-1}\right)^{\gamma-1}
\intertext{and}
\label{co}
\lim_{x\to\infty}\frac{\log\P(R_{co}>x)}{h_{co}(x)}= -\left(\frac{\gamma}{\gamma-1}\right)^{\gamma-1}.
\end{align}
\end{thm}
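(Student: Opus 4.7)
I would prove \eqref{co} first and then deduce \eqref{co1} by stochastic dominance.

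For the lower bound in \eqref{co}, I would apply Theorem \ref{lower} to $R_{co}$. By Theorem \ref{propH}(d), $h_{co}\in\mathcal{R}_\gamma$ with $\gamma=\alpha r/(\alpha+r-1)\in(1,\infty)$, and the associated function $t(x)\in\mathcal{R}_{\alpha/(\alpha+r-1)}$ diverges, so $t_\infty=\infty$. Formula \eqref{tinfty1} of Theorem \ref{lower} then delivers the required constant $(\gamma/(\gamma-1))^{\gamma-1}$.

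For the matching upper bound, the plan is to show $\psi_{co}(z):=\log\E e^{zR_{co}}\sim\frac{\gamma-1}{\gamma}h_{co}^\ast(z)$ and invoke Kasahara's Theorem \ref{Kas} to convert this into $-\log\P(R_{co}>x)\sim(\gamma/(\gamma-1))^{\gamma-1}h_{co}(x)$. Mirroring the structure of the proof of Theorem \ref{thmIND1}, I would start from the fixed-point identity $\E e^{zQ+\psi_{co}(zM)-\psi_{co}(z)}=1$. Convexity of $\psi_{co}$ immediately gives the easy direction $\E e^{zQ-z\psi_{co}'(z)(1-M)}\leq 1$. For the reverse, I would split at $m\in(0,1)$ as in the independent case; the new ingredient is that by comonotonicity one has the a.s.\ inclusion $\{M\leq m\}\subset\{Q\leq F_Q^{-1}(F_M(m))\}$, which makes the remainder term $e^{\psi_{co}(zm)-\psi_{co}(z)}\E e^{zQ}I_{M\leq m}$ exponentially negligible compared with $\psi_{co}(z)$. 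This yields $\log\E e^{zQ-z\psi_{co}'(z)(1-M)}\to 0$, and a bivariate Tauberian evaluation of the joint Laplace transform $\E e^{zQ-s(1-M)}$---for which the relevant joint tail is precisely the comonotonic one $\min\{\P(Q>y),\P(1/(1-M)>t)\}$ that defines $h_{co}$---pins down $z\psi_{co}'(z)$ in terms of $h_{co}^\ast$; regular variation of $h_{co}^\ast\in\mathcal{R}_{\gamma/(\gamma-1)}$ then fixes the multiplicative constant to $(\gamma-1)/\gamma$.

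With \eqref{co} in hand, \eqref{co1} follows via Lemma \ref{LB}: applied to the convex non-decreasing function $r\mapsto e^{tr}$ (for $t\geq 0$), it yields $\psi_R(t)\leq\psi_{co}(t)$ for every perpetuity $R$ whose generator has the same marginals as $(M,Q)$. Markov's inequality then gives $-\log\P(R>x)\geq\psi_R^\ast(x)\geq\psi_{co}^\ast(x)$, and the Kasahara translation of \eqref{co} identifies $\psi_{co}^\ast(x)\sim(\gamma/(\gamma-1))^{\gamma-1}h_{co}(x)$, which is exactly \eqref{co1}. The \emph{main obstacle} throughout is the joint Tauberian step: unlike the independent case, where $\E e^{zQ+\psi(zM)-\psi(z)}$ factors and the analysis splits into two univariate Tauberian problems, here one must handle the joint Laplace transform $\E e^{zQ-s(1-M)}$ as a genuinely two-dimensional object whose logarithmic asymptotics are governed precisely by the comonotonic joint tail defining $h_{co}$---which is what ultimately makes the answer come out in terms of $h_{co}$ rather than a purely marginal quantity.
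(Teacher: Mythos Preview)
Your lower bound for \eqref{co} via Theorem~\ref{lower} and your reduction of \eqref{co1} to an upper bound on $\psi_{co}$ via Lemma~\ref{LB} and Markov are both correct and match the paper. The difference---and the gap---lies in how you propose to control $\psi_{co}$ from above.

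You want to mimic the proof of Theorem~\ref{thmIND1}: sandwich $\E e^{zQ+\psi_{co}(zM)-\psi_{co}(z)}$ between two expressions involving $\E e^{zQ-z\psi_{co}'(\cdot)(1-M)}$ and then read off the growth of $z\psi_{co}'(z)$. But that argument worked in Theorem~\ref{thmIND1} precisely because the fixed-point identity \emph{factored} as $\E e^{\psi(zM)-\psi(z)}\cdot\E e^{zQ}=1$, giving two separate univariate Laplace transforms to which Theorems~\ref{Kas} and~\ref{deB} apply. In the comonotonic case there is no such factorisation, and your substitute---a ``bivariate Tauberian evaluation of $\E e^{zQ-s(1-M)}$''---is not a theorem available anywhere in the paper (or in the standard literature). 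You also face a circularity: to pass from $\psi_{co}'(zm)$ to $\psi_{co}'(z)$ in the splitting argument you need regular variation of $\psi_{co}'$, which is exactly what you are trying to establish. You correctly flag this joint-Laplace step as the main obstacle, but you have not actually cleared it.

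The paper sidesteps both difficulties by \emph{not} trying to identify $\psi_{co}$ asymptotically. Instead it introduces an explicit one-parameter family of test functions $\phi_B(z)=B\,(\lf^\ast\circ\lk^\ast)(z/B)$ and proves a comparison principle (Lemma~\ref{lem2}): if $\E e^{z\overline{Q}+\phi_B(z\overline{M})-\phi_B(z)}\le 1$ for large $z$, then $\psi_{co}\le\phi_B+C_B$. Since $\phi_B$ is known and smoothly regularly varying, verifying this reduces to a concrete Laplace-method estimate on the one-dimensional integral $\int_0^1 e^{zF_Q^{-1}(u)+\phi_B(zF_M^{-1}(u))-\phi_B(z)}\,du$ (the computation of $K_2(z)$ in the proof). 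This is still delicate---it is where the comonotonic joint tail genuinely enters---but it is a calculation with explicit functions, not an implicit equation for an unknown $\psi_{co}$. Letting $B\uparrow B_{co}$ then yields \eqref{co1} as an inequality, and \eqref{co} follows by combining with the lower bound from Theorem~\ref{lower}; the paper never proves the exact asymptotic $\psi_{co}(z)\sim\tfrac{\gamma-1}{\gamma}h_{co}^\ast(z)$ that you aim for directly.
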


If we additionally assume that $(M,Q)$ is negatively or positively quadrant dependent, then we can prove slightly stronger results.
\begin{thm}\label{NQD}
	Assume \eqref{MQpos} and \eqref{kfdef}.
\begin{itemize}
\item[\rm{(i)}]
 If $(M,Q)$ is negatively quadrant dependent, then
\begin{align}\label{NQD1}
\limsup_{x\to\infty}\frac{\log\P(R>x)}{h_{ind}(x)} \leq -\left(\frac{\gamma}{\gamma-1}\right)^{\gamma-1}.
\end{align}
\item[\rm{{(ii)}}]If $(M,Q)$ is positively quadrant dependent, then
\begin{multline}\label{PQDl}
-\left(\frac{\gamma}{\gamma-1}\right)^{\gamma-1}
\leq
\liminf_{x\to\infty}\frac{\log\P(R>x)}{h_{ind}(x)} \\
\leq 
\limsup_{x\to\infty}\frac{\log\P(R>x)}{h_{ind}(x)}
\leq 
-\frac{\alpha-1}{\alpha+r-1}\left(\frac{r}{\alpha-1}\right)^{r/(\alpha+r-1)} \left(\frac{\gamma}{\gamma-1}\right)^{\gamma-1}.
\end{multline}
\end{itemize}
\end{thm}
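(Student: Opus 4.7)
Both parts of the theorem hinge on a comparison with the reference perpetuity $R_{ind}$ whose generator $(M_{ind},Q_{ind})$ has the same marginals as $(M,Q)$ but is independent. For part~(i), the NQD condition
\[
\P(M\leq x,Q\leq y)\leq\P(M\leq x)\P(Q\leq y)=\P(M_{ind}\leq x,Q_{ind}\leq y)
\]
is precisely the hypothesis of Lemma~\ref{LB} with $(M',Q')=(M_{ind},Q_{ind})$, which therefore supplies $\E e^{tR}\leq\E e^{tR_{ind}}=:e^{\psi_{ind}(t)}$ for every $t>0$. The plan is then to invoke the Chernoff bound $\P(R>x)\leq e^{-tx+\psi_{ind}(t)}$ and optimize over $t>0$ to obtain $-\log\P(R>x)\geq\psi_{ind}^\ast(x)$. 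Kasahara's Theorem~\ref{Kas}, applied to $R_{ind}$, together with Theorem~\ref{thmIND1} and Theorem~\ref{propH}~c), identifies $\psi_{ind}^\ast(x)\sim-\log\P(R_{ind}>x)\sim\bigl(\gamma/(\gamma-1)\bigr)^{\gamma-1}h_{ind}(x)$, and dividing by $h_{ind}(x)$ delivers \eqref{NQD1}.

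The right-hand inequality of \eqref{PQDl} in part~(ii) uses no dependence assumption at all: Theorem~\ref{THco}, valid for any generator, gives $\limsup\log\P(R>x)/h_{co}(x)\leq-(\gamma/(\gamma-1))^{\gamma-1}$, and Theorem~\ref{propH}~d) provides $h_{co}(x)/h_{ind}(x)\to\tfrac{\alpha-1}{\alpha+r-1}\bigl(\tfrac{r}{\alpha-1}\bigr)^{r/(\alpha+r-1)}$; multiplying the two produces the claimed upper bound.

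The left-hand inequality of \eqref{PQDl} is where PQD actually enters. Fr\'{e}chet complementation converts PQD into $\P(M>1-\delta,Q>q)\geq\P(M>1-\delta)\P(Q>q)$, and I would insert this pointwise inequality into the construction from the proof of Theorem~\ref{lower} in the case $t_\infty=\infty$, $\gamma\in(1,\infty)$, taking as reference function $h_{ind}$ and as reference minimizer $t^{ind}$, the $t$ attaining the infimum in $h_{ind}(x)=\inf_{t\geq1}\{f(t)+tk(x/t)\}$. Setting $\delta_n=1/t^{ind}(y_n)$, $q_n=y_n/t^{ind}(y_n)$ and $y_n=x_{n-1}+c\,h_{ind}(x_{n-1})/h_{ind}^\prime(x_{n-1})$, the PQD inequality then yields
\[
\log\P(R>x_n)\geq\sum_{k=1}^n\log\P(M>1-\delta_k,Q>q_k)\geq-\sum_{k=1}^n\frac{h_{ind}(y_k)}{t^{ind}(y_k)},
\]
because $\log\P(M>1-\delta_k)+\log\P(Q>q_k)$ equals $-\bigl[f(t^{ind}(y_k))/t^{ind}(y_k)+k(y_k/t^{ind}(y_k))\bigr]=-h_{ind}(y_k)/t^{ind}(y_k)$ by the very definition of $h_{ind}$ and its minimizer. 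The Stolz--Ces\`{a}ro argument closing the proof of Theorem~\ref{lower} then runs verbatim with $h=h_{ind}$, $t=t^{ind}$ and $t_\infty=\infty$, and optimizing the free constant at $c=\gamma/(\gamma-1)$ returns $-c_{\infty,\gamma}=-(\gamma/(\gamma-1))^{\gamma-1}$.

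The delicate step is this last one: $h_{ind}$ and $t^{ind}$ must inherit the regularity ingredients used in the proof of Theorem~\ref{lower}, namely ultimate convexity and differentiability of $h_{ind}$, membership in $\mathcal{R}_\gamma$ with $\gamma>1$, and divergence $t^{ind}(x)\to\infty$. These are all packaged in Theorem~\ref{propH}~c), so the substitution should be essentially mechanical; the only subtlety is that we deliberately replace the generator's own (possibly larger and less regular) $h$ and $t$ by the independent-case reference, and must check throughout that the PQD inequality only goes in the direction favorable to the lower bound.
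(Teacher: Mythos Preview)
Your proposal is correct and follows essentially the same route as the paper: part~(i) via Lemma~\ref{LB}, the exponential Markov bound, and Kasahara/Theorem~\ref{thmIND1}; the upper bound in part~(ii) via Theorem~\ref{THco} combined with the ratio from Theorem~\ref{propH}~d); and the lower bound in part~(ii) by feeding the PQD inequality $\P(M>1-\delta_k,Q>q_k)\geq\P(M>1-\delta_k)\P(Q>q_k)$ into the argument of Theorem~\ref{lower} run with $h=h_{ind}\sim(f^\ast\circ k^\ast)^\ast\in\mathcal{R}_\gamma$. Your added discussion of the regularity of $h_{ind}$ and $t^{ind}$ needed to rerun the $t_\infty=\infty$, $\gamma\in(1,\infty)$ case is a welcome explicit check that the paper leaves implicit.
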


\begin{proof}[Proof of Theorem~\ref{NQD}]
Part \rm{(i)}. Since $r\mapsto \exp(zr)$ is convex, by Lemma~\ref{LB} we see that 
$$\E e^{z R}\leq \E e^{z R_{ind}},\qquad z\geq 0,$$
Let $\psi_{ind}(z):=\log\E \exp(zR_{ind})$.
By exponential Markov inequality 
$$\P(R>x)\leq \frac{\E e^{z R}}{e^{z x}}\leq \frac{\E e^{z R_{ind}}}{e^{z x}}.$$
After taking $\log$ and $\inf_{z>0}$ of both sides we arrive at
\begin{align*}
\log \P(R>x)\leq -\psi_{ind}^\ast(x).
\end{align*}
By Kasahara's Tauberian Theorem we conclude that $-\psi_{ind}^\ast(x)\sim -\log\P(R_{ind}>x)$ and thus {Theorem~\ref{thmIND1}} gives us the assertion.

Part \rm{(ii)}. The upper bound follows by Theorem~\ref{THco}.
The lower bound in \eqref{PQDl} is immediate if one looks into the proof of Theorem~\ref{lower}. 
By positive quadrant dependence, we have
$$\log\P(M>\delta_k,Q>q_k)\geq \log\left[\P(M>\delta_k)\P(Q>q_k)\right].$$
Thus, using above inequality and repeating all the steps of the proof of Theorem~\ref{lower} {(case $t_\infty=\infty$)} with $h(x)=(f^\ast\circ k^\ast)^\ast(x)\in\mathcal{R}_\gamma$, we arrive at the lower bound in \eqref{PQDl}.
\end{proof}

{For proving \eqref{co1} w}e will need the following Lemma, whose proof is postponed to {Section \ref{secProof}}.
\begin{lemma}\label{lem2}
	Assume that {there exists a function $\phi$ such that}
	$$I_{\phi}(z):=\E \exp(zQ+ \phi(z M)-\phi(z))\leq 1$$
	for large values of $z$. Then there exists a constant $C>0$ such that
	$$\E \exp(z R)\leq \exp(\phi(z)+C)$$
	for $z$ large enough.
\end{lemma}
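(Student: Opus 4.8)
The plan is to compare the perpetuity iterates $R_n$ with the deterministic iterates of the map $z\mapsto\phi(z)$ interpreted correctly through the fixed-point structure. Recall that $\E e^{zR}=\E e^{zQ+\psi(zM)}$ where $\psi(z)=\log\E e^{zR}$, so $\psi$ satisfies $\E\exp(zQ+\psi(zM)-\psi(z))=1$ for all $z$ in the domain of finiteness. The hypothesis gives a supersolution $\phi$ of this equation, i.e. $I_\phi(z)\le 1$ for $z\ge z_1$ say, and we want to conclude $\psi\le\phi+C$. First I would truncate: pick $z_1$ large enough that $I_\phi(z)\le1$ for $z\ge z_1$, and choose $C$ large enough that $\psi(z)\le\phi(z)+C$ holds on the compact interval $[0,z_1]$ — this is possible since $\psi$ is finite and continuous on all of $\RR$ (because $M_R$ is finite on $\RR$ under our standing assumptions \eqref{QMGF}, \eqref{Ml1}) and $\phi$ is bounded below on compacts (being convex, as it is a convex conjugate-type object, or at least we may assume it locally bounded). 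Then I would run an induction on the iteration \eqref{it} to transfer this bound to all of $[0,\infty)$.

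Concretely, let $\tilde\psi(z):=\phi(z)+C$ and I claim $\E e^{zR_n}\le e^{\tilde\psi(z)}$ for all $n$ and all $z\ge0$, provided $R_0\equiv 0$ (or any fixed bounded $R_0$, adjusting $C$). The base case $n=0$ is the choice of $C$ together with $\phi\ge$ something on $[0,\infty)$; more carefully one wants $\E e^{zR_0}=1\le e^{\tilde\psi(z)}$, so one needs $\tilde\psi\ge0$, which can be arranged by enlarging $C$. For the inductive step, conditioning on $(M_{n+1},Q_{n+1})$ and using independence of $R_n$,
\begin{align*}
\E e^{zR_{n+1}}=\E e^{z Q_{n+1}}\E e^{zM_{n+1}R_n}
=\E\bigl[e^{zQ_{n+1}}\,\E[e^{zM_{n+1}R_n}\mid M_{n+1}]\bigr]
\le \E e^{zQ_{n+1}+\tilde\psi(zM_{n+1})},
\end{align*}
using the induction hypothesis applied at the point $zM_{n+1}\le z$. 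Now if $z\ge z_1$ the right-hand side is $\E e^{zQ+\phi(zM)}\cdot e^{C}=e^{C}\cdot e^{\phi(z)}\,I_\phi(z)\le e^{\phi(z)+C}=e^{\tilde\psi(z)}$, using $I_\phi(z)\le1$; and if $z\le z_1$ the right-hand side is at most $\E e^{z_1 Q}\cdot e^{\sup_{[0,z_1]}\tilde\psi}$, a fixed finite constant, so after possibly enlarging $C$ once more (depending only on $z_1$, hence harmless) we again get $\le e^{\tilde\psi(z)}$ on $[0,z_1]$. Since $R_n\to R$ a.s. (and the mgfs converge, e.g. by Fatou $\E e^{zR}\le\liminf\E e^{zR_n}$), passing to the limit gives $\E e^{zR}\le e^{\phi(z)+C}$ for all $z\ge0$.

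The main obstacle is the handling of the region $z\le z_1$ (and more generally making sure the ``$+C$'' does not grow under iteration). The point is that the inductive step only reproduces the clean inequality $I_\phi\le1$ for large $z$; for small $z$ one must instead use a crude bound and absorb it into $C$. This works because $[0,z_1]$ is compact, $Q$ has all exponential moments, and $\tilde\psi$ is bounded there — so there is a single additive constant, independent of $n$, that dominates $\log\E e^{zR_{n+1}}$ on $[0,z_1]$; enlarging $C$ to that constant at the outset closes the loop without the constant ever increasing along the induction. A secondary technical point is the base case, which forces $\tilde\psi\ge0$ on $[0,\infty)$; since $\phi$ is (or may be taken) bounded below, this is just another enlargement of $C$. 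One should also record that $\psi$ is finite on $[0,\infty)$ so that all the expectations above are genuinely finite and the interchange of limits at the end (via monotone/Fatou arguments) is legitimate; this is exactly where the light-tail assumptions \eqref{QMGF} and \eqref{Ml1} enter.
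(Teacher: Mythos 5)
Your overall strategy — treat $\phi+C$ as a candidate supersolution of the fixed-point identity, run an induction along the iterates $R_n$, and pass to the limit — is the same skeleton the paper uses. The inductive step for $z\ge z_1$ is exactly right: there the hypothesis $I_\phi(z)\le 1$ makes the additive constant $C$ pass through the iteration unchanged. The gap is in the region $z\le z_1$. You propose to absorb the mismatch there by ``enlarging $C$,'' arguing that the right-hand side $\E e^{zQ+\tilde\psi(zM)}$ is dominated by a fixed constant $\E e^{z_1 Q}\cdot e^{\sup_{[0,z_1]}\tilde\psi}$. But $\tilde\psi=\phi+C$, so both that bound and the target $e^{\tilde\psi(z)}$ carry the same factor $e^C$, which cancels: what you actually need is $\E e^{z_1Q}\cdot e^{\sup_{[0,z_1]}\phi}\le e^{\inf_{[0,z_1]}\phi}$, a condition on $\phi$ alone that is false in general (the left side is $\ge \E e^{z_1 Q}\ge 1$ and typically strictly larger). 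In other words, the crude bound on $[0,z_1]$ is multiplicative, not additive, so it cannot be swallowed by $C$; the constant \emph{would} grow along the induction, which is exactly the obstacle you flag but do not resolve.

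The paper fixes this by not using $\phi+C$ on the small-$z$ region at all. It introduces a modified function
\[
\overline{\phi}(x)=\begin{cases}ax,& x\le N,\\ \phi(x)+C,& x>N,\end{cases}
\]
and proves the \emph{exact} supersolution property $I_{\overline{\phi}}(z)\le 1$ for \emph{all} $z\ge0$, which is what makes the induction go through without any loss. The three ingredients you are missing are: (i) at $z=0$ one has $I_{\overline{\phi}}(0)=1$ with equality, so one must look at the derivative — choosing the slope $a>\E Q/(1-\E M)$ forces $I_{\overline{\phi}}'(0)=\E Q-a(1-\E M)<0$, giving $I_{\overline{\phi}}\le 1$ on a right neighbourhood $[0,\varepsilon)$; (ii) on the middle range $[\varepsilon,N]$ one bounds $I_{\overline{\phi}}(z)\le \E e^{NQ-a\varepsilon(1-M)}$, which tends to $0$ as $a\to\infty$ (using $\P(M=1)=0$ and dominated convergence), so a large enough $a$ handles this compact strip; (iii) for $z>N$ one picks $C$ so that $ax-C\le\phi(x)$ on $[0,N]$, which makes the correction term on the event $\{zM\le N\}$ non-positive and reduces $I_{\overline{\phi}}(z)$ to $I_\phi(z)\le1$. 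With $\overline{\phi}$ a global supersolution, the induction $\E e^{zR_n}\le e^{\overline{\phi}(z)}$ closes cleanly, and since $\overline{\phi}=\phi+C$ for $z>N$ you recover the asserted bound for large $z$. The essential idea you need to add is that the supersolution near the origin must be a steep linear function (with the slope determined by a derivative condition at $0$), not a vertical shift of $\phi$.
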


\begin{proof}[Proof of Theorem~\ref{THco}]
Observe that \eqref{co} follows by \eqref{co1}. 
Indeed, by Theorem \ref{lower} {(see \eqref{tinfty1})}, we have
$$\liminf_{x\to\infty}\frac{\log\P(R_{co}>x)}{h_{co}(x)}\geq - \left(\frac{\gamma}{\gamma-1}\right)^{\gamma-1}.$$

Since $r\mapsto \exp(zr)$ is convex {and monotonic}, by Lemma \ref{LB} we see that 
$$\E \exp(z R)\leq \E \exp(z R_{co}),\qquad z\geq 0,$$
where 
$$R_{co}\stackrel{d}{=}\overline{M}R_{co}+\overline{Q},\qquad(\overline{M},\overline{Q})\mbox{ and }R_{co}\mbox{ are independent}$$
and $(\overline{M},\overline{Q})\stackrel{d}{=}(F_M^{-1}(U),F_Q^{-1}(U))$, $U{\stackrel{d}{\sim}}\mathrm{U}([0,1])$, is a comonotonic vector with given marginals.
Let $\overline{\psi}(z):=\log\E \exp(zR_{co})$.
By exponential Markov inequality 
$$\P(R>x)\leq \frac{\E e^{z R}}{e^{z x}}\leq \frac{\E e^{z R_{co}}}{e^{z x}}.$$
After taking $\log$ and $\inf_{z>0}$ of both sides we arrive at
\begin{align}\label{firstineq}\log \P(R>x)\leq -\overline{\psi}^\ast(x).\end{align}

By the Smooth Variation Theorem, there exist $\lf,\uf\in\mathcal{SR}_r$ and $\lk,\uk\in\mathcal{SR}_\alpha$ with 
$$\lf(x)\sim\uf(x)\qquad\mbox{and}\qquad\lk(x)\sim\uk(x)$$
and 
$$\lf\leq f\leq\uf\qquad\mbox{and}\qquad \lk\leq k\leq\uk$$
on a neighbourhood of infinity. 
Define
{$$\phi=\lf^\ast\circ \lk^\ast\qquad\mbox{ and }\qquad \phi_B(x)=B\phi(x/B)\mbox{ for }B>0$$} 
and
$$B_{co}=\frac{\alpha-1}{\alpha+r-1}\left(\frac{r}{\alpha-1}\right)^{r/(\alpha+r-1)} \left(\frac{\gamma}{\gamma-1}\right)^{\gamma-1}.$$
Assume for a while that for any $B\in(0,B_{co})$,
\begin{align}\label{defIB}
I_B(z):=\E e^{z \overline{Q}+\phi_B(z\overline{M})-\phi_B(z)}\to 0\qquad\mbox{ as }z\to\infty.
\end{align}
By Lemma \ref{lem2} this implies that for any $B<B_{co}$,
$$\overline{\psi}(z)\leq \phi_B(z)+C_B$$
for large $z$ and some constant $C_B$.
Since convex conjugation is order reversing, we have
$$\overline{\psi}^\ast(x)\geq (\phi_B+C_B)^\ast(x)=\phi_B^\ast(x)-C_B.$$
Moreover, 
$$\phi_B^\ast(x)= B\phi^\ast(x)=B (\lf^\ast\circ \lk^\ast)^\ast(x).$$
Above, together with \eqref{firstineq} imply that for any $B<B_{co}$ we have
$$
\limsup_{x\to\infty}\frac{\log\P(R>x)}{\phi^\ast(x)}\leq-B.
$$
Passing to the limit as $B\uparrow B_{co}$, we obtain that
$$
\limsup_{x\to\infty}\frac{\log\P(R>x)}{(f^\ast\circ k^\ast)^\ast(x)}\leq-\frac{\alpha-1}{\alpha+r-1}\left(\frac{r}{\alpha-1}\right)^{r/(\alpha+r-1)} \left(\frac{\gamma}{\gamma-1}\right)^{\gamma-1}
$$
and by \eqref{seqh} this is equivalent to \eqref{co1}.

It is left to show that \eqref{defIB} holds.

For any $\eps\in(0,1)$ we have
	\begin{align}\label{IB1}
	I_B(z)= \E e^{zQ+\phi_B(zM)-\phi_B(z)}I_{M\leq 1-\eps}+\E e^{z\overline{Q}+\phi_B(z\overline{M})-\phi_B(z)}I_{M> 1-\eps}=:K_1(z)+K_2(z).
	\end{align}
Since $\psi_B\in \mathcal{R}_{\beta r^\ast}$ and, by Kasahara's Tauberian Theorem, $z\mapsto\log\E\exp(zQ)\sim k^\ast(z)\in\mathcal{R}_\beta$, we have
$$K_1(z)\leq e^{\log\E\exp(zQ)+\phi_B(z(1-\eps))-\phi_B(z)}=o(1).$$

By definition of the generalized inverse, we have
\begin{align*}
U\leq F_M(F_M^{-1}(U))\qquad\mbox{ and }\qquad F_M^{-1}(U)\stackrel{d}{=}M. %\qquad\mbox{and}\qquad F_M^{-1}(F_M(M))\leq M.
\end{align*}
Thus,
\begin{align*}
K_2&=\E e^{z F_Q^{-1}(U)+\phi_B(zF_M^{-1}(U))-\phi_B(z)}I_{F_M^{-1}(U)> 1-\eps}\\
&\leq \E e^{z F_Q^{-1}\left(F_M(M)\right)+\phi_B\left(zM\right)-\phi_B(z)}I_{M>1-\eps}.
\end{align*}

Let us denote $s(x):={F}_Q^{-1}\left(F_M(x)\right)$. 
By the definition of $f$ and $k$ {we have for $x\in(0,1)$,
\begin{align*}
(1-x)f\left(\frac{1}{1-x}\right)=-\log(1-F_M(x)) \qquad\mbox{and}\qquad
F_Q^{-1}(x)\leq \lk^{-1}(-\log(1-x)).
\end{align*}
}
{Hence,} it is easy to see that in a left neighbourhood of $1$, we have $s\leq \overline{s}$,
where
\begin{align}\label{defs}
\overline{s}(x):=\lk^{-1}\left((1-x)\uf\left(\frac{1}{1-x}\right)\right).
\end{align}
Since $\phi$ is ultimately convex, we have for $x\in(1-\eps,1]$
$$\phi_B(z x)-\phi_B(z)\leq -z \phi_B^\prime(z x)(1-x)\leq -z \phi_B^\prime(z (1-\eps))(1-x)=-z \phi_{B_\eps}^\prime(z)(1-x),$$
where $B_\eps:=B/(1-\eps)$. Thus,
\begin{align*}
K_2(z)&\leq 
	\int_{(1-\eps,1]} \exp\left( z \overline{s}(x)-z \phi_{B_\eps}^\prime(z)(1-x)\right) \dd F_M(x)\\
&= \int_{(1-\eps,1]} \exp\left( z \overline{s}(x)-z \phi_{B_\eps}^\prime(z)(1-x)-\eta (1-x)f\left(\frac{1}{1-x}\right)\right) \frac{1}{(1-F_M(x))^\eta}\dd F_M(x) 
\intertext{since $\log\P(M>x)=-(1-x)f(1/(1-x))$. Further, }
K_2(z) & \leq \exp
\left(
    \sup_{t\in[0,\eps)}
    \left\{
          z \overline{s}(1-t)-zt \phi_{B_\eps}^\prime(z)-\eta t\lf\left(\frac1t\right)
    \right\}
\right)
\int_{(1-\eps,1]}\frac{\dd F_M(x)}{(1-F_M(x))^\eta}
\end{align*}
and the integral is finite for any $\eta\in(0,1)$.

	Since all functions involved are smooth, {one can show that for $\eps$ small enough, the expression under $\sup$ as a function of $t\in(0,\eps)$ is concave (calculate the second derivative and use the fact that $x\mapsto \overline{s}(1-1/x)\in \mathcal{R}_{(r-1)/\alpha}$ and $x\mapsto \lf(x)/x\in\mathcal{R}_{r-1}$). Hence,} the supremum above is attained at $t_0=t_0(z)$ such that
	\begin{align}\label{deft0}
	z \overline{s}^\prime(1-t_0)+z \phi_{B_\eps}'(z)=\eta\tfrac{1}{t_0}\lf'\left(\tfrac{1}{t_0}\right)-\eta\lf\left(\tfrac{1}{t_0}\right).
	\end{align}
	Put $t_0=1/(\lf^\ast)^\prime(\lk^\ast(x))$.
	Then, by \eqref{eqstar},
	$$z\overline{s}^\prime\left(1-\frac{1}{(\lf^\ast)^\prime(\lk^\ast(x))}\right)+z \phi_{B_\eps}^\prime(z)=\eta\lf^\ast(\lk^\ast(x))=\eta\phi(x).$$
	It is clear that if $z\to\infty$ then $x=x(z)\to\infty$ and $t_0\to0$. 
	Moreover, since $\phi{=\lf^\ast\circ\lk^\ast}\in\mathcal{R}_{\beta r^\ast}$, we have
	$$z \phi_{B_\eps}'(z)\sim\beta r^\ast \phi_{B_{\eps}}(z)\sim \beta r^\ast B_\eps^{1-\beta r^\ast} \phi(z)$$
	and (see Lemma \ref{Lemt})
	$$\overline{s}^\prime\left(1-\frac{1}{(\lf^\ast)^\prime(\lk^\ast(x))}\right) \sim r^{1/\beta}(\beta-1)^{1/\beta}\frac{\phi(x)}{x}.$$
	
	Thus
	$$ r^{1/\beta}(\beta-1)^{1/\beta}\frac{z}{x}(1+o(1)) + \beta r^\ast B_\eps^{1-\beta r^\ast} \frac{\phi(z)}{\phi(x)}(1+o(1))=\eta.$$
	
	Take arbitrary sequence $z_n\to\infty$, set $x_n=x(z_n)$ and define $y_n=\frac{z_n}{x_n}$. 
	We {have
	\begin{align}\label{nnn}
	\underline{C}_1 \frac{z_n}{x_n}+\underline{C}_2 \frac{\phi(z_n)}{\phi(x_n)}\leq \eta \leq \overline{C}_1 \frac{z_n}{x_n}+\overline{C}_2 \frac{\phi(z_n)}{\phi(x_n)}
	\end{align}
 for some positive constants $\underline{C}_i, \overline{C}_i$, $i=1,2$. Thus, by the first inequality above we quickly infer that $y_n=\frac{z_n}{x_n}\leq \eta/\underline{C}_1$. 
	By Potter bounds (\cite[Theorem~1.5.6]{BGT89}) we have that for any $A>1$ and $\delta>0$
	$$\frac{\phi(z)}{\phi(x)}\leq A \max\left\{ \left(\frac{z}{x}\right)^{\beta r^\ast+\delta},\left(\frac{z}{x}\right)^{\beta r^\ast-\delta} \right\}$$
	for sufficiently large $z$ and $x$. Hence, the second inequality in \eqref{nnn} gives us 
	$$0<\eta\leq \max\{\overline{C}_1,A\overline{C_2}\} \max\{y_n,y_n^{\beta r^\ast\pm\delta}\}$$ 
	and so} $\lambda_1\leq y_n\leq \lambda_2$ for some positive constants $\lambda_1,\lambda_2$. Thus there exists a convergent subsequence $y_{n_k}$ to $D$, say, for which we also have ($x_n=z_n/y_n$)
	$$ r^{1/\beta}(\beta-1)^{1/\beta}y_{n_k}(1+o(1)) + \beta r^\ast B_\eps^{1-\beta r^\ast} \frac{\phi(z_{n_k})}{\phi(z_{n_k}/y_{n_k})}(1+o(1))=\eta,$$
	{where $o(1)$ is with respect to $z_{n,k}\to\infty$.}
	Thanks to uniform convergence in \eqref{reg}, we see that 
	$$\frac{\phi(z_{n_k})}{\phi(z_{n_k}/y_{n_k})} \to D^{\beta r^\ast}$$ 
	and 	$D=D(B,\eta,\eps)$ satisfies
	\begin{align}\label{eqD}
	r^{1/\beta}(\beta-1)^{1/\beta}D + \beta r^\ast B_\eps^{1-\beta r^\ast} D^{\beta r^\ast}=\eta.
	\end{align}
	Since such $D$ is unique {(left hand side of \eqref{eqD} is strictly increasing in $D>0$)}, we conclude that $z\sim D x$.
	
	%Assume that 
	%$$B_\eps^{\beta r^\ast-1}= \frac{\beta r^\ast \alpha^{\beta r^\ast-1}}{(1+\delta)^{\beta r^\ast}(\alpha-1)\left(1/\alpha+1/r-1/(\alpha r)\right)^{\beta r^\ast-1} r^{r^\ast-1}}.$$
	%Then,
	
Recall that we have
	$$
	K_2(z)\leq C_\eta \exp\left( z \overline{s}(1-t_0)-zt_0 \phi_{B_\eps}^\prime(z)-\eta t_0\lf\left(\frac{1}{t_0}\right)   \right)
  $$
 for some finite constant $C_\eta$. By \eqref{deft0},
\begin{align*}
z \overline{s}(1-t_0)-zt_0 \phi_{B_\eps}^\prime(z)-\eta t_0\lf\left(\frac{1}{t_0}\right) 
= z \overline{s}(1-t_0)+z t_0 \overline{s}^\prime(1-t_0)-\eta \lf^\prime\left(\frac{1}{t_0}\right).
\end{align*}
By Lemma \ref{Lemt} we have
\begin{align*}
z \overline{s}(1-t_0)&\sim\alpha(\beta-1)^{1/\beta} r^{-1/\alpha} D \lk^\ast(x)
\intertext{and}
z t_0 \overline{s}^\prime(1-t_0) &\sim D x \frac{1}{(\lf^\ast)^\prime(\lk^\ast(x))}\frac{r^{1/\beta}(\beta-1)^{1/\beta}\lf^\ast(\lk^\ast(x))}{x}\\
&\sim r^{1/\beta}(\beta-1)^{1/\beta}(r^\ast)^{-1} D k^\ast(x).
\end{align*}
Thus, 
\begin{align}\label{logK}
\limsup_{z\to\infty}\frac{\log K_2(z)}{k^\ast(x)}\leq 	\alpha(\beta-1)^{1/\beta} r^{-1/\alpha} D+ r^{1/\beta}(\beta-1)^{1/\beta}(r^\ast)^{-1}D-\eta.
\end{align}
If the right hand side above is negative, then {for some $\zeta>0$ we have} $K_2(z)\leq{\exp(-\zeta k^\ast(x))} \to 0$ as $z\to\infty$ and the same holds for $I_B$.
We will show that if $B<B_{co}$, then the right hand side of \eqref{logK} is negative for some $\eta,\eps\in(0,1)$.
Right hand side of \eqref{logK} is negative if
$$D<\eta\frac{r^{1/\alpha}}{(\alpha+r-1)(\beta-1)^{1/\beta}}=:\overline{D},$$
where we have used
$$\alpha r^{-1/\alpha}+r^{1/\beta}(r^\ast)^{-1}=r^{-1/\alpha}(\alpha+r-1).$$
We will show that for fixed $\eta$ and $\eps$, function $B\mapsto D(B,\eta,\eps)$ is strictly increasing. 
Let $0<B_1<B_2$ and put $D_i=D(B_i,\eta,\eps)$, $i=1,2$. Then by \eqref{eqD} we obtain (recall that $1-\beta r^\ast<0$)
\begin{align*}
0&=r^{1/\beta}(\beta-1)^{1/\beta}(D_1-D_2) + \frac{\beta r^\ast}{(1-\eps)^{1-\beta r^\ast}} \left( B_1^{1-\beta r^\ast} D_1^{\beta r^\ast}-B_2^{1-\beta r^\ast} D_2^{\beta r^\ast}\right) \\
&>r^{1/\beta}(\beta-1)^{1/\beta}(D_1-D_2) + \frac{\beta r^\ast}{(1-\eps)^{1-\beta r^\ast}}B_2^{1-\beta r^\ast} \left( D_1^{\beta r^\ast}- D_2^{\beta r^\ast}\right),
\end{align*}
which implies that $D_2>D_1$. 
Moreover, after tedious but straightforward calculations one can show that
for 
$$\overline{B}:=\eta(1-\eps)\frac{\alpha-1}{\alpha+r-1}\left(\frac{r}{\alpha-1}\right)^{r/(\alpha+r-1)}  \left(\frac{\gamma}{\gamma-1}\right)^{\gamma-1}=\eta(1-\eps)B_{co}$$
one has $D(\overline{B},\eta,\eps)=\overline{D}$. {To see this, insert definition of $\overline{D}$ into \eqref{eqD} and calculate $B$. It is equal to $\overline{B}$.}

Thus, for any $B<B_{co}$, there exists $\eta, \eps\in(0,1)$ such that $B<\overline{B}$ and thus $D(B,\eta,\eps)<\overline{D}$.
\end{proof}

\begin{lemma}\label{Lemt}
Under the assumptions of Theorem \ref{THco}, assume that $z$ and $t_0$ are related by \eqref{deft0}. 
Let $t_0=1/(\lf^\ast)^\prime(\lk^\ast(x))$, $\phi=\lf^\ast\circ \lk^\ast$ and function $\overline{s}$ be defined as in \eqref{defs}. Then, as $z\to\infty$, we have
\begin{itemize}
\item[a)]
$\overline{s}(1-t_0)\sim\alpha(\beta-1)^{1/\beta} r^{-1/\alpha} \frac{\lk^\ast(x)}{x}$,
\item[b)]
$\overline{s}^\prime(1-t_0) \sim r^{1/\beta}(\beta-1)^{1/\beta}\frac{\phi(x)}{x}.$
\end{itemize}
\end{lemma}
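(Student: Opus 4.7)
The plan is to exploit the Legendre duality between $\lf$ and $\lf^\ast$ together with smooth regular variation of $\lf,\uf$ and $\lk$. The starting point is the identity $1/t_0=(\lf^\ast)^\prime(\lk^\ast(x))$, which, since $(\lf^\ast)^\prime=(\lf^\prime)^{-1}$, is equivalent to
$$
\lf^\prime(1/t_0)=\lk^\ast(x).
$$
Note that as $z\to\infty$ the relation $z\sim Dx$ derived in the proof of Theorem \ref{THco} gives $x\to\infty$, and by regular variation of $(\lf^\ast)^\prime\in \mathcal{R}_{1/r^\ast-1}$ it follows that $t_0\to 0$. Since $\lf\in\mathcal{SR}_r$, $\lf^\prime(y)\sim r\lf(y)/y$, so $\lk^\ast(x)\sim r\,t_0\lf(1/t_0)$, and using $\uf\sim\lf$,
$$
t_0\uf(1/t_0)\sim t_0\lf(1/t_0)\sim \lk^\ast(x)/r.
$$
Moreover the Fenchel--Young equality yields $\phi(x)=\lf^\ast(\lk^\ast(x))=\lk^\ast(x)/t_0-\lf(1/t_0)\sim(r-1)\lf(1/t_0)$, hence
$$
\lk^\ast(x)/t_0\sim r\phi(x)/(r-1).
$$

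For part a), I would write $\overline{s}(1-t_0)=\lk^{-1}(t_0\uf(1/t_0))$ and invoke Lemma \ref{kinv} in the form $\lk^{-1}(y)\sim\alpha(\beta-1)^{1/\beta}\,y/(\lk^\ast)^{-1}(y)$. Regular variation of $(\lk^\ast)^{-1}\in\mathcal{R}_{1/\beta}$ gives $(\lk^\ast)^{-1}(\lk^\ast(x)/r)\sim x\,r^{-1/\beta}$, so
$$
\overline{s}(1-t_0)\sim\alpha(\beta-1)^{1/\beta}\,\frac{\lk^\ast(x)/r}{x\,r^{-1/\beta}}=\alpha(\beta-1)^{1/\beta}\,r^{-1/\alpha}\,\frac{\lk^\ast(x)}{x},
$$
after using $1/\beta-1=-1/\alpha$.

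For part b), I would differentiate $\overline{s}(y)=\lk^{-1}((1-y)\uf(1/(1-y)))$ by the chain rule. Setting $u=1-y$ and $v=u\uf(1/u)$, smooth regular variation of $\uf$ gives $dv/dy\sim(r-1)\uf(1/u)$ at $u=t_0$, while $\lk^\prime(w)\sim\alpha\lk(w)/w$ yields $(\lk^{-1})^\prime(v)\sim\lk^{-1}(v)/(\alpha v)$. Combining,
$$
\overline{s}^\prime(1-t_0)\sim\frac{(r-1)\,\overline{s}(1-t_0)}{\alpha t_0}.
$$
Substituting part a) and then the displayed asymptotic $\lk^\ast(x)/t_0\sim r\phi(x)/(r-1)$ gives
$$
\overline{s}^\prime(1-t_0)\sim\frac{(r-1)(\beta-1)^{1/\beta}}{r^{1/\alpha}\,t_0}\cdot\frac{\lk^\ast(x)}{x}\sim r^{1-1/\alpha}(\beta-1)^{1/\beta}\,\frac{\phi(x)}{x}=r^{1/\beta}(\beta-1)^{1/\beta}\,\frac{\phi(x)}{x},
$$
which is the claim.

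The main obstacle is essentially bookkeeping: keeping track of the asymptotic constants through the composition $\lf,\lf^\ast,\lk,\lk^\ast$ and their inverses, and justifying termwise differentiation, for which the smoothness assumption $\lf,\lk\in\mathcal{SR}_\rho$ is crucial since it supplies the pointwise derivative asymptotics used at each step. A minor technical point is to confirm the simultaneous behaviour $t_0\to 0$ and $x\to\infty$ as $z\to\infty$, required to legitimise Lemma \ref{kinv}, but this is immediate from the $z\sim Dx$ relation established earlier in the proof of Theorem \ref{THco}.
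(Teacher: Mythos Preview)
Your proof is correct and takes essentially the same route as the paper---the duality $\lf'(1/t_0)=\lk^\ast(x)$, smooth regular variation, and Lemma~\ref{kinv}---with only a cosmetic difference in part~b), where you first reduce to $\overline{s}'(1-t_0)\sim\frac{(r-1)\,\overline{s}(1-t_0)}{\alpha t_0}$ and then invoke part~a) together with the Fenchel--Young identity $\lk^\ast(x)/t_0\sim r\phi(x)/(r-1)$, whereas the paper handles numerator and denominator separately via \eqref{eqstar} and the relation $\lk'\circ(\lk^\ast)'=\mathrm{Id}$. One caveat: your closing remark that $x\to\infty$ follows from $z\sim Dx$ is circular, since in the proof of Theorem~\ref{THco} that relation is derived \emph{using} part~b) of this very lemma; the non-circular justification (which the paper simply calls ``clear'') is that the left side of \eqref{deft0} contains $z\phi_{B_\eps}'(z)\to\infty$, forcing the right side $\eta\lf^\ast(\lf'(1/t_0))$ to diverge and hence $t_0\to0$, $x\to\infty$.
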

\begin{proof}
$a)$ Since $f$ is regularly varying and $\lf\sim\uf$, we have $\lf^\ast\sim \uf^\ast$.
Thus,
$$t_0 \uf\left(t_0^{-1}\right)\sim \uf^\prime\left(t_0^{-1}\right)/r\sim \lk^\ast(x)/r.$$
{Hence},
$$x \overline{s}(1-t_0)=x \lk^{-1}\left(t_0\uf(1/t_0)\right)\sim x \lk^{-1}\left(\lk^\ast(x)/r\right)\sim x r^{-1/\alpha} \lk^{-1}\left(\lk^\ast(x)\right).$$
Moreover, by Lemma \ref{kinv} {with the substitution $x\mapsto k^\ast(x)$,} we have
$$\lk^{-1}\left(\lk^\ast(x)\right) \sim \alpha(\beta-1)^{1/\beta} \lk^\ast(x)/x.$$

$b)$ We have
$$\overline{s}^\prime(1-t_0)=\frac{\uf^\prime(1/t_0)/t_0-\uf(1/t_0)}{\lk^\prime(\overline{s}(1-t_0))}.$$
 By \eqref{eqstar} the numerator above equals 
 $$\uf^\ast\left(\uf^\prime(1/t_0)\right)=\uf^\ast\left(\uf^\prime( \lf^\prime(\lk^\ast(x)))\right)\sim\phi(x).$$
 By $a)$,
 $$\overline{s}(1-t_0)\sim\alpha(\beta-1)^{1/\beta} r^{-1/\alpha}\beta^{-1} (\lk^\ast)^\prime(x)$$
 and thus
 $$\lk^\prime(\overline{s}(1-t_0))\sim \lk^\prime(\alpha(\beta-1)^{1/\beta} r^{-1/\alpha}\beta (\lk^\ast)^\prime(x))\sim (\alpha(\beta-1)^{1/\beta} r^{-1/\alpha}\beta^{-1})^{\alpha-1} x,$$
{where the latter asymptotic equivalence follows from the fact that $\lk^\prime\in\mathcal{R}_{\alpha-1}$ and $\lk^\prime\circ (\lk^\ast)^\prime=\mathrm{Id}$.}
Finally observe that since $\alpha^{-1}+\beta^{-1}=1$, we have
$$r^{1/\beta}(\beta-1)^{1/\beta}=(\alpha(\beta-1)^{1/\beta} r^{-1/\alpha}\beta^{-1})^{1-\alpha}.$$
\end{proof}

\section{Proofs {of auxiliary results}}\label{secProof}
\begin{proof}[Proof of Lemma~\ref{kinv}]
	Suppose first that $f\in \mathcal{SR}_\alpha$. Then, $f^\prime\in\mathcal{SR}_{\alpha-1}$ has inverse on some neighbourhood of infinity. Since $(f^\ast)^\prime=(f^\prime)^{-1}$ we see that $(f^\ast)^\prime\in\mathcal{SR}_{1/(\alpha-1)}$ and so $f^\ast\in \mathcal{SR}_\beta$. %and both $f$ and $f^\ast$ have inverses on some neigbourhood of infinity, that is, $f^{\leftarrow}=f^{-1}$ and $(f^\ast)^{\leftarrow}=(f^\ast)^{-1}$.
	By \eqref{eqstar} and \eqref{SR} we have
	$$\frac{f\left((f^\ast)^\prime(x)\right)}{f^\ast(x)}=\frac{x (f^\ast)^\prime(x)-f^\ast(x)}{f^\ast(x)}\to \beta-1,\quad\mbox{ as }x\to\infty.$$
	Since $(f^\prime)^{-1}=(f^\ast)^\prime$, setting above $x(z)=f^\prime\left(f^{-1}(z)\right)\to\infty$ we obtain
	$$
	\frac{z}{f^\ast(x(z))}\to \beta-1.
	$$
	Thus, e.g. \cite[Lemma 2.1]{BK17} gives us
	$$(f^\ast)^{-1}(z)\sim (\beta-1)^{1/\beta} x(z)=(\beta-1)^{1/\beta}f^\prime\left(f^{-1}(z)\right) $$
	and
	$$\frac{f^{-1}(x)(f^\ast)^{-1}(x)}{x}\sim (\beta-1)^{1/\beta}\frac{f^{-1}(x)f^\prime\left(f^{-1}(x)\right)}{x}\to (\beta-1)^{1/\beta} \alpha.$$
	by the definition of $\mathcal{SR}_\alpha$.
	
	In the general case, the Smooth Variation Theorem yields the existence of $\lf,\uf\in \mathcal{SR}_\alpha$ with $\lf\leq f\leq \uf$ on some neighbourhood of infinity. Since conjugacy is order reversing, we have $\uf^\ast\leq f^\ast\leq \lf^\ast$. Moreover, $\uf^{-1}\leq f^{\leftarrow}\leq \lf^{-1}$ on a vicinity of infinity and similar inequalities hold for $(f^\ast)^{\leftarrow}$. The conclusion follows by the fact that $\lf(x)\sim\uf(x)$.
\end{proof}

\begin{proof}[Proof of Lemma~\ref{LB}]
	We will use the fact that a stochastic recursion \eqref{it} converges in distribution to the solution of affine equation.	
	Take $R_0=R'_0=0$ a.s. We proceed by induction. Assume that for some $n\in\mathbb{N}$ one has
	\begin{align}\label{ind}
	\E f(R_n)\leq \E f(R'_{n})\qquad\mbox{for all convex functions $f$ on $\RR$}.
	\end{align}
	Let $f$ be a convex function. By the fact that $r\mapsto \E f(M r+Q)$ is convex and by the inductive assumption, we first infer that
	$$\E f(M_{n+1}R_n+Q_{n+1})\leq \E f(M_{n+1}R'_{n}+Q_{n+1}).$$
	Further, for any $r\geq0$, the function $h_r(m,q):=f(m r+ q)$ is supermodular. Note that since $R_0'=0$ and $M, Q$ are a.s. non-negative, $R_n'$ is a.s. non-negative as well. Then,
	\begin{multline*}
		\E f(R_{n+1}) \leq \E f(M_{n+1}R'_n+Q_{n+1}) = \E h_{R'_n}(M_{n+1},Q_{n+1}) 
		\leq \E h_{R'_n}(M'_{n+1},Q'_{n+1})=\E f(R'_{n+1}).
	\end{multline*}
	Thus we have established \eqref{ind} for any $n\in\mathbb{N}$. 
	Observe that $(R_{n})_n$ is stochastically non-decreasing, that is,
	$$R_{n+1}\stackrel{d}{=}\sum_{k=1}^{n+1} M_1\cdot\ldots\cdot M_{k-1}Q_k\geq\sum_{k=1}^{n} M_1\cdot\ldots\cdot M_{k-1}Q_k\stackrel{d}{=}R_n.$$
	Thus, for any {weakly monotonic} function $f$, $(f(R_n))_n$ is stochastically {weakly monotonic} as well {and the same holds for $(f(R'_n))_n$}. 
	{
	Assertion follows by the fact that $\E f(R_{n})$ and $\E f(R_{n}')$ are weakly monotone and so have a limit (possibly infinite) as $n\to\infty$.
	}
\end{proof}

\begin{proof}[Proof of Theorem~\ref{Kas}]
	In \cite[Theorem 4.12.7]{BGT89} a different formulation of the same result is proposed. Namely, if $\alpha\in(0,1)$, $\phi\in \mathcal{R}_\alpha$, define $\psi(z)=z/\phi(z)\in \mathcal{R}_{1-\alpha}$.
	Then,
	$$-\log\P(X>x)\sim \phi^{\leftarrow}(x)$$
	if and only if 
	$$\log M(z)\sim (1-\alpha)\alpha^{\alpha/(1-\alpha)}\psi^{\leftarrow}(z).$$
	
	We have to show that
	$$ k(x)\sim \phi^{\leftarrow}(x)\qquad\mbox{if and only if}\qquad k^\ast(z)\sim (1-\alpha)\alpha^{\alpha/(1-\alpha)}\psi^{\leftarrow}(z).$$
	Let $\rho=1/\alpha$ and put $f=\phi^{\leftarrow}\in\mathcal{R}_{\rho}$.
	By Lemma \ref{kinv}, we have
	\begin{align}\label{fff}
	\frac{f^{\leftarrow}(x) (f^\ast)^{\leftarrow}(x)}{x}\to  \rho(\rho-1)^{-(\rho-1)/\rho}= \alpha^{-\alpha} (1-\alpha)^{-(1-\alpha)},\qquad\mbox{ as }x\to\infty.
	\end{align}
	But 
	$$\frac{f^{\leftarrow}(x) (f^\ast)^{\leftarrow}(x)}{x}\sim\frac{(f^\ast)^{\leftarrow}(x)}{\psi(x)}$$
	and so \eqref{fff} is equivalent to {(use definition of asymptotic inverse and Lemma 2.1 in \cite{BK17})}
	$$f^\ast(z)\sim (1-\alpha)\alpha^{\alpha/(1-\alpha)}\psi^{\leftarrow}(z).$$
\end{proof}

\begin{proof}[Proof of Theorem~\ref{deB}]
	Recall that $h\in \mathcal{R}_\rho(0+)$ if $x\mapsto h(1/x)\in\mathcal{R}_{-\rho}$. {Moreover, if $h\in \mathcal{R}_\rho(0+)$, then $h^\leftarrow\in\mathcal{R}_{1/\rho}$. 
	Indeed, we have $h(1/x)=x^{-\rho} L(x)$ for some slowly varying function $L$. The (asymptotic) inverse $g$ of $x\mapsto x^{-\rho} L(x)$ is regularly varying with index $-1/\rho$. But then we have $h^\leftarrow(x)\sim1/g(x)$.
	}
	
	In \cite[Theorem 4.12.9]{BGT89} the following result is proved: for $\alpha<0$ and $\phi\in \mathcal{R}_\alpha(0+)$, define $\psi(z)=\phi(z)/z\in \mathcal{R}_{\alpha-1}(0+)$.
	Then,
	$$-\log\P(Y\leq x)\sim 1/\phi^{\leftarrow}(1/x)\qquad(x\to0+)$$
	if and only if 
	$$-\log \E e^{-\lambda Y}\sim (1-\alpha)(-\alpha)^{\alpha/(1-\alpha)}/\psi^{\leftarrow}(\lambda)\qquad(\lambda\to\infty).$$

	First observe that, under regular variation, asymptotics of $-\log\P(Y\leq 1/x)$ and $-\log\P(Y< 1/x)$ are the same. Indeed, for any $\eps>0$, we have $-\log\P(Y\leq 1/(x+\eps))\sim-\log\P(Y\leq 1/(x-\eps))$.
	Further, it is easy to see that if $f$ is regularly varying, then
  $$f(x)\sim x/\phi^\leftarrow(x)\quad \mbox{ if and only if }\quad f^\leftarrow(x)\sim x\psi^\leftarrow(x).$$
  It is left to show that 
	$$f(x)\sim x/\phi^\leftarrow(x)\quad \mbox{ if and only if }\quad (f^\ast)^{{\leftarrow}}(\lambda)\sim (1-\alpha)(-\alpha)^{\alpha/(1-\alpha)}/\psi^{\leftarrow}(\lambda).$$
	Since $x\mapsto \phi(1/x)\in\mathcal{R}_{-\alpha}$, we see that {$\phi^\leftarrow\in\mathcal{R}_{1/\alpha}$ and so }$f\in \mathcal{R}_{\rho}$ with $\rho=1-\alpha^{-1}$. 
	By Lemma \ref{kinv} we have
	$$\frac{f^{\leftarrow}(x) (f^\ast)^{\leftarrow}(x)}{x}\to (1-\alpha)(-\alpha)^{\alpha/(1-\alpha)},\qquad\mbox{ as }x\to\infty,$$
which ends the proof.
\end{proof}

\begin{proof}[Proof of Theorem~\ref{propH}]
\begin{itemize}
\item[a)] By the definition of $h$, for any $x$ and any positive number $g(x)$, there exists a number $t(x)$ such that
$$h(x)\leq -t(x)\log\P\left(\frac{1}{1-M}>t(x), Q>\frac{x}{t(x)}\right)\leq h(x)+g(x).$$
If $g(x)=o(1)$, we obtain the first part of the assertion. 

Using the fact that 
$$\P\left(\frac{1}{1-M}>t(x), Q>\frac{x}{t(x)}\right)\leq \P\left(\frac{Q}{1-M}>x\right),$$
we obtain \eqref{hineq}.

\item[b)] The assertion follows from the Fr\'{e}chet–-Hoeffding bounds
\begin{align*}
\P\left(\frac{1}{1-M}>t\right)&+\P\left(Q>\frac{x}{t}\right)-1\\
&\leq\P\left(\frac{1}{1-M}>t, Q>\frac{x}{t}\right)\leq
 \min\left\{\P\left(\frac{1}{1-M}>t\right), \P\left(Q>\frac{x}{t}\right)\right\}.
\end{align*}

\item[c)]
First part follows quickly by Theorem \ref{HU}; see Remark \ref{hfk}. Moreover, we already know that if $f\in\mathcal{R}_r$ and $k\in\mathcal{R}_\alpha$ with $r,\alpha>1$, then 
$(f^\ast\circ k^\ast)^\ast\in\mathcal{R}_\gamma$. 
Thanks to Smooth Variation Theorem, we may only consider the case when $f\in\mathcal{SR}_{r}$ and $k\in\mathcal{SR}_\alpha$. The infimum in the definition of 
$h_{ind}(x)$
is attained at a point $t_1=t_1(x)$ such that
\begin{align}\label{ggg}
f^\prime(t_1)=\frac{x}{t_{{1}}}k^\prime\left(\frac{x}{t_1}\right)-k\left(\frac{x}{t_1}\right)=k^\ast\left(k^\prime\left(\frac{x}{t_1}\right)\right),
\end{align}
where the last equality is \eqref{eqstar}. 
Thus, by regular variation of $f$ and $k$, we obtain
$$x=t_1\left[(k^\ast)^\prime\circ(k^\ast)^{-1}\circ f^\prime\right](t_1)=t_1^{(r+\alpha-1)/\alpha}L(t_1).$$
for some slowly varying function $L$. This means that $t_1\to\infty$ and $x/t_1\to\infty$ as $x\to\infty$ and that $x\mapsto t_1(x)\in\mathcal{R}_{\alpha/(r+\alpha-1)}$.

Consider now the case when $\mathrm{ess}\sup Q=q_+<\infty$. In such case $k(x)=\infty$ if $x\geq q_+$ and so
$$k^\ast(z)=\sup_{x>0}\{zx-k(x)\}=\sup_{x<q_+}\{zx-k(x)\}\leq\sup_{x<q_+}\{zx\}=z q_+.$$
On the other hand, for any $x<q_+$,
$$\frac{k^\ast(z)}{z}\geq x-\frac{k(x)}{z}\to x\qquad\mbox{ as }z\to\infty.$$
Then, we have
\begin{align*}
(f^\ast\circ k^\ast)^\ast(x)&=\sup_{z>0}\left\{zx- f^\ast\left(k^\ast(z)\right)\right\}
\\
&\sim \sup_{z>0}\left\{zx-f^\ast(q_+ z)\right\} 
= \sup_{y>0}\left\{\frac{x}{q_+}y-f^\ast(y)\right\}=f\left(\frac{x}{q_+}\right).
\end{align*}

\item[d)] As previously, we work with $f\in\mathcal{SR}_{r}$ and $k\in\mathcal{SR}_\alpha$. 
{Let us first make a simple observation that if functions $a$ and $b$ are continuous, $a(x_0)<b(x_0)$ and $a(x_1)>b(x_1)$, $a$ is increasing and $b$ is decreasing, then there exists a unique $t_0$ such that $a(t_0)=b(t_0)$ and moreover
$\inf_{t\in[x_0,x_1]}\max\{ a(t),b(t)\}=a(t_0)$. 
Our first step here will be to show that the infimum in the definition of 
$$h_{co}=\inf_{t\geq 1}\left\{\max\left\{f(t),t\,k\left(\frac{x}{t}\right)\right\}\right\}$$ 
is (for $x$ large enough) attained at a point $t_2=t_2(x)$ such that $f(t_2)=t_2 k(x/t_2)$. 
In our case, function $[1,\infty)\ni t\mapsto t\,k\left(\frac{x}{t}\right)$ may not be decreasing and so we can't use our observation directly. However,
note that since $k\in\mathcal{SR}_\alpha$, the limit 
$$\lim_{z\to\infty}\frac{z k^\prime(z)}{k(z)}=\alpha$$
is strictly larger than $1$ and so $k(z)<z k^\prime(z)$ for $z$ large enough, say $z\geq1/T$ for some $T>0$. 
Calculating the derivative of $t\mapsto t\, k(x/t)$ we obtain
$$k\left(\frac{x}{t}\right)-\frac{x}{t}k^\prime\left(\frac{x}{t}\right)$$
which is strictly negative if $x/t\geq 1/T$, that is, $t\leq T x$.
Hence, for $t\in [1,T x)$, $f(t)$ is increasing and $t\, k(x/t)$ is decreasing. Moreover, for large $x$ we have $f(1)<k(x)$ and $f(x)>x k(1)$ and so 
$$\inf_{t\in [1,T x]} \left\{\max\left\{f(t),t\,k\left(\frac{x}{t}\right)\right\}\right\}=f(t_2).$$
It is enough to show that for $x$ large enough, the infimum in the definition of $h_{co}$ is not attained on the set $(T x,\infty)$. 
We have 
$$\inf_{t> T x} \left\{\max\left\{f(t),t\,k\left(\frac{x}{t}\right)\right\}\right\}\geq \max\{ f(T x), T x\sup_{z\in(0,1/T)} k(z)\}\sim T^r f(x) \in \mathcal{R}_{r}.$$
Let us assume for a while that 
\begin{align}\label{TOSHOW}
x\mapsto f(t_2(x))\in\mathcal{R}_{\alpha r/(\alpha+r-1)}.
\end{align} 
Since, $r>\alpha r/(\alpha+r-1)$ we see that under \eqref{TOSHOW} our claim is true and we have 
$$h_{co}(x)=\inf_{t\in [1,T x]} \left\{\max\left\{f(t),t\,k\left(\frac{x}{t}\right)\right\}\right\}=f(t_2(x)).$$
}
{But $t\mapsto t\, k^{-1}(f(t)/t)\in \mathcal{R}_{(r^\ast+\alpha-1)/\alpha}$ and $x=t_2\, k^{-1}(f(t_2)/t_2)$, so we have established \eqref{TOSHOW}. Moreover,} it is easy to see that, as before, $t_2$ and $x/t_2$ go to infinity as $x\to\infty$. Thus,
$$k\left(\frac{x}{t_2}\right)=\frac{f(t_2)}{t_2}\sim \frac1r f'(t_2).$$
{Hence,}
$$x\sim \frac{t_2 g(t_2)}{r^{1/\alpha}}.$$
where $g=k^{-1}\circ f^\prime\in\mathcal{R}_{(r-1)/\alpha}$. {In this way we have established \eqref{TOSHOW}.}

On the other hand, in the case of independent $M$ and $Q$, \eqref{ggg} implies that 
\begin{align}\label{og}
f^\prime(t_1)\sim (\alpha-1) k\left(\frac{x}{t_1}\right)
\end{align}
and so
$$x\sim \frac{t_1 g(t_1)}{(\alpha-1)^{1/\alpha}}.$$
Thus{, $r^{-1/\alpha} t_2 g(t_2)\sim (\alpha-1)^{-1/\alpha} t_1 g(t_1)$, $t\mapsto t g(t)\in\mathcal{R}_{(r+\alpha-1)/\alpha}$ and so by Lemma 2.1 in \cite{BK17} we obtain}
$$t_2\sim t_1\left(\frac{r}{\alpha-1}\right)^{1/(\alpha+r-1)}.$$
Finally
$$h_{co}(x)= f(t_2)\sim \left(\frac{r}{\alpha-1}\right)^{r/(\alpha+r-1)} f(t_1)\sim \frac{\alpha-1}{\alpha+r-1}\left(\frac{r}{\alpha-1}\right)^{r/(\alpha+r-1)}h_{ind}(x),$$
since
$$h_{ind}(x)=f(t_1)+t_1 k(x/t_1)\sim f(t_1)+(\alpha-1)^{-1}t_1 f^\prime(t_1)\sim \left(1+\frac{r}{\alpha-1}\right) f(t_1).$$

\item[e)]
The infimum in the definition of $h_{counter}(x)$ is calculated for $t>0$ such that
\begin{align}\label{contr}
\P\left(M>1-\frac{1}{t}\right)+\P\left(Q>\frac{x}{t}\right)>1.
\end{align}
We will show that, as $x\to\infty$, the infimum is actually calculated for 
$$t\in I_x:=[1,(1-m_-)^{-1}]\cup[x/q_-,\infty).$$ Take $t\in[1,\infty)\setminus I_x$. Then, there exist $m\in(m_-,m_+)$ and $q>q_-$  with $t\in(1/(1-m),x/q)$. 
We have $\P(M>m)<1$ and $\P(Q>q)<1$.
Consider first the case of $t\in\left(1/(1-m),\sqrt{x}\right]$. Then
$$\P\left(M>1-\frac{1}{t}\right)+\P\left(Q>\frac{x}{t}\right)\leq\P\left(M>m\right)+\P\left(Q>\sqrt{x}\right)$$
and we obtain a contradiction with \eqref{contr} as $x\to\infty$.
Similarly, if $t\in[\sqrt{x},x/q)$, then we obtain
$$\P\left(M>1-\frac{1}{\sqrt{x}}\right)+\P\left(Q>q\right)>1$$
and this yields a contradiction as well if $x\to\infty$. 

So far, we have shown that
\begin{align*}
h_{counter}(x)& \sim \inf_{t\in I_x}\left\{ -t\log\left[\P\left(M>1-\frac1t\right)+\P\left(Q>\frac xt\right)-1\right]\right\}\\
&= \min\left\{ \inf_{t\in[1,1/(1-m_-)]}\left\{ \cdots \right\}, \inf_{t\geq x/q_-}\left\{ \cdots \right\} \right\}.
\end{align*}
If $\P(M=m_-)=0=\P(Q=q_-)$, this is exactly \eqref{hcount} since ($f$ and $k$ are right-continuous and non-decreasing)
\begin{align*}
\inf_{t\geq x/q_-}\left\{ \cdots \right\}=\inf_{t\geq x/q_-} f(t)=f(\frac{x}{q_-})
\intertext{and similarly}
\inf_{t\in[1,1/(1-m_-)]}\left\{ \cdots \right\} =\inf_{t\leq1/(1-m_-)} t k\left(\frac xt\right)=\frac{k((1-m_-)x)}{1-m_-}.
\end{align*}
On the other hand, if $\P(Q=q_-)>0$, then by \eqref{contr}, we see that $t=x/q_-$ is impossible as $x\to\infty$ and thus the infimum is calculated for $t\in(x/q_-,\infty)$. However, this introduces virtually no changes {to the proof} since $\inf_{1\leq t<x/q_-} f(t)\sim f(x/q_-)$.
If $\P(M=m_-)>0$, then $t=1/(1-m_-)$ is impossible and we eventually obtain \eqref{hcount}.
\end{itemize}
\end{proof}

\begin{proof}[Proof of Lemma~\ref{lem2}]
	Assume that $I_{\phi}(z)\leq 1$ for $z> N$ and define 
	$$\overline{\phi}(x)=\begin{cases} ax & x\leq N \\ \phi(x)+C, & x> N.   \end{cases}$$
	We will show that for sufficiently large $a$ and $C$, 
	\begin{align}\label{ovphi}
	I_{\overline{\phi}}(z)\leq 1\qquad\mbox{ for all }z\geq 0.
	\end{align}
	
	Observe that $I_{\overline{\phi}}(0)=1$. If $a>\E Q(1-\E M)^{-1}$, then $I_{\overline{\phi}}^\prime(0)=\E Q-a+a\E M<0$, thus, there exists $\varepsilon>0$ such that $I_{\overline{\phi}}(z)\leq 1$ for $z\in[0,\varepsilon)$. 
	For $z\in[\varepsilon,N]$ we have
	$$I_{\overline{\phi}}(z)\leq \E e^{N Q-a \varepsilon(1-M)}$$
	and the right hand side tends to $0$ as $a\to\infty$. Thus, for sufficiently large $a$ we also have $I_{\overline{\phi}}(z)\leq 1$ for $z\in[\varepsilon,N]$.
	Further, for $z>N$, we have
	$$I_{\overline{\phi}}(z)=I_{\phi}(z)+\E e^{z Q}\left(e^{a z M-\phi(z)-C}-e^{\phi(zM)-\phi(z)}\right) I_{zM\leq N}$$
	and one may find $C$ such that $ax-C\leq \phi(x)$ for any $x\in[0,N]$, so the second term above is non-positive. 
	
	Further, proceeding by induction, assume that $\E \exp(z R_n){\leq} \exp(\overline{\phi}(z))$ for all $z\geq 0$ and some $n\in\mathbb{N}$.
	Then, for $z\geq0$,
	$$\E \exp(z R_{n+1})=\E \exp(z M_{n+1}R_{n}+z Q_{n+1})\leq \E \exp(\overline{\phi}(zM)+z Q)\leq \exp(\overline{\phi}(z))$$
	by \eqref{ovphi}.
	Moreover, we can start the induction since $R_0$ can be chosen arbitrary and thus, passing to the limit as $n\to\infty$, we obtain the assertion.
\end{proof}

\section*{Acknowledgements} The author would like to express his sincere gratitude to the referees for their useful comments. The author thanks to prof. Iksanov for his stimulating remarks that led to an improvement of the work. The author was partially supported by the NCN Grant UMO-2015/19/D/ST1/03107. 

\bibliographystyle{plainnat}

\bibliography{BiblClean}

\end{document}